\DeclareMathOperator{\Tr}{Tr}
\DeclareMathOperator{\Ric}{Ric}
\begin{document}

\newtheorem{theorem}{Theorem}
\newtheorem*{theorem*}{Theorem}
\newtheorem{conjecture}[theorem]{Conjecture}
\newtheorem*{conjecture*}{Conjecture}
\newtheorem{proposition}[theorem]{Proposition}
\newtheorem*{proposition*}{Proposition}
\newtheorem{question}{Question}
\newtheorem{lemma}[theorem]{Lemma}
\newtheorem*{lemma*}{Lemma}
\newtheorem{cor}[theorem]{Corollary}
\newtheorem*{obs*}{Observation}
\newtheorem{obs}{Observation}
\newtheorem{condition}{Condition}
\newtheorem{definition}{Definition}
\newtheorem*{definition*}{Definition}
\newtheorem{proc}[theorem]{Procedure}
\newcommand{\comments}[1]{} 
\def\Z{\mathbb Z}
\def\Za{\mathbb Z^\ast}
\def\Fq{{\mathbb F}_q}
\def\R{\mathbb R}
\def\N{\mathbb N}
\def\C{\mathbb C}
\def\k{\kappa}
\def\grad{\nabla}
\def\M{\mathcal{M}}
\def\S{\mathcal{S}}
\def\pt{\partial}

\newcommand{\todo}[1]{\textbf{\textcolor{red}{[To Do: #1]}}}

\title[Hall of Statistical Mirrors]{A Hall of Statistical Mirrors}
 \author[Iowa State University]{Gabriel Khan} 
 \author[University of Michigan]{Jun Zhang} 
 
\email{gkhan@iastate.edu}
\email{junz@umich.edu}

\date{\today}

\maketitle 

\begin{abstract}
    The primary objects of study in information geometry are \emph{statistical manifolds}, which are parametrized families of probability measures, induced with the Fisher-Rao metric and a pair of torsion-free conjugate connections. In recent work \cite{zhang2020statistical}, the authors considered parametrized probability distributions as \emph{partially-flat statistical manifolds admitting torsion} and showed that there is a complex to symplectic duality on the tangent bundles of such manifolds, based on the dualistic geometry of the underlying manifold.
    
     In this paper, we explore this correspondence further in the context of Hessian manifolds, in which case the conjugate connections are both curvature- and torsion-free, and the associated dual pair of spaces are K\"ahler manifolds. We focus on several key examples and their geometric features. In particular, we show that the moduli space of univariate normal distributions gives rise to a correspondence between a Siegel domain and the Siegel-Jacobi space, which are spaces that appear in the context of automorphic forms.
\end{abstract}



\section{Introduction}

In complex and symplectic geometry, mirror symmetry is a duality between Calabi-Yau manifolds, in which two distinct manifolds have closely related geometry. This correspondence has played an important role in string theory as well enumerative algebraic geometry (see \cite{cox1999mirror,morrison1997making} for more details).
Mirror symmetry provides a correspondence between the geometry of two different Calabi-Yau manifolds $\mathbb{M}$ and $\mathbb{W}$. Such manifolds are generally topologically and geometrically distinct, so this correspondence is not induced by a mapping between the manifolds. Instead, it is more appropriate to think that the spaces appear in pairs where it is possible to understand some aspects of the complex geometry of the primal space in terms of the symplectic geometry of the dual manifold.

At present, mirror symmetry is a somewhat mysterious phenomena; given a Calabi-Yau manifold it is not clear how to construct its mirror. However, it seems to be deeply linked with T-duality,\footnote{The notion of T-duality appears both in mathematics as well as physics, where two seemingly different physical systems turn out to be equivalent. In this paper, we adopt the mathematical definition of T-duality given in \cite{leung2000mirror,fei2019anomaly}.} which is a duality between semi-flat K\"ahler manifolds induced by inverting the length-scale of the fibers. In particular, the SYZ conjecture states that away from a singular locus, the mirror manifold can be constructed using T-duality \cite{strominger1996mirror}.

In this paper, we study a related duality in the context of the tangent bundles of statistical manifolds.\footnote{Strictly speaking, non-K\"ahler statistical mirror symmetry is defined for partially-flat statistical manifolds admitting torsion, rather than traditional statistical manifolds, which are torsion-free (see Section \ref{nonK_mirror} for details).} Due to the similarities to the semi-flat case of Calabi-Yau manifolds, we call this correspondence \textit{statistical mirror symmetry} \cite{zhang2020statistical}. 
From a topological and symplectic perspective, this correspondence is often very simple. However, from the perspective of complex geometry, rich phenomena arise.

The goal of this paper is to explore the geometric properties of statistical mirror symmetry in more detail. We will focus primarily on examples whose underlying Riemannian manifold is hyperbolic and where the mirror pair are K\"ahler manifolds, and use these examples to motivate the more general results.

\subsection{Overview of the paper}

In Section \ref{Normaldistributions}, we discuss the moduli space of normal distributions (i.e., Gaussian distributions) and use this example to introduce statistical mirror symmetry. This induces a duality between a Siegel domain with its metric of constant holomorphic sectional curvature and the Siegel-Jacobi space with a $Sp(2,\mathbb{R})$-invariant metric.
We study the geometry of these two spaces in depth, and use them to highlight several important geometric features that differ from Calabi-Yau manifolds. Furthermore, we show that both of these spaces are K\"ahler-Ricci solitons which remain coupled for all time under K\"ahler-Ricci flow. Using this example as a guide, we find two curvature properties which are preserved by conjugate flow (i.e., the dual to K\"ahler-Ricci flow). Finally, we make some speculative remarks the relationship between this duality and the automorphic forms on these two spaces. In particular, we propose that this perspective may be used to explain the so-called Saito-Kurokawa lift, which takes elliptic modular forms to Siegel modular forms of degree 2.

In Section \ref{Construction}, we provide a more general definition for statistical mirror symmetry. In this section, we describe how this construction is related to the duality of semi-flat Calabi-Yau manifolds.
 We also discuss a non-K\"ahler generalization of this concept, which is defined on the tangent bundle of an affine Riemannian manifold.

In Section \ref{Hyperbolic}, we study other examples of statistical mirror pairs whose underlying statistical manifold has constant negative sectional curvature. These examples show several important principles about statistical mirror symmetry. For instance, by explicitly constructing a second example of a mirror pair (induced from the moduli of negative trinomial distributions) where one of the metrics has constant holomorphic sectional curvature, we show that two tube domains can be locally holomorphically isometric, but have non-isometric dual metrics. These examples also show that the dual of a space of constant holomorphic sectional curvature need not even have constant scalar curvature. We will also use this geometric perspective to provide new statistical results. In particular, by studying the moduli of Inverse Gaussian distributions, we compute the isometry group of the parameter space. 

In Section \ref{Flat}, we discuss statistical mirror pairs whose underlying Hessian manifolds are Riemannian flat, which are better known as Frobenius manifolds. These spaces can be used to construct pairs of solutions to the Witten-Dijkgraaf-Verlinde-Verlinde (WDVV) equations \cite{kito1999hessian}, so have applications in theoretic physics. We use our geometric approach to establish several basic properties. For instance, by considering the scalar curvature, we show that it is possible to simplify the WDVV equations to a single equation in dimension two. Using the Ricci curvature, we show that the full system of equations simplifies to six equations in dimension three. We also provide several examples of such metrics.

Finally, in Section \ref{Existence of Hessian metrics}, we discuss the question of whether a given Riemannian metric admits a Hessian structure, from both a global and local perspective. We also discuss the fact that such structures fail to be unique; metrics which admit one Hessian structure generally admit many others as well. 



\section{The univariate normal and its corresponding mirrors}
\label{Normaldistributions}

To introduce statistical mirror symmetry, it is helpful to consider a specific example of the phenomena. In this section, we show that by considering the family of univariate normal distributions, we obtain a correspondence between the Siegel-Jacobi space $\mathbb{C}\times\mathbb{H}$ with an $Sp(2,\mathbb{R})$-invariant metric and a Siegel domain (i.e., the complex ball) with the Bergman metric.  We will use this as an archetypal case which motivates the more general definition in Section \ref{Construction}.
In the interest of clarity, we will not include long derivations of curvature tensors in this paper. Instead, we have written a Mathematica notebook which can be used to calculate all of the relevant quantities \cite{MTWnotebook}.

\subsection{The geometry of the normal family}

A univariate Gaussian distribution is a probability distribution $\rho$ of the form
\[ \rho(\zeta| \mu,\sigma) = \frac{1}{\sqrt{2 \pi \sigma}} \exp \left(-\frac{(\zeta-\mu)^2}{2 \sigma^2} \right). \]
In this expression, $\zeta$ represents the random variable, which takes values in the sample space $\S = \R$. There are two parameters, $\mu$ and $\sigma$ which correspond to the mean and variance of the distribution, respectively. We can consider the space of all univariate normal distributions over the reals as a 
 \textit{parametrized family}, which is a family of probability distributions specified by some number of parameters (in this case, $\mu$ and $\sigma$). Furthermore, we can consider the space of all univariate normal distributions as a \textit{statistical manifold}, where the parameters serve as a global coordinate chart. This gives this space the structure of a smooth manifold.\footnote{In fact, it is a domain in Euclidean space, but it is beneficial to consider it as a manifold.}

For any parametrized family of probability distributions, it is possible to define an associated Riemannian metric, which is known as the \textit{Fisher metric} (or \textit{Fisher-Rao metric}. Denoting the parameters of the family of probability density functions as $ \{ x \}_{i=1}^n$, the Fisher metric is given by the expression\footnote{Here, the reference measure $d\zeta$ is the usual Lebesgue measure. } 
\[g_{jk}(x) =
\int_{\S}
 \frac{\partial \log p(\zeta|x)}{\partial x^j}
 \frac{\partial \log p(\zeta|x)}{\partial x^k}
 p(\zeta|x) \, d\zeta. \]

This expression originates in statistics, where it is also known as the {\it Fisher information}. It can be interpreted as the infinitesimal form of the relative entropy, which provides a non-symmetric notion of the distance (also known as a \textit{divergence}) between two mutually absolutely continuous probability measures. The Fisher metric has found many uses in physics and other areas of science, but a full discussion of its properties would take us too far from our main focus, so we refer the interested reader to \cite{amari2000methods} \cite{costa2015fisher}.

For the space of normal distributions (which we denote $\M_{\tt Normal}$), it is possible to compute the Fisher metric explicitly in the $(\mu,\sigma)$-coordinates. Doing so, we find that
\[g= \frac{1}{\sigma^2} (d\mu^2+2 d\sigma^2).  \]
This computation shows that the moduli of normal distributions is a hyperbolic space.

\begin{proposition}[Amari \cite{amari1980theory}]
The statistical manifold of normal distributions $\M_{\tt Normal}$ is a half-plane with constant negative curvature.
\end{proposition}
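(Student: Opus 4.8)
The plan is to take the explicit Fisher metric $g = \sigma^{-2}(d\mu^2 + 2\,d\sigma^2)$ on its parameter domain and show directly that its Gaussian curvature is a negative constant. The domain is already a half-plane: the parameters range over $\{(\mu,\sigma) : \mu \in \R,\ \sigma > 0\}$, so the substance of the statement is the constancy and negativity of the curvature rather than the shape of the underlying set.

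The quickest route is a change of variable that identifies $g$, up to a constant factor, with the standard Poincar\'e model. Setting $\nu = \sqrt{2}\,\sigma$, one has $d\nu^2 = 2\,d\sigma^2$ and $\sigma^2 = \nu^2/2$, so the metric becomes
\[
g = \frac{2}{\nu^2}\bigl(d\mu^2 + d\nu^2\bigr),
\]
which is exactly twice the hyperbolic metric on the upper half-plane $\{(\mu,\nu): \nu > 0\}$. Since the Poincar\'e metric has constant curvature $-1$, and rescaling a metric by a positive constant $c$ rescales sectional curvature by $c^{-1}$, I would conclude that $g$ has constant curvature $-\tfrac{1}{2}$.

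Alternatively, for a reader who prefers to avoid the change of variables, I would apply the orthogonal-coordinate (Brioschi) formula for Gaussian curvature directly. The coordinates $(\mu,\sigma)$ are orthogonal, with $E = \sigma^{-2}$, $G = 2\sigma^{-2}$, and $F = 0$; the conformal factor depends only on $\sigma$, which collapses all $\mu$-derivatives and reduces the computation to a short one-variable calculation again yielding $K = -\tfrac{1}{2}$ at every point.

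There is no real obstacle here: this is a routine two-dimensional curvature computation. The only point requiring care is confirming that the curvature is genuinely constant, not merely negative, and both approaches deliver this — the first because constancy is inherited from the standard model under a constant rescaling, and the second because the $\mu$-independence of the conformal factor forces the curvature formula to evaluate to a fixed number. The specific value $-\tfrac{1}{2}$, rather than $-1$, simply reflects the factor of $2$ in front of $d\sigma^2$ in the Fisher metric.
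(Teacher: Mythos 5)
Your proposal is correct and matches the paper's (implicit) argument: the paper simply exhibits the Fisher metric $g=\sigma^{-2}(d\mu^2+2\,d\sigma^2)$ and observes that it is a constant multiple of the Poincar\'e half-plane metric, which is exactly what your substitution $\nu=\sqrt{2}\,\sigma$ makes precise, yielding constant curvature $-\tfrac{1}{2}$. The rescaling argument and the value $-\tfrac{1}{2}$ are both right, so there is nothing to add.
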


\subsection{Normal distributions as an exponential family}

In order to construct a pair of mirror K\"ahler manifolds from the space of normal distributions, we first must reparametrize $\M_{\tt Normal}$ as an \textit{exponential family}.

\begin{definition}[Exponential family]
Given a sample space $\S$ with random variable $\zeta$, an exponential family is a parametrized family of probability distributions whose probability density/mass functions are of the form
\begin{equation} \label{Expfamily}
\rho (\zeta \,| \, x) = h(\zeta) \exp\left( \sum_{i=1}^n x^i F_i(\zeta) - \Phi(x) \right).
\end{equation}
Here $h:\S \to \mathbb{R}$ is a known function which serves to fix a base measure on $S$. The parameters are denoted by the $x = (x^1, \cdots, x^n ) $ and take values in some domain $ \Omega \subset \mathbb{R}^n$. When an exponential family is parametrized in this way, the $x^i$'s are known as the \emph{natural parameters}. The functions $F=(F_1, \cdots, F_n): \S \to \mathbb{R}^n$ are known as the \emph{sufficient statistics}. Finally, the function $\Phi: \Omega \to \mathbb{R}$ is known as the \emph{log-partition function}, which serves to renormalize the distribution so that the total mass is one.
\end{definition}


In the context of normal distributions, 
we want to find functions $x^1$ and $x^2$ which are functions of $\mu$ and $\sigma$ so that the probability density function takes the form \ref{Expfamily}. To this end, we set $F = (F_1, F_2)$ to be \begin{equation}
    F_1(\zeta)=\zeta, \,\, F_2(\zeta)=\zeta^2 
\end{equation}  
and
\begin{equation} \label{Transition map from natural to mu-sigma}
x^1 = \frac{\mu}{\sigma^2} \,\, \textrm{ and } \,\, x^2 = -\frac{1}{2\sigma^2}. \end{equation}
These functions are defined on the domain 
$ \Omega = \{(x^1,x^2) \in \R^2 ~|~ x^2< 0\}$.

Finally, the function $\Phi(x)$ is the following:
\[  \Phi= -\frac{x^1 \cdot x^1}{4 x^2} - \frac{1}{2}\log\left( - \frac{x^2}{\pi} \right). \]


Exponential families play an essential role throughout this paper, so we mention some general properties about them now.

\begin{obs*}[Important facts about exponential families, part I] 
\label{important facts about exp families 1}
For any exponential family, the natural parameters have the following properties.
\begin{enumerate}
    \item The domain $\Omega$ of the natural parameters is a convex subset of $\mathbb{R}^n$.
    \item When parametrized in terms of the natural parameters $x$, the Fisher metric of an exponential family is given by the Hessian of the log-partition function $\Phi$. That is to say, we have
\[g_{ij} = \frac{\partial^2}{\partial x^i \partial x^j} \Phi. \] 
\end{enumerate}

\end{obs*}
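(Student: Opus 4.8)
The plan is to treat the two claims separately, since they draw on different ideas. For part (1), the relevant domain is the natural parameter space
\[
\Omega = \Big\{ x \in \R^n ~\Big|~ \int_{\S} h(\zeta)\, \exp\Big(\textstyle\sum_i x^i F_i(\zeta)\Big)\, d\zeta < \infty \Big\},
\]
on which $\Phi(x) = \log \int_{\S} h(\zeta)\exp(\sum_i x^i F_i(\zeta))\, d\zeta$ is the normalizing exponent appearing in \eqref{Expfamily}. I would deduce convexity of $\Omega$ from H\"older's inequality. For part (2), I would compute the Fisher metric directly from its integral definition and identify the result with the Hessian of $\Phi$.

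For convexity, fix $x,y \in \Omega$ and $\lambda \in (0,1)$. Since $h \geq 0$, I would split the base density as $h = h^{\lambda}\, h^{1-\lambda}$ and the exponential factor correspondingly, then apply H\"older's inequality with conjugate exponents $p = 1/\lambda$ and $q = 1/(1-\lambda)$. This bounds the integral defining the normalization at $\lambda x + (1-\lambda) y$ by the product
\[
\Big( \textstyle\int_{\S} h\, e^{\sum_i x^i F_i}\, d\zeta \Big)^{\lambda}\, \Big( \int_{\S} h\, e^{\sum_i y^i F_i}\, d\zeta \Big)^{1-\lambda},
\]
which is finite because $x,y \in \Omega$. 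Hence $\lambda x + (1-\lambda)y \in \Omega$, giving convexity; taking logarithms moreover yields convexity of $\Phi$ itself, which I expect to reuse below.

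For the metric, note that for an exponential family $\log \rho = \log h(\zeta) + \sum_i x^i F_i(\zeta) - \Phi(x)$, so $\partial_j \log\rho = F_j - \partial_j \Phi$. I would first establish that $\partial_j \Phi = E_x[F_j]$ by differentiating the log-integral expression for $\Phi$; this identifies the score $\partial_j \log \rho = F_j - E_x[F_j]$ with the centered sufficient statistic. Substituting into the defining integral for the Fisher metric then gives $g_{jk} = E_x\big[(F_j - E_x[F_j])(F_k - E_x[F_k])\big] = \mathrm{Cov}_x(F_j, F_k)$. Differentiating once more, $\partial_j\partial_k \Phi = \partial_k E_x[F_j] = \int_{\S} F_j\,(F_k - E_x[F_k])\, \rho\, d\zeta = \mathrm{Cov}_x(F_j, F_k)$, which matches $g_{jk}$ exactly and proves the claim.

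The main obstacle is purely analytic: every step in part (2) requires differentiating under the integral sign. The justification is that on the interior of $\Omega$ the integrals converge locally uniformly (in fact $\Phi$ extends to a real-analytic, indeed holomorphic, function there), so dominated convergence applies. Making this rigorous means producing integrable dominating functions on a neighborhood of each interior point, and here I would use the convexity established in part (1) to trap the point inside a small simplex of parameters, all lying in $\Omega$, on which uniform exponential bounds hold. This is the one place where the two parts of the observation genuinely interact.
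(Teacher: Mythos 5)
Your proposal is correct, and it is the standard argument: H\"older's inequality with exponents $1/\lambda$ and $1/(1-\lambda)$ gives convexity of $\Omega$ (and log-convexity of the partition function), while the identities $\partial_j \Phi = E_x[F_j]$ and $g_{jk} = \mathrm{Cov}_x(F_j,F_k) = \partial_j\partial_k\Phi$ give part (2). The paper itself states this observation without proof, treating it as a classical fact from information geometry (it is essentially the content of the standard references on exponential families, e.g.\ Amari), so there is no argument in the paper to compare against; your write-up, including the remark that differentiation under the integral sign is justified on the interior of $\Omega$ by local uniform convergence, is exactly the proof one would supply.
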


Specializing to $\M_{\tt Normal}$, this means that when we use the $x$-coordinates, the Fisher metric is given by
\begin{eqnarray*}
g  & = &
\frac{\partial^2}{\partial x^i \partial x^j} \left( \frac{-x^1 \cdot x^1}{4 x^2} - \frac{1}{2}\log\left( - \frac{x^2}{\pi} \right) \right) \\
& = & \frac{1}{2 x^2} \left[
\begin{array}{cc}
-1  & \frac{x^1}{x^2}   \\
\frac{x^1}{ x^2} & \frac{-x^1\cdot x^1+x^2}{ x^2 \cdot x^2} \\
\end{array}
\right].
\end{eqnarray*}

In some sense, the natural parameters are ``preferred coordinates" for an exponential family,\footnote{For geometers, the notion of preferred coordinates might be somewhat anathema. Conceptually, one can rephrase this definition in terms of a flat affine connection, in which case these coordinates are those for which the Christoffel symbols vanish.} and give the manifold $\M_{\tt Normal}$ the structure of a \textit{Hessian manifold}.\footnote{We will later give another definition for Hessian manifolds that does not use coordinates (Definition \ref{Hessian manifold connection definition}).}

\begin{definition} \label{Hessian manifold-coordinate definition}
A Riemannian manifold $(\M, g)$ is a Hessian manifold if
\begin{enumerate}
    \item $\Omega$ admits an atlas of coordinate charts, such that
    \item the transition maps between charts in this atlas are affine functions, and
    \item in each coordinate chart, there is a potential function $\Phi: \Omega \to \mathbb{R}$, such that
    \[ g = \frac{\partial^2}{\partial x^i \partial x^j} \Phi. \]
\end{enumerate}
\end{definition}

For reasons that will later become apparent, we will denote Hessian manifolds using the notation $(\M,g,D)$, where $D$ indicates the affine structure.\footnote{More precisely, $D$ is a flat affine connection which satisfies a compatibility condition with $g$ known as \emph{Codazzi coupling}.}

\subsubsection{The dual parametrization}


For an exponential family, the natural parameters provide an important coordinate system. There is also a dual set of coordinates, called the \textit{expectation parameters}, which are induced by the sufficient statistics. 

\begin{obs*}[Important facts about exponential families, part II]
\begin{enumerate}
\item[]
    \item Given an exponential family $\rho( \zeta|x)$, the expected value of the sufficient statistics 
    \[ u_i = \int_{\S} F_i(\zeta) \rho ( \zeta|x) \, d\zeta \] also form a coordinate chart for the exponential family.
    \item The domain $\Omega^\ast$ of the expectation parameters $u$ is a convex subset of $\mathbb{R}^n$.
    \item If we use these expected values as coordinates, the Fisher metric is given by the Hessian of the Legendre dual $\Phi^\ast$ of the log-partition function $\Phi$. In other words, we have
\[g\left( \frac{\partial}{\partial u_i}, \frac{\partial}{\partial u_j} \right)  = \frac{\partial^2}{\partial u^i \partial u^j} \Phi^\ast, \]
where
\[ \Phi^\ast = \sup_{x \in \Omega} \langle x,u \rangle - \Phi(x). \]

\item The coordinate systems $x$ and $u$ are bi-orthogonal, which means they satisfy the identity
\[ g \left( \frac{\partial}{\partial x^i}, \frac{\partial}{\partial u_j} \right) = \delta^i_{j}. \]
\end{enumerate}
\end{obs*}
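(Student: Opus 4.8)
The plan is to reduce all four statements to a single master identity, namely that the expectation parameters are the gradient of the log-partition function, $u_i = \partial \Phi / \partial x^i$. Once this is established, parts (1), (3) and (4) become short computations using the chain rule and the Legendre duality between $\Phi$ and $\Phi^\ast$, while part (2) follows from convex analysis.

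First I would establish the master identity. Starting from the normalization $\int_{\S} \rho(\zeta|x)\, d\zeta = 1$ and the form \eqref{Expfamily}, one gets $e^{\Phi(x)} = \int_{\S} h(\zeta)\exp\left(\sum_i x^i F_i(\zeta)\right) d\zeta$. Differentiating under the integral sign, which is justified on the open domain $\Omega$ where the integral converges, yields $\partial \Phi/\partial x^i = \int_{\S} F_i(\zeta)\rho(\zeta|x)\, d\zeta = u_i$. Differentiating once more and using $\partial \rho/\partial x^j = (F_j - u_j)\rho$ gives $\partial^2 \Phi/\partial x^i \partial x^j = \mathrm{Cov}(F_i, F_j) = g_{ij}$, which recovers part I and exhibits the Hessian of $\Phi$ as the covariance matrix of the sufficient statistics. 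Assuming the $F_i$ are affinely independent (no nontrivial linear combination is almost surely constant), this matrix is positive definite, so $\Phi$ is strictly convex and the gradient map $x \mapsto u = \grad\Phi(x)$ has everywhere-invertible Jacobian $(g_{ij})$.

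For part (1), strict convexity makes $\grad\Phi$ injective, while the nonsingular Jacobian makes it a local diffeomorphism by the inverse function theorem; together these show $x \mapsto u$ is a diffeomorphism of $\Omega$ onto its image, so the $u_i$ are a global chart. For part (2), I would identify $\Omega^\ast = \grad\Phi(\Omega)$ with the interior of the effective domain of the convex conjugate $\Phi^\ast$, which is convex since the domain of any convex function is convex. For parts (3) and (4), I would use Legendre duality: from $\Phi^\ast(u) = \langle x, u\rangle - \Phi(x)$ with $x = \grad\Phi^\ast(u)$ one obtains $\partial \Phi^\ast/\partial u_i = x^i$, hence $\partial^2 \Phi^\ast/\partial u_i \partial u_j = \partial x^i/\partial u_j = g^{ij}$, the inverse Fisher matrix. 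Transforming the metric via $\partial/\partial u_j = \sum_k g^{kj}\, \partial/\partial x^k$ then gives $g\left(\partial/\partial u_i, \partial/\partial u_j\right) = \sum_{k,l} g^{ki} g^{lj} g_{kl} = g^{ij} = \partial^2 \Phi^\ast/\partial u_i \partial u_j$, which is part (3), and $g\left(\partial/\partial x^i, \partial/\partial u_j\right) = \sum_k g^{kj} g_{ik} = \delta^i_j$, which is part (4).

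The metric transformations are routine; the genuine subtlety, and the step I expect to be the main obstacle, is part (2). Concluding that $\grad\Phi$ maps $\Omega$ onto the \emph{entire} interior of $\mathrm{dom}\,\Phi^\ast$, rather than merely onto some convex subset of it, requires that $\Phi$ be a convex function of Legendre type (essentially smooth and essentially strictly convex), which rests on a steepness hypothesis on the exponential family that I would need either to verify or to assume. The differentiation-under-the-integral step also demands care regarding the integrability of $F_i \rho$, but this is standard once one restricts to the interior of the natural parameter domain, where the integrals are analytic in $x$.
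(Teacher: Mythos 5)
The paper offers no proof of this Observation at all: it is stated as a collection of standard facts about exponential families (the sort of material covered in the references on information geometry cited earlier, e.g.\ Amari--Nagaoka), so there is no argument in the paper to compare yours against. Your proof is correct and is the standard one: deriving $u_i = \partial\Phi/\partial x^i$ and $g_{ij} = \partial^2\Phi/\partial x^i\partial x^j = \mathrm{Cov}(F_i,F_j)$ by differentiating the normalization identity, then obtaining (1), (3) and (4) from strict convexity, Legendre duality, and the change-of-basis computation $\partial/\partial u_j = \sum_k g^{kj}\,\partial/\partial x^k$. Your identification of the genuine subtlety is also accurate: for (2), the assertion that $\Omega^\ast = \grad\Phi(\Omega)$ is convex (indeed, equal to the interior of $\mathrm{dom}\,\Phi^\ast$) is not automatic for an arbitrary smooth strictly convex $\Phi$ and requires the family to be steep, equivalently $\Phi$ to be of Legendre type; for \emph{regular} exponential families (open natural parameter space $\Omega$), steepness holds automatically by a theorem of Barndorff-Nielsen, which is the setting the paper implicitly works in, and which covers all the examples treated (normal, negative trinomial, inverse Gaussian, multinomial). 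The only other hypothesis worth stating explicitly, which you do, is minimality of the sufficient statistics, without which the Hessian of $\Phi$ is merely positive semidefinite and the $u_i$ fail to be a chart.
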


To construct the expectation parameters for normal distributions, we start by considering the sufficient statistics $F(\zeta) = (\zeta, \zeta^2)$. To translate these quantities into coordinates $u = (u_1, u_2)$ for the statistical manifold $\M_{\tt Normal}$, we compute the expected value of $F$ for a given normal distribution $\rho (\zeta \,| \, x)$:
\[
u_i = \int \rho (\zeta \,| \, x) F_i (\zeta) d\zeta.
\]

Computing these explicitly in terms of the mean and variance, we find that \[ u_1 = \mu \qquad u_2 = \mu^2 + \sigma^2. \]

As such, each normal distribution corresponds to a unique point in $u$-space, which means that we can use the expectation parameters as coordinates for the statistical manifold. The $u$ coordinates are defined on the set $\Omega^\ast = \{(u_1,u_2) \in \R^2  ~|~u_1\cdot u_1 - u_2 <0 \}$.
The Legendre dual of the log-partition function $\Phi$ is 
\[ \Phi^\ast = -\frac{1}{2}-\frac{1}{2}\log( u_2 -u_1\cdot u_1). \]
As such, in the $u$-coordinates, the Fisher metric is given by
\[ g = \frac{1}{(u^1 \cdot u^1 - u^2)^2} \left[
\begin{array}{cc}
u^1 \cdot u^1 + u^2  & - u^1  \\
-u^1 & \frac{1}{2} \\
\end{array}
\right]. \]

\subsubsection{The symmetries of the normal manifold}

Before constructing a statistical mirror pair from the moduli space of normal distributions, let us discuss the isometry group of the $\M_{\tt Normal}$ and its statistical meaning. It is a classic fact in hyperbolic geometry that the symmetry group of the half-plane is the projective special linear group $PSL(2, \mathbb{R})$, which acts by M\"{o}bius transformations.
\begin{equation} \label{Mobius transformation}
    z \mapsto \frac{a z + b}{c z + d}, 
\end{equation}
where $a,b,c,d \in \mathbb{R}$ with $ad-bc=1$. 

In the context of normal distributions, there is a distinguished subgroup induced by affine transformations of the underlying sample space $\mathbb{R}$.
 More precisely, if we consider an affine map of the random variable $\zeta$ defined on $\mathbb{R}$ (i.e. $\zeta \mapsto a \zeta +b$), then under push-forward of measures, this induces a self-map of $\M_{\tt Normal}$. In fact, we can write out the embedding of the affine group into $PSL(2,\mathbb{R})$ explicitly as
 \begin{eqnarray*}
 Aff^+ \to PSL(2,\mathbb{R}) \\
 a x + b \mapsto  \left[
\begin{array}{cc}
\sqrt{a}  & \frac{b}{\sqrt{a}} \\
 0 &  \frac{1}{\sqrt{a}}  \\
\end{array}
\right] 
 \end{eqnarray*}
 
 
The phenomena that diffeomorphisms of the sample space induce isometries of the moduli space is more general, and holds for any parametrized family of probability distributions. 

\begin{theorem}[Chentsov \cite{cencov2000statistical}] \label{Chentsov's corollary}
The Fisher metric is invariant under the diffeomorphism group of the sample space $\mathcal{S}$.
\end{theorem}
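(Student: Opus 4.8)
The plan is to show that pushing a parametrized family forward by a diffeomorphism of the sample space leaves the integrand of the Fisher metric unchanged, so that the two families share the same metric. Let $\psi: \S \to \S$ be a diffeomorphism, and write $\eta = \psi(\zeta)$ for the new sample-space variable. For each fixed parameter value $x$, the measure $p(\zeta|x)\, d\zeta$ pushes forward to a measure on $\S$ with density $q(\eta|x)$ determined by the change-of-variables relation
\[ q(\psi(\zeta)|x)\,\bigl|\det D\psi(\zeta)\bigr| = p(\zeta|x). \]
This defines a new parametrized family $q(\cdot|x)$ over the same parameter domain $\Omega$, and the content of the theorem is that its Fisher metric $\tilde g$ equals the original Fisher metric $g$.

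The key observation I would isolate first is that the Jacobian factor $\bigl|\det D\psi(\zeta)\bigr|$ depends only on the sample-space point $\zeta$ and not on the parameter $x$. Taking logarithms in the relation above and differentiating with respect to $x^j$, the Jacobian term is annihilated, so the \emph{score functions} transform covariantly,
\[ \frac{\partial \log q(\eta|x)}{\partial x^j}\bigg|_{\eta = \psi(\zeta)} = \frac{\partial \log p(\zeta|x)}{\partial x^j}. \]
In words, the derivative of the log-likelihood in a parameter direction is unaffected by a reparametrization of the sample space. Next I would substitute this identity into the definition of $\tilde g$ and change variables back from $\eta$ to $\zeta$: by the very definition of pushforward we have $q(\eta|x)\,d\eta = p(\zeta|x)\,d\zeta$, and each score factor becomes the corresponding $x$-derivative of $\log p$, giving
\[ \tilde g_{jk}(x) = \int_\S \frac{\partial \log q(\eta|x)}{\partial x^j}\,\frac{\partial \log q(\eta|x)}{\partial x^k}\, q(\eta|x)\, d\eta = \int_\S \frac{\partial \log p(\zeta|x)}{\partial x^j}\,\frac{\partial \log p(\zeta|x)}{\partial x^k}\, p(\zeta|x)\, d\zeta = g_{jk}(x), \]
which is exactly the assertion.

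The calculation itself is short, and I do not anticipate a genuine obstacle: the essential phenomenon is that the Fisher metric is assembled entirely from the score, and the score is a derivative in the parameter directions, so any transformation acting only on $\S$ and entering the density through an $x$-independent factor must leave it invariant. The two points that do require care are the mild regularity hypotheses needed to differentiate under the integral sign, and the verification that the Jacobian is parameter-independent — and it is precisely this parameter-independence that encodes the hypothesis that $\psi$ is a diffeomorphism of the sample space rather than a transformation of the parameters. I would state these regularity assumptions once at the outset and otherwise treat the result as the clean structural statement that the Fisher information is a sample-space-reparametrization invariant.
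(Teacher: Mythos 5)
Your argument is correct and complete for the statement as given. Note, however, that the paper does not supply its own proof of this theorem: it is quoted from the literature (Chentsov's monograph, and Corollary~3.6 of Ay et al.\ for the general sample-space case), and the paper's footnote indicates that the intended route is through the deeper invariance of the Fisher metric under \emph{sufficient statistics}, of which a diffeomorphism of $\mathcal{S}$ is the special invertible case. Your direct change-of-variables computation proves exactly the stated corollary and is more elementary and self-contained than that route: the pushforward relation $q(\psi(\zeta)|x)\,\bigl|\det D\psi(\zeta)\bigr| = p(\zeta|x)$ with an $x$-independent Jacobian kills the Jacobian term upon taking $\partial/\partial x^j$ of the logarithm, and the substitution $q(\eta|x)\,d\eta = p(\zeta|x)\,d\zeta$ finishes the identification of the two metrics. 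What the sufficiency-based approach buys in exchange for its extra machinery is strictly more: monotonicity of the Fisher information under arbitrary (non-invertible) statistics and Markov kernels, with equality precisely for sufficient ones, of which your invariance statement is the degenerate equality case. Your closing caveats about differentiating under the integral sign and the parameter-independence of the Jacobian are the right ones to flag; with those regularity hypotheses stated, the proof stands as written.
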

 
 This result was originally proven by Chentsov\footnote{It is worth noting this theorem is actually a corollary of Chentsov's main result, which states that the Fisher metric is invariant under \emph{sufficient statistics}, which are statistics $T(Z)$ (for data $Z$) where the data processing inequality becomes an equality: \[ I\bigl(x ; T(Z)\bigr) = I(x ; Z).\] } when the sample space is finite, but it holds in much greater generality as well (see, e.g., Corollary 3.6 of \cite{ay2015information}). 
In view of this result, we say that an isometry of the Fisher metric is a \textit{Fisher-Chentsov isometry} if it is induced by map of the underlying random variable. It is worth noting that $\M_{\tt Normal}$ also has non-Fisher-Chentsov isometries, which correspond to Mobius transformations where $c$ in Equation \eqref{Mobius transformation} is non-zero.


\subsection{Constructing the statistical mirror pair}
\label{SMS construction}

With these preliminary calculations finished, we can now construct the statistical mirror pair. We have seen that the moduli space of univariate normal distributions is a hyperbolic manifold with dual Hessian structures, one induced by the natural parameters and the other induced by the sufficient statistics. Using these dual Hessian manifolds, we can construct two K\"ahler manifolds $\mathbb{M}_{\tt Normal}$ and $\mathbb{W}_{\tt Normal}$, which are said to be a \textit{statistical mirror pair}.

The primal manifold (denoted $\mathbb{M}_{\tt Normal}$),
is constructed on the tube domain\footnote{Observant readers will notice that this notation is reminiscent of tangent bundles. As we will see in Section \ref{Construction}, this is not coincidental.} $T \Omega \subset \mathbb{C}^2$, which is defined as
\[ T\Omega = \{ x^j + \sqrt{-1} y^j ~| ~ y^j \in \mathbb{R}^n, (x^1,x^2) \in \Omega \}   \subset \mathbb{C}^2. \]
Recall that $\Omega$ is a half space in $\mathbb{R}^2$, so $T\Omega$ is a half space in $\mathbb{C}^2$.
We use the log-partition function $\Phi$ to define a K\"ahler metric on  $\mathbb{M}_{\tt Normal}$. To do so, we define the lift of $\Phi$, denoted $\Phi^h$, as
\[\Phi^h(x+\sqrt{-1}y) = \Phi(x) \]
and consider the K\"ahler metric
\[\omega_{\mathbb{M}_{\tt Normal}}=\frac{\sqrt{-1}}{2} \frac{\partial^2 \Phi^h}{\partial x^i \partial  x^j} dz^i \wedge d \overline{z}^j.\]

In other words, the K\"ahler potential is simply $\Phi$, extended so that it is constant with respect to the imaginary directions $y$. Since $\Phi$ is strongly convex, $\Phi^h$ is strongly pluri-subharmonic, so induces a K\"ahler metric on $\mathbb{M}_{\tt Normal}.$

To build the mirror manifold $\mathbb{W}_{\tt Normal}$, we consider the tube domain over the parabola $\Omega^\ast$, i.e.
$T\Omega^\ast \subset \mathbb{C}^2$ with holomorphic coordinates
$\{w^j=u^j+\sqrt{-1} v^j\}_{j=1}^n$ for $(u^1,u^2) \in \Omega^\ast$, the domain of the dual variable. 
 Its K\"ahler metric is induced by lifting the conjugate potential $\Phi^\ast$ to $T \Omega^\ast$ and defining
\[\omega_{\mathbb{W}_{\tt Normal}}=\frac{\sqrt{-1}}{2} \frac{\partial^2 (\Phi^\ast)^h}{\partial u_i \partial u_j}  dw^i \wedge d \overline{w}^j. \]

We will provide a general definition for statistical mirror symmetry in Section \ref{Construction} (Definition \ref{SMS-abstract}). For now, we simply define the K\"ahler manifolds $\mathbb{M}_{\tt Normal}$ and $\mathbb{W}_{\tt Normal}$ to be a statistical mirror pair.

\subsection{The geometry of $\mathbb{M}_{\tt Normal}$ and $\mathbb{W}_{\tt Normal}$}


The spaces $\mathbb{M}_{\tt Normal}$ and $\mathbb{W}_{\tt Normal}$ have been studied extensively in the literature, and a summary of their geometric properties is provided in Table 1. The space $\mathbb{W}_{\tt Normal}$ is more commonly known as a \emph{Siegel domain of the second type} and has been studied in the context of automorphic forms and Abelian varieties. $\mathbb{M}_{\tt Normal}$ is known as the \emph{ Siegel-Jacobi space}, and has also been studied in the context of automorphic forms.\footnote{The associated K\"ahler metric is occasionally referred to as the K\"ahler-Brandt metric (see, e.g., \cite{molitor2014gaussian}), although we will not use this terminology.} The first observation that we make is that $\mathbb{M}_{\tt Normal}$ and $\mathbb{W}_{\tt Normal}$ are distinct from the viewpoint of complex geometry.

{\renewcommand{\arraystretch}{1.4}
\begin{table}[]
    \centering

\label{Table_normal_mirror}
\begin{tabular}[ht!]{|c | c| c|}
\hline
 & $\mathbb{M}_{\tt Normal}$  &   $\mathbb{W}_{\tt Normal}$   \\
\hline
also known as & the Siegel-Jacobi space & a Siegel domain (m=n=1) \\
\hline
base coordinates &   $ (x^1, x^2) = \left(\frac{\mu}{\sigma^2} , \, -\frac{1}{2 \sigma^2} \right)$  & $(u_1, u_2) = (\mu,\, \mu^2 + \sigma^2) $   \\ 
\hline
base domain & $\Omega = \{ (x^1, x^2) \in \R^2: x^2 \geq 0 \}$ & $\Omega^\ast = \{ (u_1, u_2) \in \R^2:  u_1 \cdot u_1 < u_2 \}$   \\
\hline
holomorphic coordinates &  $\{z^j=x^j+ \sqrt{-1} \, y^j\}_{j=1, 2} \in T\Omega$ &   
$ \{w_j=u_j+ \sqrt{-1} \, v_j\}_{j=1,2} \in T\Omega^\ast$   \\
\hline
biholomorphic to & the half-space in $\mathbb{C}^2$ & the unit ball in $\mathbb{C}^2$ \\
\hline
convex potential & $\Phi(x) = \frac{x^1 \cdot x^1}{4 (-x^2)} -\frac{1}{2} \log \left(  \frac{-x^2}{\pi} \right) $ & $\Phi^\ast(u) = -\frac{1}{2} \log (u_2 - u_1 \cdot u_1) $ \\
\hline
K\"ahler potential & $ \Phi^h(x+\sqrt{-1} \, y) = \Phi(x) $ & $ (\Phi^\ast)^h (u+\sqrt{-1} \, v) = \Phi^\ast(u) $ \\
\hline
K\"ahler metric & $\frac{\sqrt{-1}}{2} \frac{\partial^2 \Phi^h}{\partial x^i \partial  x^j} dz^i \wedge d \overline{z}^j $ & 
$ \frac{\sqrt{-1}}{2} \frac{\partial^2 (\Phi^\ast)^h}{\partial u_i \partial u_j}  dw^i \wedge d \overline{w}^j $  \\
\hline
metric known as & the K\"ahler-Brandt metric &  Bergman metric  \\
\hline
symmetry group & $SL(2,\R) \ltimes \R^2$ &  $PU(2,1)$ \\
\hline
geometric properties & cscK expanding K\"ahler-Ricci soliton & complex hyperbolic space form \\
\hline
encountered in &  automorphic forms & automorphic forms/Abelian varieties \\
\hline
\end{tabular}
\caption{Summary of the mirror pair $\mathbb{M}_{normal}$ and $\mathbb{W}_{normal}$ }
\end{table}
}


\begin{obs*}
$\mathbb{M}_{\tt Normal}$ and $\mathbb{W}_{\tt Normal}$ are \textit{not} biholomorphic.
\end{obs*}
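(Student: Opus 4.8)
The plan is to exhibit a biholomorphic invariant on which the two spaces disagree, and the cleanest such invariant is the presence or absence of entire complex curves. From the tube-domain description, $\mathbb{M}_{\tt Normal}$ is built over $T\Omega$ with $\Omega = \{(x^1,x^2) : x^2 < 0\}$; since $\Omega$ places no constraint on $x^1$ and the imaginary parts $y^1, y^2$ are unconstrained, the holomorphic coordinate $z^1 = x^1 + \sqrt{-1}\, y^1$ ranges over all of $\mathbb{C}$ while $z^2$ ranges over a half-plane $\mathbb{H}$. Thus $\mathbb{M}_{\tt Normal}$ is biholomorphic to $\mathbb{C} \times \mathbb{H}$, and in particular it contains the entire complex line $\mathbb{C} \times \{p\}$ for every fixed $p \in \mathbb{H}$. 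By contrast, the table records that $\mathbb{W}_{\tt Normal}$ is biholomorphic to the unit ball $B^2 \subset \mathbb{C}^2$, which is a bounded domain.

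First I would assume, towards a contradiction, that there is a biholomorphism $F \colon \mathbb{M}_{\tt Normal} \to \mathbb{W}_{\tt Normal}$. Composing with the biholomorphism $\mathbb{W}_{\tt Normal} \cong B^2$ furnished by the table, I obtain a biholomorphism $G \colon \mathbb{C} \times \mathbb{H} \to B^2$. Restricting $G$ to the complex line $\mathbb{C} \times \{p\}$ gives a holomorphic map $g \colon \mathbb{C} \to B^2 \subset \mathbb{C}^2$. Each of the two component functions of $g$ is then a bounded entire function on $\mathbb{C}$, so by Liouville's theorem each is constant; hence $g$ is constant. This contradicts the injectivity of the biholomorphism $G$, and so no such $F$ can exist.

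The content of the argument is entirely the observation that $\mathbb{M}_{\tt Normal}$ is an unbounded domain carrying entire curves whereas $\mathbb{W}_{\tt Normal}$ is (biholomorphic to) a bounded domain; phrased invariantly, $\mathbb{W}_{\tt Normal}$ is Kobayashi hyperbolic while $\mathbb{M}_{\tt Normal}$ is not, and Kobayashi hyperbolicity is a biholomorphic invariant. The only step requiring care is the identification $T\Omega \cong \mathbb{C} \times \mathbb{H}$, i.e. verifying that the base domain $\Omega$ constrains only the $x^2$-coordinate so that the $z^1$-direction is a genuine copy of $\mathbb{C}$; once this is in hand the Liouville estimate is immediate. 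I expect this to be the main, though quite modest, obstacle, since everything else is a direct consequence of the biholomorphism types already tabulated.

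Alternatively, I could distinguish the two spaces through their curvature or automorphism data: $\mathbb{W}_{\tt Normal}$ is a complex hyperbolic space form with automorphism group $PU(2,1)$ (real dimension $8$), whereas the automorphism group of $\mathbb{C} \times \mathbb{H}$ has a different dimension, so the two groups cannot be isomorphic. I would, however, treat this as a secondary check rather than the primary proof, because it requires comparing full biholomorphism groups rather than the isometry groups of the chosen metrics listed in Table 1, and the Liouville argument avoids that subtlety entirely.
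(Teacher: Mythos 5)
Your proof is correct and follows essentially the same route as the paper: both pass to the ball model of $\mathbb{W}_{\tt Normal}$, restrict a hypothetical biholomorphism to an entire complex line inside the tube domain $\mathbb{M}_{\tt Normal}$, and apply Liouville's theorem to the bounded coordinate functions. Your write-up is in fact slightly more careful than the paper's, since you correctly identify $z^1$ (not $z^2$) as the unconstrained coordinate sweeping out a copy of $\mathbb{C}$ over the base $\Omega = \{x^2 < 0\}$.
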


\begin{proof}
It is a classic fact in complex analysis that this Siegel domain (i.e., $\mathbb{W}_{\tt Normal}$) is biholomorphic to the unit ball $\mathbb{B}^2 \subset \mathbb{C}^2$, by the map:
\[ F(w_1,w_2) = \left( \frac{2w_1}{\sqrt{-1} + w_2}, \frac{1+\sqrt{-1} w_2}{\sqrt{-1}+w_2} \right) . \]
For the sake of contradiction, suppose there were a biholomorphism $\phi:\mathbb{M}_{\tt Normal} \to \mathbb{B}^2$. Then the function $z^2 \to \phi(\sqrt{-1}, z^2)$ would be entire on $\mathbb{C}$ and bounded in each of its coordinates, which is impossible.
\end{proof}

Furthermore, $\mathbb{M}_{\tt Normal}$ and $\mathbb{W}_{\tt Normal}$ have different automorphism groups, which shows that the duality between them does not preserve the automorphism group. Among complex domains, the ball has the largest possible automorphism group \cite{isaev2004characterization}, so the automorphism group of $\mathbb{M}_{\tt Normal}$ is smaller. These observations should not be particularly surprising; the Legendre transformation can have a complicated effect on the domain of a convex function, which corresponds to deforming the complex structure of its tube domain\footnote{In coordinate-invariant language, Legendre duality does not preserve the affine structure of an affine manifold.} (see \cite{shimizu2000classification} for a details on the automorphism group of tube domains).

With Calabi-Yau manifolds, mirror symmetry can change the topology (it acts to rotate the Hodge diamond \cite{kontsevich1995homological}), so mirror pairs are generally not diffeomorphic. However, $\mathbb{M}_{\tt Normal}$ and $\mathbb{W}_{\tt Normal}$ are both tube domains over a convex subset of $\mathbb{R}^2$, and thus are diffeomorphic.\footnote{More conceptually, statistical mirror pairs are defined on the tangent bundle $T \mathcal{M}$ of an affine manifold $\mathcal{M}$, so they are always diffeomorphic (see Definition \ref{SMS-abstract}).}

 Finally, we remark that $\mathbb{M}_{\tt Normal}$ and $\mathbb{W}_{\tt Normal}$ are both domains of holomorphy, as they are tube domains whose base is convex \cite{yang1982automorphism}. This fact holds for any statistical mirror pair induced by an exponential family, since the domain of the natural parameters and expectation parameters are always convex.


\begin{obs*}
For any exponential family $ {\tt S}$, the K\"ahler mirror pair $\mathbb{M}_{\tt S}$ and $\mathbb{W}_{\tt S}$ are domains of holomorphy.
\end{obs*}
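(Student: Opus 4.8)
The plan is to reduce the statement to the classical convexity criterion for tube domains and then invoke the two convexity facts already recorded for exponential families. Recall from the construction that $\mathbb{M}_{\tt S}$ is the tube domain $T\Omega$ over the domain of natural parameters, while $\mathbb{W}_{\tt S}$ is the tube domain $T\Omega^\ast$ over the domain of expectation parameters. Thus it suffices to prove the general fact that a tube domain $T_B = \{ x + \sqrt{-1}\,y : x \in B,\ y \in \mathbb{R}^n \}$ with convex base $B \subset \mathbb{R}^n$ is a domain of holomorphy, and then to apply it twice.

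The quickest route is to cite Bochner's tube theorem, which asserts that the envelope of holomorphy of an arbitrary tube $T_B$ is the tube $T_{\mathrm{conv}(B)}$ over the convex hull of $B$. In particular, when $B$ is already convex, $T_B$ coincides with its own envelope of holomorphy and is therefore a domain of holomorphy; this is exactly the criterion used in \cite{yang1982automorphism} for the normal case. To make the argument self-contained one can instead verify pseudoconvexity directly and appeal to the solution of the Levi problem. Since the tube is invariant under translation in the imaginary directions, the distance to its boundary depends only on the real part, $\mathrm{dist}(z, \partial T_B) = \mathrm{dist}(x, \partial B)$ with $x = \mathrm{Re}\,z$. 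For a convex set $B$ this boundary distance is concave in $x$, so $-\log \mathrm{dist}(x, \partial B)$ is convex in $x$ and constant in $y$, hence convex on $\mathbb{C}^n \cong \mathbb{R}^{2n}$ and therefore plurisubharmonic. As it is also an exhaustion, $T_B$ is pseudoconvex and thus a domain of holomorphy. Equivalently, at each boundary point $x_0 \in \partial B$ one uses a supporting functional: there is $\nu \in \mathbb{R}^n$ and $c \in \mathbb{R}$ with $\langle \nu, x\rangle < c$ for $x \in B$ and $\langle \nu, x_0\rangle = c$, and complexifying to $\ell(z) = \sum_j \nu_j z^j$ gives $\mathrm{Re}\,\ell(z) < c$ on $T_B$, so $f(z) = (c - \ell(z))^{-1}$ is holomorphic on $T_B$ and singular at $x_0$; a barrier of this type at every boundary point exhibits $T_B$ as a domain of holomorphy via the Cartan--Thullen theorem.

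Finally I would apply this criterion to both members of the pair: by part (1) of the first Observation on exponential families the base $\Omega$ of $\mathbb{M}_{\tt S}$ is convex, and by part (2) of the second Observation the base $\Omega^\ast$ of $\mathbb{W}_{\tt S}$ is convex, so both tube domains are domains of holomorphy. There is no serious obstacle here: the entire content of the statement is the convexity of the two parameter domains, which is intrinsic to exponential families and already established, so the only care required is the standard passage from convexity of the base to pseudoconvexity (equivalently, the domain-of-holomorphy property) of the tube over it.
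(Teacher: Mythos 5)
Your proof is correct and follows the same route as the paper: the paper's entire argument is that both $\mathbb{M}_{\tt S}$ and $\mathbb{W}_{\tt S}$ are tube domains over convex bases (convexity of $\Omega$ and $\Omega^\ast$ being the recorded facts about exponential families), and tube domains with convex base are domains of holomorphy. You simply supply more detail than the paper does for that last step (Bochner's tube theorem, plurisubharmonicity of $-\log\mathrm{dist}$, or supporting-hyperplane barriers), where the paper just cites the literature.
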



\subsubsection{Symmetries of $\mathbb{M}_{\tt Normal}$}

We now study the geometry of $\mathbb{M}_{\tt Normal}$ in more detail.  For a more complete reference on this space, we refer the reader to the following papers \cite{yang2007invariant,yang2016differential,yang2013sectional}. The first two of these study the geometry of this space from the perspective of automorphic forms and spectral analysis. The third computes the curvature tensor explicitly. We also recommend the work of Molitor \cite{molitor2014gaussian}, which studied this space from the perspective of mathematical physics.

 We start by considering the symmetries of $\mathbb{M}_{\tt Normal}$. The group of holomorphic isometries is the affine symplectic group $SL(2,\R) \ltimes \R^2$ \cite{molitor2014gaussian}. As a result, the group of holomorphic symmetries is (real) $5$-dimensional, and acts transitively.

 Intuitively, it is reasonable that the symmetry group of this space is real five-dimensional. In particular, any two-dimensional tube domain has a natural $\mathbb{R}^2$-action induced by translation in the fibers and the symmetry group of the half-plane is three-dimensional.  
 As we shall see shortly, it is not always the case that the holomorphic symmetry groups of tube domains are the semi-direct product of the affine symmetries group on the base and the $\mathbb{R}^n$-symmetry induced by translation in the fibers. However, this is indeed the case for $\mathbb{M}_{\tt Normal}$.

It is worth noting that $\mathbb{M}_{\tt Normal}$ is often studied in the context of the Jacobi group $G^J$, which is defined as
\[G^J := SL(2, \mathbb{R}) \ltimes \operatorname{Heis}(\mathbb{R}), \] 
where $\operatorname{Heis}(\mathbb{R})$ is the Heisenberg group (see \cite{yang2007invariant} for details). There is a natural action of this group on $\mathbb{M}_{\tt Normal}$, under which the metric is invariant. However, this action is not faithful, which is why the isometry group is smaller.\footnote{The affine symplectic group is not a subgroup of the Jacobi group. The latter is a central extension of the former.} Nonetheless, the Jacobi group plays an important role in the study of modular forms on this space, which is why we introduce it now.

From the fact that $\mathbb{M}_{\tt Normal}$ is homogeneous, it immediately follows that $\mathbb{M}_{\tt Normal}$ is complete. However, there is a more general result, which was proven by Molitor.
\begin{proposition*}[Proposition 2.15  \cite{molitor2014gaussian}]
A K\"ahler manifold defined on the tangent bundle $T \M$ of a Hessian manifold $\M$ is complete if and only if the underlying Hessian manifold $(\M,g,D)$ is complete.
\end{proposition*}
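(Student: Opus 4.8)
The plan is to relate completeness of the Kähler manifold on $T\M$ to completeness of the base Hessian manifold $(\M, g, D)$ by understanding the precise relationship between the two Riemannian metrics. The Kähler metric on $T\M$ has Kähler potential $\Phi^h$, which is the lift of the convex potential $\Phi$ that is constant in the fiber directions $y$. Writing out $\omega = \frac{\sqrt{-1}}{2}\frac{\partial^2 \Phi}{\partial x^i \partial x^j} dz^i \wedge d\bar z^j$, the associated Riemannian metric on $T\M$ takes the form $G = g_{ij}(x)(dx^i \otimes dx^j + dy^i \otimes dy^j)$, since the Hessian of $\Phi^h$ in the real coordinates $(x,y)$ is block-diagonal with both blocks equal to the Hessian of $\Phi$ in $x$. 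The key structural observation is thus that the fiber metric and the base metric coincide, and both are pulled back from the single Hessian metric $g$ on the base $\M$.

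Given this, **the first step** is to establish the forward direction: if $(\M, g, D)$ is incomplete, produce an incomplete geodesic (or finite-length divergent path) on $T\M$. The zero section $\M \hookrightarrow T\M$ (given by $y = 0$) is totally geodesic with respect to $G$, by the symmetry $y \mapsto -y$ which is an isometry fixing the zero section pointwise. Hence any incomplete geodesic in $(\M,g)$ lifts to an incomplete geodesic in $(T\M, G)$, immediately giving incompleteness of the total space. **The second step** is the converse, which is the substantive direction: assuming $(\M, g, D)$ is complete, show $(T\M, G)$ is complete. I would use the Hopf–Rinow criterion via metric completeness of Cauchy sequences, or equivalently show that geodesics extend for all time. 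The strategy is to project a divergent finite-length curve $\gamma(t) = (x(t), y(t))$ in $T\M$ down to its base component $x(t)$; because $G = g_{ij}(dx^i dx^j + dy^i dy^j)$, the length of $\gamma$ dominates the $g$-length of the projected curve $x(t)$, so $x(t)$ has finite $g$-length and must converge in $\M$ by completeness of the base.

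**The hard part will be** controlling the fiber component $y(t)$: convergence of the base projection does not by itself prevent the fibers from escaping to infinity, since the fiber directions are noncompact ($y \in \R^n$). I would handle this by noting that the fiber metric coefficients $g_{ij}(x(t))$ converge to a positive-definite limit (as $x(t)$ converges to an interior point where $g$ is nondegenerate), so along the tail of the curve the fiber metric is uniformly comparable to a fixed flat Euclidean metric; finite $G$-length then forces the Euclidean length of $y(t)$ to be finite, so $y(t)$ converges in $\R^n$ as well. Combining the two, the full curve $\gamma(t)$ converges, establishing that Cauchy sequences converge and hence $(T\M, G)$ is complete. The only delicate point requiring care is ensuring the limit of $x(t)$ lies in the interior $\Omega$ rather than on the boundary --- but this is exactly guaranteed by completeness of $(\M, g, D)$, since a boundary limit would correspond to a finite-length divergent geodesic downstairs, contradicting base completeness.
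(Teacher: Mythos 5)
The paper does not prove this statement; it is quoted verbatim from Molitor (Proposition 2.15 of \cite{molitor2014gaussian}), so there is no in-paper argument to compare against. Judged on its own, your proposal is essentially correct and is the standard argument for completeness of Sasaki-type metrics: the identity $G = g_{ij}(x)\,(dx^i\otimes dx^j + dy^i\otimes dy^j)$ is right, the zero section is indeed totally geodesic (fixed-point set of the isometry $y\mapsto -y$, with induced distance equal to $d_g$ since the projection is $1$-Lipschitz), and the converse direction correctly reduces to controlling the fiber component. Two small points deserve tightening. First, ``finite $g$-length implies convergence'' is false for arbitrary curves (they can oscillate); you should either run the argument on a unit-speed $G$-geodesic $\gamma:[0,T)\to T\M$, whose projection is then $1$-Lipschitz into the complete space $(\M,d_g)$ and hence extends continuously to $t=T$, or phrase everything with Cauchy sequences. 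Second, the uniform comparability $g_{ij}(x)\geq \lambda\,\delta_{ij}$ that you invoke to control $y(t)$ holds on a compact set in a fixed affine chart; you get this because Hopf--Rinow on the complete base makes the closed ball of radius equal to the remaining length around the limit point compact, and the projected tail of the curve stays inside it. With those two adjustments the argument closes, and it matches what one would expect the cited proof to be.
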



The horizontal submanifolds \[ \M_{a,b} = \{(z^1,z^2) ~|~ y^1=a, y^2=b \} \]
induce a totally geodesic foliation of $\mathbb{M}_{\tt Normal}$. Since each of these submanifolds is hyperbolic (and the transition maps to the standard half-plane model are given by Equation \eqref{Transition map from natural to mu-sigma}), we can calculate horizontal geodesics and distances explicitly.  The vertical submanifolds (where the $x$-coordinates are fixed) are Lagrange submanifolds, but are not totally geodesic and so we are not aware of a similar characterization for geodesics whose vertical displacement is non-zero.

\subsubsection{The Ricci and scalar curvature of $\mathbb{M}_{\tt Normal}$}

We now study some curvature properties of $\mathbb{M}_{\tt Normal}$. On a K\"ahler manifold, the Ricci curvature can be expressed as a $(1,1)$-form, which can be computed as
\begin{eqnarray*} Ric_{i\bar j} &=& - \sqrt{-1} \frac{\partial}{\partial z^i} \frac{\partial}{\partial \bar z^j} \log (\det[g]).
\end{eqnarray*}

For $\mathbb{M}_{\tt Normal}$, the K\"ahler form is given by $\omega = \partial \bar \partial \Phi^h$. Computing the Ricci curvature in the $(x_1, y_1, x_2, y_2)$ coordinates, we find that it is given by

 \[ Ric = \left[
\begin{array}{cccc}
0  & 0 & 0 & 0 \\
 0 & -\frac{3}{2 x_2^2} & 0 & 0  \\
 0 & 0 & 0 & 0 \\
 0 & 0 & 0 & -\frac{3}{2 x_2^2}
\end{array}
\right] . \]

For $\mathbb{M}_{\tt Normal}$, the K\"ahler potential $\Phi^h$ is a function of the base (i.e. the $x$-coordinates) alone.
As such, we can calculate the Ricci tensor by computing
\[ \frac{\partial^2}{\partial x^i \partial x^j} \log \left( \det \left[ \frac{\partial^2}{\partial x^a \partial x^b} \Phi \right] \right). \]
This reduces the original $4\times 4$ matrix of Ricci components to a $2 \times 2$ matrix, which greatly simplifies the analysis.

From this computation, we can see that the Ricci curvature is non-positive. However, this metric is not K\"ahler-Einstein. It is worth emphasizing that the Ricci tensor of $\mathbb{M}_{\tt Normal}$ is \textit{not} the Ricci tensor of $\M_{\tt Normal}$ (which is a hyperbolic Riemann surface).
 Taking the trace of the Ricci curvature with the inverse metric, we also find that $\mathbb{M}_{\tt Normal}$ has constant scalar curvature $-6$.

\subsubsection{The bisectional and anti-bisectional curvatures}

In Riemannian geometry, it is often of interest to consider metrics whose sectional curvature is either positive or negative. Oftentimes, this sort of assumption is necessary to obtain analytic results.

In K\"ahler geometry, it is often too restrictive to assume that the sectional curvature has a sign. However, due to the additional structure and symmetries introduced by the complex structure (see Chapter 4 of \cite{ballmann2006lectures} for an introduction), it is possible to define other curvature conditions which play an important role in the analysis of K\"ahler manifolds. Below, we note some of the curvature properties of $\mathbb{M}_{\tt Normal}$. This list is not intended to be exhaustive, and it is likely that there are other interesting curvature properties of this metric, which may play an important role in its analysis.

\begin{enumerate}
    \item The holomorphic sectional curvature does not have a sign \cite{molitor2014gaussian}. At every point there are directions with both negative and positive holomorphic sectional curvature. Similarly, the orthogonal bisectional curvature does not have a sign.
    \item The \emph{orthogonal anti-bisectional curvature} is non-negative definite. 
     The orthogonal anti-bisectional curvature plays an important role in the regularity theory of optimal transport. In particular, if a convex potential $\Phi$ induces a K\"ahler metric with non-negative anti-bisectional curvature, the associated cost function $c(x,y) = \Phi(x-y)$ is \emph{weakly regular}. In other words, there are no local obstructions to establishing continuity for solutions to the Monge problem with respect to this cost.\footnote{More precisely, the orthogonal anti-bisectional curvature is proportional to the MTW tensor, and so non-negative orthogonal anti-bisectional curvature of the K\"ahler metric is equivalent to the cost function satisfying the MTW(0) condition.} For details on the orthogonal anti-bisectional curvature and its relationship with optimal transport, we refer the reader to 
     \cite{khan2020kahler}. We discuss this particular metric in Example 9.
\end{enumerate}

\subsubsection{The geometry of $\mathbb{W}_{\tt Normal}$}

We now turn our attention to the mirror K\"ahler manifold $\mathbb{W}_{\tt Normal}$. As we mentioned previously, this is a Siegel domain, which has been studied extensively in the context of automorphic forms. Let us note some important properties of this space.

\begin{enumerate}
    \item The K\"ahler metric is the Bergman metric. As a result, it is invariant under biholomorphisms and so its isometry group is the same as its automorphism group. 
    \item The space is homogeneous and, as such, complete.
\end{enumerate}

The first property has the following important consequence, which is a classical fact in several complex variables (see, e.g., \cite{shabat1992introduction}).

\begin{obs*}
The isometry group of $\mathbb{W}_{\tt Normal} $ is the projective unitary group $PU(2,1)$.
\end{obs*}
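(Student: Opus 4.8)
The plan is to identify $\mathbb{W}_{\tt Normal}$ with a standard bounded symmetric domain and then invoke the well-known fact that the Bergman metric realizes its automorphism group as the full isometry group. First I would use the biholomorphism $F$ already exhibited in the excerpt, which maps the Siegel half-space $\mathbb{W}_{\tt Normal}$ onto the unit ball $\mathbb{B}^2 \subset \mathbb{C}^2$. Since the Bergman metric is a biholomorphic invariant (as noted in property (1) of the discussion of $\mathbb{W}_{\tt Normal}$), this identification carries the Bergman metric on $\mathbb{W}_{\tt Normal}$ to the Bergman metric on $\mathbb{B}^2$. Hence it suffices to compute the isometry group of $(\mathbb{B}^2, g_{\mathrm{Berg}})$.

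The key structural input is the principle that for a bounded domain equipped with its Bergman metric, every biholomorphism is an isometry, and conversely every holomorphic isometry is a biholomorphism; moreover the ball is a symmetric space of rank one, so its isometry group coincides with the biholomorphic automorphism group (there are no extra anti-holomorphic or non-holomorphic isometries beyond those accounted for, once one fixes the complex structure). The next step is therefore to recall the classical computation $\operatorname{Aut}(\mathbb{B}^2) = PU(2,1)$. Concretely, $\mathbb{B}^2$ is the projectivization of the negative cone for a Hermitian form of signature $(2,1)$ on $\mathbb{C}^3$, and the group $U(2,1)$ of linear transformations preserving that form acts by projective transformations; the kernel of this action is the scalar subgroup, yielding $PU(2,1)$. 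This gives $\operatorname{Isom}(\mathbb{W}_{\tt Normal}) \cong PU(2,1)$.

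The main obstacle, such as it is, lies in justifying that the isometry group is not strictly larger than the automorphism group — that is, ruling out isometries that are not biholomorphic. The clean way to handle this is to appeal to the fact that the Bergman metric is K\"ahler and that on an irreducible bounded symmetric domain every isometry either preserves or reverses the complex structure; the orientation-reversing (anti-holomorphic) isometries are then the composition of holomorphic automorphisms with a fixed conjugation, and whether one includes them is a matter of convention. Since property (1) already asserts that the isometry group equals the automorphism group, I would lean on that identification and simply present the theorem as the computation $\operatorname{Aut}(\mathbb{B}^2) = PU(2,1)$, citing a standard several-complex-variables reference such as \cite{shabat1992introduction} for both the invariance of the Bergman metric and the explicit automorphism group of the ball.
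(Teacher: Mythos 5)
Your proposal is correct and follows essentially the same route as the paper: the paper likewise deduces the result from the fact that the K\"ahler metric on $\mathbb{W}_{\tt Normal}$ is the Bergman metric (hence biholomorphism-invariant, so the isometry group equals the automorphism group), combined with the earlier biholomorphism to the unit ball and the classical identification $\operatorname{Aut}(\mathbb{B}^2)=PU(2,1)$, citing the same reference \cite{shabat1992introduction}. Your additional remarks on anti-holomorphic isometries are a reasonable elaboration of a point the paper leaves implicit, but they do not change the argument.
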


Since $PU(2,1)$ is real six-dimensional, the symmetry group is larger than the semi-direct product of translations on $\mathbb{R}^2$ and the symmetries of $\M_{\tt Normal}$. This occurs because there are automorphisms of $\mathbb{W}_{\tt Normal}$ which do not respect the tube domain structure.
 \begin{obs*}
   Statistical mirror symmetry need not preserve the isometry group of a K\"ahler manifold.
    \end{obs*}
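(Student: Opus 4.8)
The plan is to prove the observation by producing an explicit witness: the normal-distribution mirror pair $\mathbb{M}_{\tt Normal}$ and $\mathbb{W}_{\tt Normal}$ developed above. Since the statement is an existence assertion (``need not preserve''), a single statistical mirror pair whose two members have non-isomorphic isometry groups suffices, and the normal family is the natural candidate because both isometry groups have already been identified in this section.

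First I would assemble the two isometry-group computations established earlier. On the primal side, the group of holomorphic isometries of $\mathbb{M}_{\tt Normal}$ (the Siegel-Jacobi space) is the affine symplectic group $SL(2,\mathbb{R}) \ltimes \mathbb{R}^2$, which is real five-dimensional. On the mirror side, because the K\"ahler metric on $\mathbb{W}_{\tt Normal}$ (the Siegel half-space) is the Bergman metric, its isometry group coincides with its full automorphism group and equals $PU(2,1)$.

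The remaining step is to verify that these two Lie groups are not isomorphic, which can be done in two independent ways. The quickest is to compare dimensions, since the two groups have different real dimensions and isomorphic Lie groups must have equal dimension. A more structural argument, which is robust and does not depend on the exact dimension count, is to observe that $SL(2,\mathbb{R}) \ltimes \mathbb{R}^2$ contains the normal abelian subgroup $\mathbb{R}^2$ of fiber translations and is therefore not semisimple, whereas $PU(2,1)$ is simple; hence no isomorphism between them can exist. Either route shows that the isometry group is not an invariant of statistical mirror symmetry.

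I do not expect any genuine obstacle in this final comparison, which is essentially immediate once the two groups are named. The substantive content sits in the preceding two identifications, and in particular in the fact that $\mathbb{W}_{\tt Normal}$ carries extra isometries beyond the expected semidirect product of base symmetries with fiber translations. These extra symmetries come precisely from biholomorphisms of the Siegel half-space that fail to respect its tube-domain structure, reflecting that Legendre duality distorts the affine (and hence complex) structure of the base. Thus the real explanatory work is already done, and the observation simply records its consequence.
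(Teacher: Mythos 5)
Your proposal is correct and takes essentially the same route as the paper: the paper also uses the normal mirror pair as the witness, identifying the isometry groups as $SL(2,\mathbb{R})\ltimes\mathbb{R}^2$ and $PU(2,1)$ and concluding from the dimension discrepancy (attributing the extra symmetries to automorphisms of $\mathbb{W}_{\tt Normal}$ that do not respect the tube-domain structure). Your supplementary semisimplicity argument is a harmless robustness check but adds nothing the dimension count does not already give.
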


It is worth noting that $\mathbb{W}_{\tt Normal}$ is the only strictly pseudo-convex tube domain in complex dimension two which is homogeneous\footnote{$\mathbb{M}_{\tt Normal}$ is homogeneous and pseudo-convex, but not strictly so.} (and its higher dimensional analogues are the only such domains in $\mathbb{C}^n$ \cite{ezhov2018classification}).

As before, the horizontal submanifolds \[ \mathbb{W}_{a,b} = \{(w_1,w_2) ~|~ v_1=a, v_2=b \} \]
induce a totally geodesic foliation of the $\mathbb{W}_{\tt Normal}$. Furthermore, since $\mathbb{W}_{\tt Normal}$ is a complex space form, it is possible to calculate distances and geodesics explicitly (see \cite{sandler1996distance} for details). 

\subsubsection{Curvature properties of $\mathbb{W}_{\tt Normal}$}

\label{Curvature properties of W normal}
Similarly to $\mathbb{M}_{\tt Normal}$, we can understand the geometry of $\mathbb{W}_{\tt Normal}$ by investigating its curvature.

\begin{enumerate}
    \item The metric has constant negative holomorphic sectional curvature.\footnote{There is a general theorem due to Shima which shows that if the a K\"ahler manifold constructed in this fashion has constant holomorphic sectional curvature, the underlying Riemannian manifold $\M$ must have constant sectional curvature \cite{shima1995hessian}. This gives an alternate proof that the space $\M_{\tt Normal}$ is hyperbolic.}

    \item Since $\mathbb{W}_{\tt Normal}$ has constant holomorphic sectional curvature, it is necessarily K\"ahler-Einstein and has constant negative scalar curvature. This space is distinguished in that it is the unique strictly pseudo-convex domain for which the Bergman metric is K\"ahler-Einstein \cite{fu1997strictly,huang2016remark}.
    \item The orthogonal anti-bisectional curvature vanishes and the space has negative cost-curvature (see \cite{khan2020krflow} for a definition of cost-curvature).
\end{enumerate}

By contrasting the geometry of $\mathbb{W}_{\tt Normal}$ with that of $\mathbb{M}_{\tt Normal}$, we find the following fact.

\begin{obs*}
    The dual of a of K\"ahler metric with constant holomorphic sectional curvature need not be K\"ahler-Einstein.
\end{obs*}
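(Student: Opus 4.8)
The plan is to exhibit the statistical mirror pair already constructed in this section as the desired counterexample, so that no new space need be built. By the first item of Section \ref{Curvature properties of W normal}, the space $\mathbb{W}_{\tt Normal}$ carries a K\"ahler metric of constant negative holomorphic sectional curvature, and by the construction of Section \ref{SMS construction} its statistical mirror dual is precisely $\mathbb{M}_{\tt Normal}$. Thus it suffices to verify that $\mathbb{M}_{\tt Normal}$ fails to be K\"ahler-Einstein, and the observation follows immediately.

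First I would recall the explicit Ricci form of $\mathbb{M}_{\tt Normal}$ computed earlier in this section: in the $(x_1, y_1, x_2, y_2)$ coordinates it is the $4 \times 4$ matrix whose only nonzero entries are the two diagonal terms $-\frac{3}{2 x_2^2}$ in the $y_1$- and $y_2$-slots. The crucial structural feature is that this tensor is degenerate; its $x_1$- and $x_2$-rows vanish identically, so $Ric$ has rank two at every point. By contrast, the K\"ahler metric $g$ of $\mathbb{M}_{\tt Normal}$ is positive definite, hence of full rank four.

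The conclusion then follows by a rank comparison. If $\mathbb{M}_{\tt Normal}$ were K\"ahler-Einstein we would have $Ric = \lambda\, g$ for some constant $\lambda$. But $\lambda\, g$ has rank four wherever $\lambda \neq 0$ and is identically zero when $\lambda = 0$, whereas $Ric$ has rank two everywhere on $\mathbb{M}_{\tt Normal}$. Since two equals neither zero nor four, this is impossible, so $\mathbb{M}_{\tt Normal}$ is not K\"ahler-Einstein. As the K\"ahler metric on $\mathbb{W}_{\tt Normal}$ does have constant holomorphic sectional curvature while its dual $\mathbb{M}_{\tt Normal}$ does not, this establishes the statement.

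I do not anticipate any genuine obstacle, since all the substantive content is carried by the explicit curvature computation performed above (and recorded in \cite{MTWnotebook}). The only point demanding care is bookkeeping: one must confirm that it is $\mathbb{W}_{\tt Normal}$, and not $\mathbb{M}_{\tt Normal}$, that carries the constant-holomorphic-sectional-curvature metric, so that the implication genuinely runs in the direction asserted by the observation, namely from a space of constant holomorphic sectional curvature to its (non-Einstein) dual.
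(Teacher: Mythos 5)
Your proposal is correct and is essentially the paper's own argument: the paper establishes the observation precisely by contrasting $\mathbb{W}_{\tt Normal}$ (constant holomorphic sectional curvature) with its dual $\mathbb{M}_{\tt Normal}$, whose Ricci tensor was already computed and declared non-Einstein. The only thing you add is the explicit rank-two-versus-rank-four justification for why $\mathrm{Ric} \neq \lambda g$, which the paper leaves implicit; that justification is sound (and is insensitive to the exact placement of the nonzero entries in the displayed Ricci matrix, which is fortunate since the paper's coordinate labelling there looks slightly off).
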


As we will see in Subsection \ref{negtrisection}, the statistical mirror pair of a K\"ahler manifold with constant holomorphic sectional curvature need not even have constant scalar curvature. Recently, Maeta\footnote{In his paper, the results are phrased in terms of Hesse-Einstein and Hesse solitons, but this statement is equivalent to this claim.} showed that the statistical mirror of a K\"ahler-Einstein space is a K\"ahler-Ricci soliton (see Corollary 5.3 of \cite{maeta2021self}).

This phenomena contrasts with mirror symmetry for compact Calabi-Yau manifolds, where the mirror pairs both have vanishing Ricci curvature. From a more conceptual perspective, the Legendre transform inverts the Hessian matrix of a potential. For a semi-flat K\"ahler metric, when the determinant of the Hessian is constant (which corresponds to Ricci flatness in the compact case), the determinant of its dual is also constant. However, for K\"ahler-Einstein metrics with non-zero scalar curvature, the determinant of the Hessian matrix will not be constant, and will instead satisfy a different Monge-Amp\`ere equation, which is not preserved by Legendre duality.\footnote{However, there are some simple examples where the dual of a K\"ahler-Einstein metric is again K\"ahler-Einstein. For instance, consider $\mathbb{H}= \{z = x + \sqrt{-1} y ~|~ y>0 \}$ with the K\"ahler potential $f(z) = -\log(y)$.}

\subsection{K\"ahler-Ricci flow, conjugate flow, and statistical mirror symmetry}

Thus far, the geometric properties of $\mathbb{M}_{\tt Normal}$ and $\mathbb{W}_{\tt Normal}$ we have discussed were previously known in the literature (although the duality between these spaces was not studied in depth). To motivate the study of statistical mirror symmetry, it is natural to ask for new insights into the geometry of these spaces. In this subsection, we do so by considering the relationship between this duality and K\"ahler-Ricci flow. The main result of this subsection is the following.

\begin{proposition}
The spaces $\mathbb{M}_{\tt Normal}$ and $\mathbb{W}_{\tt Normal}$ are K\"ahler-Ricci solitons\footnote{$\mathbb{W}_{\tt Normal}$ is a complex space form, so the content of this proposition is that $\mathbb{M}_{\tt Normal}$ is a soliton and that the coupling persists under the flow.} which are immortally coupled under K\"ahler-Ricci flow.
\end{proposition}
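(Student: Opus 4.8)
The plan is to reduce the Kähler-Ricci flow on each space to a scalar flow on the base potential and then exhibit the self-similar structure explicitly. Because both $\mathbb{M}_{\tt Normal}$ and $\mathbb{W}_{\tt Normal}$ carry Kähler potentials that depend only on the base variables ($x$ and $u$, respectively), the Kähler-Ricci flow $\partial_t\omega = -\mathrm{Ric}(\omega)$ descends, at the level of potentials, to
\[ \partial_t \Phi(x,t) = \log\det\left(\frac{\partial^2 \Phi}{\partial x^i \partial x^j}\right) + (\text{affine in } x), \]
and analogously for $\Phi^\ast(u,t)$ on $\Omega^\ast$; the affine ambiguity is exactly the pluriharmonic freedom that leaves the metric unchanged. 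The mirror $\mathbb{W}_{\tt Normal}$ is the easy half: it is a complex hyperbolic space form, hence Kähler-Einstein with $\mathrm{Ric} = -c\,g$ for some $c>0$, so the scaling ansatz $g(t) = (1+ct)g(0)$ solves the flow and exhibits it as an expanding (gradient) soliton existing for all $t \geq 0$.

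For $\mathbb{M}_{\tt Normal}$ I would solve the base flow with an explicit ansatz adapted to $\Phi_0 = \frac{(x^1)^2}{-4x^2} - \frac{1}{2}\log(-x^2)$. Writing $h_1 = \frac{(x^1)^2}{-4x^2}$ and $h_2 = -\frac{1}{2}\log(-x^2)$, a direct computation gives $\det \mathrm{Hess}(a\,h_1 + b\,h_2) = \frac{-ab}{4(x^2)^3}$, so the family $\Phi(x,t) = a(t)\,h_1 + b(t)\,h_2 + (\text{affine})$ is preserved by the flow and closes to the system $\dot a = 0$, $\dot b = 6$. Hence $\Phi(x,t) = h_1 + (1+6t)\,h_2 + (\text{affine})$. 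The key observation is that this is self-similar: with $\sigma(t) = 1+6t$ and the linear map $L_t(x^1,x^2) = ((1+6t)^{-1/2}x^1,\, x^2)$, which preserves $\Omega$, one checks $\sigma(t)\,\Phi_0(L_t x) = \Phi(x,t)$ up to affine terms. Since $L_t$ extends holomorphically to $T\Omega$ as $\hat L_t(z^1,z^2) = ((1+6t)^{-1/2}z^1, z^2)$, this says precisely that $g_{\mathbb{M}}(t) = (1+6t)\,\hat L_t^\ast g_{\mathbb{M}}(0)$, so $\mathbb{M}_{\tt Normal}$ is an expanding Kähler-Ricci soliton with holomorphic soliton vector field $\frac{d}{dt}\hat L_t|_{t=0} = -3z^1\partial_{z^1}$. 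This self-similar reduction is the main obstacle: everything hinges on recognizing that the flow stays within the two-parameter family of left-invariant Hessian metrics (a consequence of the simply transitive $Aff^+$-symmetry of $\Omega$) and that the resulting evolution is a pure rescaling composed with a base automorphism.

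To show the pair remains coupled, I would use Legendre duality. If $\Phi(x,t)$ solves the base flow on $\Omega$, then along corresponding points $u = \nabla_x\Phi(x,t)$ the envelope theorem gives $\partial_t \Phi^\ast(u,t) = -\partial_t\Phi(x,t) = -\log\det\mathrm{Hess}_x\Phi$, while Legendre duality inverts Hessians, $\mathrm{Hess}_u\Phi^\ast = (\mathrm{Hess}_x\Phi)^{-1}$, so $-\log\det\mathrm{Hess}_x\Phi = \log\det\mathrm{Hess}_u\Phi^\ast$. Therefore $\Phi^\ast(\cdot,t)$ satisfies exactly the base flow on $\Omega^\ast$, which is the Kähler-Ricci flow on $\mathbb{W}_{\tt Normal}$; since the complex structures of $T\Omega$ and $T\Omega^\ast$ are fixed throughout and the flowing potentials remain Legendre dual, $\mathbb{M}_{\tt Normal}(t)$ and $\mathbb{W}_{\tt Normal}(t)$ are a statistical mirror pair for every $t$. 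Finally, immortality is immediate from the explicit solutions: $g_{\mathbb{M}}(t) = (1+6t)\,\hat L_t^\ast g_{\mathbb{M}}(0)$ and $g_{\mathbb{W}}(t) = (1+ct)\,g_{\mathbb{W}}(0)$ are smooth and positive-definite for all $t \geq 0$, so both flows exist for all positive time and remain coupled, i.e. immortally coupled.
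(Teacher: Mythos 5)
Your argument is correct and follows essentially the same route as the paper: reduce K\"ahler--Ricci flow to the parabolic Monge--Amp\`ere flow of the base potential, solve it explicitly within the family $a\,h_1 + b\,h_2$, read off the expanding-soliton structure from the self-similarity, and transfer the solution to $\mathbb{W}_{\tt Normal}$ by Legendre duality. You actually supply more detail than the paper does --- the ODE $\dot a = 0$, $\dot b = 6$, the explicit automorphism $\hat L_t$ and soliton vector field, and the envelope-theorem argument for the coupling in place of the paper's direct computation of $\Phi_t^\ast$ --- the only step left implicit being the check that $\Omega^\ast_t = \{u_2 > u_1^2\}$ is independent of $t$, which the paper verifies explicitly and which is exactly what makes the complex structure of $T\Omega^\ast$ fixed along the flow.
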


Before we discuss this result and some of its consequences, we first provide a brief background on K\"ahler-Ricci flow. The Ricci flow was first introduced by Hamilton \cite{hamilton1982three} and evolves a Riemannian metric by its Ricci curvature:
\[ \frac{\pt}{\pt t} g_{ij} = -2 Ric_{ij}. \]
 This flow plays a central role in modern geometric analysis, largely due to its role in the proof of the the Poincar\'{e} and Geometrization conjectures \cite{perelman2002entropy,perelman2003ricci}.
 If one starts with a K\"ahler metric, the evolving metrics will remain K\"ahler (with the same complex structure), and the resulting flow is called the K\"ahler-Ricci flow \cite{cao1985deformation}. This flow has also been used to prove many results in complex geometry (see, e.g., \cite{shi1997ricci,mok1988uniformization}).

Tube domains form a natural class of metrics where the K\"ahler-Ricci flow is particularly simple to study. In particular, the K\"ahler-Ricci flow induces a flow of the underlying Hessian metric, whereby the Hessian potential $\Phi$ evolves via the parabolic Monge-Amp\`ere equation\footnote{The flow on the underlying Hessian manifold is known as Hesse-Koszul flow, and was studied by Mirghafouri and Malek \cite{mirghafouri2017long}.}
\begin{equation} \label{HesseKoszulflow}
    \frac{\partial}{\partial t} \Phi = \log \left ( \det \left[ D^2 \Phi \right] \right ).
\end{equation}

\subsubsection{K\"ahler-Ricci flow on $\mathbb{M}_{\tt Normal}$ and $\mathbb{W}_{\tt Normal}$}

It is possible to compute the K\"ahler-Ricci flow on $\mathbb{M}_{\tt Normal}$ explicitly.
To do so, we consider the one-parameter family of potentials (indexed by $t$):
\[ \Phi_t = -\frac{x_1^2}{x_2}- t \log(-x_2) \]
defined on the half space $\mathbb{H} = \{ (x_1,x_2)~|~ x_2<0 \}$. 
 We then define a family of K\"ahler metrics\footnote{The K\"ahler metric corresponding to univariate normal distributions (in their natural parameters) corresponds to $\mathbb{M}_{1/2}$. Here, we can ignore the constant term $\frac{1}{2}\log(\pi)$ since it does not affect the metric.} on the tube domain $\mathbb{H} \times \sqrt{-1} \mathbb{R}^2$ by lifting the potentials $\Phi_t$:  
\[ \omega_t = \sqrt{-1}\partial \bar \partial  \Phi^h_t.\]

It is straightforward to verify that the potentials $\Phi_t$ solve Equation \eqref{HesseKoszulflow}, and so induce solutions to the K\"ahler-Ricci flow.
In fact, the space $\mathbb{M}_{\tt Normal}$ is an expanding K\"ahler-Ricci soliton. Upon rescaling, the flow acts by dilating the $z_1$ coordinate, but does not alter the geometry.\footnote{ This result gives a simple explanation to a fact about these spaces which was originally observed by Yang et. al. \cite{yang2013sectional}. They studied a two-parameter family of invariant metrics on this space (which they denote $\mathbb{H}_{1,1}, ds^2_{1,1,A,B}$) and found that the scalar curvature does not depend on the $B$ parameter. In our language, changing the $B$ parameter acts to rescale the $z_1$-coordinate, so the invariance of the scalar curvature follows from the fact that $\mathbb{M}_{\tt Normal}$ is a K\"ahler-Ricci soliton with constant scalar curvature.}

\subsubsection{The Conjugate flow}

Given a solution to the K\"ahler-Ricci flow on a tube domain (or more generally the tangent bundle of a Hessian manifold), we can  use statistical mirror symmetry to induce a conjugate flow on $\mathbb{W}$.\footnote{Here, we drop the subscripts \texttt{ Normal} to highlight that the metrics are not constant.}

To construct this flow, we consider Kahler-Ricci flow on $\mathbb{M}$ and its associated flow on the Hessian manifold $\M$. By taking the Legendre transformation at each time $t$, we obtain a dual Hessian manifold defined on a (time-dependent) domain $\Omega^\ast_t$. We can then consider a tube domain $ T \Omega^\ast_t$ and construct a K\"ahler metric by lifting the potential $\Phi^\ast_t$. 

This flow was previously studied by Fei and Picard  \cite{fei2019anomaly} as a model to understand the relationship between T-duality and Hermitian curvature flows.\footnote{The main focus of their work is actually the ``anomaly flow," which deforms conformally balanced Hermitian metrics. However, there are some deep connections between this flow and K\"ahler-Ricci flow \cite{fei2019unification}.}
In general, the domain $\Omega^\ast_t$ will evolve with $t$. From the viewpoint of statistical mirror symmetry, this shows that the \textit{complex structure} of $(T \Omega^\ast_t, \Phi^\ast_t)$ can change along the conjugate flow. On the other hand, the \emph{symplectic structure} remains constant under conjugate flow (which is one manifestation of the duality exchanging complex with symplectic geometry). As such, conjugate flow changes both the metric and the complex structure of $\mathbb{W}$, in such a way as to leave the symplectic structure fixed.
 
In the case of the statistical manifold of normal distributions, we can write out this flow explicitly by considering the one-parameter family of potentials $\Phi_t$ and computing the Legendre transform at each time. Doing so, we find that
\begin{equation}
    \Phi_t^\ast = -t-t \log \left( \frac{u_2-u_1^2}{2t} \right)
\end{equation} 
which is defined on the domain \[ \Omega^\ast_t = \{ (u_1,u_2) ~|~  u_2-u_1^2 \geq 0 \},\] (which happens to be independent of the time). As such, the conjugate flow on $\mathbb{W}$ is the family of K\"ahler metrics on  $T \Omega^\ast_t$ where the metric is given by $\omega^\ast_t = \sqrt{-1} \partial \bar \partial \Phi_t^\ast$.

This flow is defined for positive time, and for all time the associated K\"ahler manifold is complex hyperbolic, so this flow expands the metric uniformly. Furthermore, since $\mathbb{W}$ is a complex hyperbolic space form, we find the following.

\begin{proposition}
K\"ahler-Ricci flow on $\mathbb{M}_{\tt Normal}$ is identical to conjugate flow on $\mathbb{W}_{\tt Normal}$.
\end{proposition}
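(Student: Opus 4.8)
The plan is to establish the cleaner statement that the conjugate flow on $\mathbb{W}$---which is by construction the Legendre transform of the K\"ahler--Ricci flow on $\mathbb{M}$---is itself the K\"ahler--Ricci flow on $\mathbb{W}$, so that statistical mirror symmetry identifies the two flows. First I would reduce the conjugate flow to an explicit dilation. Writing $\psi = u_2-u_1^2$ and letting $\Phi^\ast=-\tfrac12\log\psi$ be the static Bergman potential, so that $\omega_{\mathbb{W}}=\sqrt{-1}\,\partial\bar\partial(\Phi^\ast)^h$, the conjugate potential factors as
\[ \Phi^\ast_t \;=\; -t - t\log\!\Big(\tfrac{\psi}{2t}\Big) \;=\; 2t\,\Phi^\ast + \big(t\log(2t)-t\big). \]
The second summand depends on $t$ alone, hence is pluriharmonic on the fibers of $T\Omega^\ast$ and is killed by $\sqrt{-1}\,\partial\bar\partial$. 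Thus the conjugate flow is the uniform dilation $\omega^\ast_t = 2t\,\omega_{\mathbb{W}}$ of a single fixed metric.

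Next I would verify that this family solves K\"ahler--Ricci flow on $\mathbb{W}$. Since $\mathbb{W}$ is a complex space form it is K\"ahler--Einstein, say $\Ric(\omega_{\mathbb{W}})=-\lambda\,\omega_{\mathbb{W}}$ with $\lambda>0$; as the Ricci form is invariant under constant rescalings, $\Ric(\omega^\ast_t)=-\lambda\,\omega_{\mathbb{W}}$. Comparing with $\partial_t\omega^\ast_t=2\,\omega_{\mathbb{W}}$, the family solves $\partial_t\omega=-\Ric(\omega)$ once the time is normalized so that $\lambda=2$, and it is then the self-similar expanding solution emanating from the cone at $t=0$; uniqueness of the flow on the fixed complex manifold $\mathbb{W}$ identifies it with the K\"ahler--Ricci flow. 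I would then explain that this is forced by the duality, not special to the normal family: the Legendre identities $D^2\Phi^\ast=(D^2\Phi)^{-1}$, whence $\log\det D^2\Phi^\ast=-\log\det D^2\Phi$, together with the envelope identity $\partial_t\Phi^\ast_t(u)=-(\partial_t\Phi_t)(x)$ at dual points $u=\grad\Phi_t(x)$, show that whenever $\Phi_t$ drives the parabolic Monge--Amp\`ere flow \eqref{HesseKoszulflow} underlying K\"ahler--Ricci flow on $\mathbb{M}$, the dual potential $\Phi^\ast_t$ drives the same equation, so its lift solves K\"ahler--Ricci flow on $T\Omega^\ast$.

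The main obstacle is precisely the point where this correspondence usually fails. For a general statistical mirror pair the conjugate flow moves the dual domain $\Omega^\ast_t$ and therefore alters the complex structure of $\mathbb{W}$; since K\"ahler--Ricci flow must preserve the complex structure, in that generality the conjugate flow is not an honest K\"ahler--Ricci flow at all. The crux is therefore to use the special feature of the normal family that $\Omega^\ast_t=\{u_2-u_1^2>0\}$ is independent of $t$, so that $\mathbb{W}$ is a fixed complex manifold and $\omega^\ast_t$ is a genuine flow on it; this is what collapses the conjugate flow onto intrinsic K\"ahler--Ricci flow. A secondary, bookkeeping difficulty is to fix the Einstein normalization and the matching time scale so that the dilation factor produced by the Legendre transform agrees with $\partial_t\omega=-\Ric(\omega)$ exactly, rather than only up to a reparametrization of time or a soliton diffeomorphism.
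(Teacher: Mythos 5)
Your core argument is the paper's own. The authors give no formal proof; the justification they intend is exactly your first paragraph plus the Einstein observation: the explicit formula for $\Phi^\ast_t$ exhibits the conjugate flow as a uniform dilation of the fixed Bergman-type metric on the \emph{fixed} tube domain $T\Omega^\ast$, and a uniform dilation of a negative K\"ahler--Einstein metric is its K\"ahler--Ricci flow. Your emphasis on the time-independence of $\Omega^\ast_t$ as the crucial special feature is also precisely the point the paper makes in the paragraph following the proposition.

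The problem is your second paragraph. The chain ``envelope identity plus $\log\det D^2\Phi^\ast=-\log\det D^2\Phi$'' purports to show that the Legendre dual of \emph{any} Hesse--Koszul solution again satisfies the Hesse--Koszul equation; taken at face value this would make conjugate flow always coincide with K\"ahler--Ricci flow, which contradicts the paper's explicit assertion (and the entire point of the correction term in Equation \eqref{Conjugate flow}) that the two flows differ in general, and would render the proposition vacuous. Two things break the general argument. First, K\"ahler--Ricci flow determines $\Phi_t$ only modulo time-dependent affine functions $\ell_t(x)$; for a representative with $\partial_t\Phi_t=\log\det D^2\Phi_t+\ell_t(x)$, the envelope identity yields $\partial_t\Phi^\ast_t-\log\det D^2\Phi^\ast_t=-\ell_t\bigl(\grad\Phi^\ast_t(u)\bigr)$, which is not affine in $u$ unless $\grad\Phi^\ast_t$ is affine, so the dual potential of a generic representative does \emph{not} solve the Hesse--Koszul equation. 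Second, and more fundamentally, $\Omega^\ast_t$ and hence the complex structure of $\mathbb{W}$ generically move with $t$, so the conjugate flow is not a flow of metrics on a fixed complex manifold at all; your own third paragraph says exactly this, leaving paragraphs two and three asserting incompatible things. The correct logic is the one you end on: the Legendre identities reduce the question to whether the dual affine structure is time-independent, and the explicit computation for the normal family shows that it is. Finally, your normalization worry is genuine and should be resolved rather than deferred: with the paper's stated $\Phi^\ast_t=2t\,\Phi^\ast+c(t)$ the dilation rate does not match the Einstein constant of $\sqrt{-1}\,\partial\bar\partial\bigl(-\tfrac12\log(u_2-u_1^2)\bigr)^h$ (one finds $\log\det D^2\Phi^\ast=6\Phi^\ast+\mathrm{const}$), so either the time variable must be reparametrized or the quoted $\Phi_t$ carries a normalization error; as written, the family is only a time-reparametrized K\"ahler--Ricci flow.
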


To explain why this is noteworthy, let us define conjugate flow more precisely. We say that a Hessian manifold (or its tube domain) evolves via conjugate flow if its Hessian metric evolves via the equation
\begin{equation} \label{Conjugate flow}
\frac{ \partial}{\partial t} g_{j k}=\frac{1}{2} \frac{\partial^{2}}{\partial x_{j} \partial x_{k}} \log \operatorname{det}\left(g\right)-\frac{1}{2} \sum_{q} \frac{\partial}{\partial x_{q}} \log \operatorname{det}\left(g\right) \frac{\partial}{\partial x_{j}} g_{q k}
\end{equation}
Under this flow, the first term is the same as for K\"ahler-Ricci flow and the second is a ``correction term" which deforms the affine structure of the Hessian manifold (or equivalently the complex structure of its tube domain). From this, we can see that in general conjugate flow and K\"ahler-Ricci flow are not the same.






\subsubsection{Preserved curvature inequalities}

One of the main motivations in finding K\"ahler-Ricci solitons is that they can be used to derive strong inequalities about the flow (see, e.g. \cite{hamilton1986four, cao1992harnack}) and find preserved curvature conditions. Since $\mathbb{M}_{\tt Normal}$ is a soliton with non-negative orthogonal anti-bisectional curvature and $\mathbb{W}_{\tt Normal}$ is a soliton with negative cost curvature, it is natural to ask whether either of these properties are preserved by K\"ahler-Ricci flow. In recent work of the first named author and F. Zheng, we studied these questions and showed that non-positive anti-bisectional curvature (as with $\mathbb{W}_{\tt Normal}$) is preserved.

\begin{theorem}[K-Zheng `20]
\label{Negative antibisectional curvature is preserved by KR flow}
Let $(T \Omega, \omega_t)$ be a family of complete K\"ahler metrics with bounded curvature which solve the K\"ahler-Ricci flow. Suppose that the initial metric $\omega_0$ has non-positive anti-bisectional curvature. Then for all positive time, anti-bisectional curvature remains non-positive.
\end{theorem}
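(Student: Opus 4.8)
The plan is to prove this by deriving a parabolic evolution equation for the anti-bisectional curvature along the flow and then applying a maximum principle on the (convex) cone of metrics with non-positive anti-bisectional curvature, in the spirit of Hamilton, Bando and Mok's preservation arguments for curvature conditions.

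First I would exploit the semi-flat structure to reduce the problem to the base. Since the metrics live on a tube domain $T\Omega$, the K\"ahler--Ricci flow preserves the tube-domain form and reduces, as in Equation \eqref{HesseKoszulflow}, to the Hesse--Koszul flow $\frac{\pt}{\pt t}\Phi = \log\det[D^2\Phi]$ for the base potential $\Phi(x,t)$ on $\Omega$. Writing $g_{i\bar j}=\Phi_{ij}$ for the Hessian, the full K\"ahler curvature takes the form
\[ R_{i\bar j k\bar l} = \tfrac{1}{4}\bigl(-\Phi_{ijkl} + \Phi^{pq}\Phi_{ikp}\Phi_{jlq}\bigr), \]
and the anti-bisectional curvature is the contraction of $R$ against two holomorphic vectors in the ``anti'' pattern $A(X,Y)=R_{i\bar j k\bar l}X^iX^k\bar Y^{\bar j}\bar Y^{\bar l}$ (as opposed to the bisectional pattern $X^i\bar X^{\bar j}Y^k\bar Y^{\bar l}$). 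Thus $A$ is a fourth-order expression in $\Phi$, proportional to the Ma--Trudinger--Wang tensor, and the entire statement descends to a claim about the potential evolving under Hesse--Koszul flow.

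Next I would compute the evolution of $A$. Starting from the standard reaction--diffusion equation for the curvature operator under K\"ahler--Ricci flow,
\[ \Bigl(\frac{\pt}{\pt t}-\Delta\Bigr)R_{i\bar j k\bar l} = R_{i\bar j p\bar q}R_{q\bar p k\bar l} - R_{i\bar p k\bar q}R_{p\bar j q\bar l} + (\text{first-order Ricci terms}), \]
I would contract with the vectors defining $A$, extended to be parallel at the point in question, to obtain
\[ \Bigl(\frac{\pt}{\pt t}-\Delta\Bigr)A = \mathcal{Q}(R), \]
where $\mathcal{Q}$ collects the quadratic curvature terms. In the semi-flat case the Ricci contributions and the Laplacian simplify substantially because $\Phi$ depends only on the base, so $\mathcal{Q}$ can be rewritten purely in terms of the third and fourth derivatives of $\Phi$ together with the inverse Hessian $\Phi^{pq}$.

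Finally I would run the maximum principle on the cone $\{A\le 0\}$: at a first time and null direction where $A$ touches zero, one must verify that the reaction term $\mathcal{Q}(R)$ is non-positive, so that the flow cannot push $A$ across zero. I expect \emph{this algebraic sign check to be the main obstacle}. Unlike non-negative bisectional curvature, the anti-bisectional curvature is not one of the classically studied invariant conditions, and showing $\mathcal{Q}(R)\le 0$ on the boundary of the cone requires carefully combining the K\"ahler symmetries of $R_{i\bar j k\bar l}$ with the constraints imposed by sitting at a null direction (and possibly the orthogonality structure inherited from the MTW interpretation). A secondary difficulty is that $T\Omega$ is non-compact, so the maximum principle must be justified in the complete, bounded-curvature setting; here I would use Shi's local derivative estimates together with a cutoff/barrier argument, or an Omori--Yau-type maximum principle, to localize and control the behavior at infinity.
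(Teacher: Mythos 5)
Your overall strategy --- reduce to the semi-flat/Hesse--Koszul picture on the base, derive a reaction--diffusion equation for the anti-bisectional curvature, and run a tensor maximum principle in the complete, bounded-curvature setting --- is the same template that the cited Khan--Zheng work follows (note that the present paper only quotes this theorem from that companion paper; it contains no proof of its own). The reduction to the base is legitimate, with the caveat that one should invoke uniqueness of bounded-curvature solutions to the flow to conclude that a semi-flat initial metric stays semi-flat, so that the flow really is governed by the parabolic Monge--Amp\`ere equation \eqref{HesseKoszulflow}; and the non-compactness can indeed be handled by Shi-type estimates and a localized maximum principle under the bounded-curvature hypothesis. Your structural formula $R_{i\bar j k\bar l}\propto -\Phi_{ijkl}+\Phi^{pq}\Phi_{ikp}\Phi_{jlq}$ is correct up to normalization.

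The genuine gap is the one you flag yourself: the entire content of the theorem is the algebraic verification that the reaction term $\mathcal{Q}(R)$ is non-positive at a first time and null pair $(X,Y)$ where $A(X,Y)=0$, and this is never carried out. It cannot be waved through by analogy with Mok's or Bando's arguments for bisectional curvature: the anti-bisectional pattern $R_{i\bar j k\bar l}X^iX^k\bar Y^{\bar j}\bar Y^{\bar l}$ is only real and well-defined because of the semi-flat symmetries of $R$, the quadratic terms $R_{i\bar jp\bar q}R_{q\bar pk\bar l}-R_{i\bar pk\bar q}R_{p\bar jq\bar l}$ contract against $(X,X,\bar Y,\bar Y)$ in a pattern that is not one of the classically controlled ones, and the first-variation information available at a null pair must be exploited in a genuinely different way than in the bisectional case. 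That this is delicate rather than formal is visible in the companion statement (Theorem \ref{AntibisectionalcurvatureRicciflow}): preservation of the opposite sign for the \emph{orthogonal} anti-bisectional curvature is claimed only for complex surfaces, which signals that the null-vector algebra is dimension-sensitive and does not follow from soft convexity of the cone $\{A\le 0\}$. Until the sign of $\mathcal{Q}(R)$ on the boundary of that cone is actually established --- in the cited work this is done by an explicit computation with the third and fourth derivatives of $\Phi$ under the flow, organized so that the problematic terms either vanish at a null pair or assemble into manifestly signed squares --- what you have is a correct plan of attack rather than a proof.
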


We furthermore show that for complex surfaces, non-negative orthogonal anti-bisectional curvature (as with $\mathbb{M}_{\tt Normal}$) is preserved.

\begin{theorem}[K-Zheng `20] \label{AntibisectionalcurvatureRicciflow}
Suppose that $ \Omega \subset \mathbb{R}^2 $ is a convex domain and $\Phi :\Omega \to \mathbb{R}$ is a strongly convex function so that the associated K\"ahler manifold $(T \Omega, \omega_0)$ 
\begin{enumerate}
    \item is complete,
    \item has bounded curvature, and
    \item has non-negative orthogonal anti-bisectional curvature.
\end{enumerate}
 Then the orthogonal anti-bisectional curvature remains non-negative along K\"ahler-Ricci flow. 
\end{theorem}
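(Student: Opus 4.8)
The plan is to run a parabolic maximum principle on the evolving curvature tensor and to verify that the closed convex cone of curvature operators with non-negative orthogonal anti-bisectional curvature is preserved by the reaction terms of the flow. First I would recall the evolution of the K\"ahler curvature tensor under K\"ahler-Ricci flow, which (after the Uhlenbeck trick, i.e.\ evolving a unitary frame so as to cancel the linear Ricci terms) takes the form
\[ \frac{\partial}{\partial t} R_{i\bar j k \bar l} = \Delta R_{i\bar j k \bar l} + Q_{i\bar j k \bar l}, \]
where $Q$ is the usual quadratic expression in the curvature. Because $(T\Omega,\omega_t)$ is a lifted Hessian metric, the potential $\Phi^h_t$ depends only on the base coordinates, every curvature component is an explicit expression in the derivatives of $\Phi_t$, and the flow descends to the Hesse-Koszul flow \eqref{HesseKoszulflow} on $\Omega$. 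Writing the orthogonal anti-bisectional curvature (as defined in \cite{khan2020kahler}) as the cross-paired contraction $\mathcal{A}(X,Y) = R(X,\bar Y, Y, \bar X)$, restricted to orthogonal $(1,0)$ vectors with $g_{i\bar j}X^i\overline{Y^j}=0$, I would derive the reaction-diffusion (in)equality satisfied by $\mathcal{A}$ and reduce the theorem to controlling the induced reaction term $Q_{\mathcal{A}}$.

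Next I would address the non-compactness of $T\Omega$. Here hypotheses (1) and (2), completeness and bounded curvature, are exactly what is needed to run a maximum principle on an open manifold: I would invoke an Omori--Yau-type maximum principle, or a cutoff/barrier argument together with Shi's local derivative estimates, to localize and to reduce the problem to the behavior of $Q_{\mathcal{A}}$ at an interior first zero of $\mathcal{A}$.

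The heart of the argument is then the cone (null-eigenvector) condition: at a space-time point where $\mathcal{A}$ first reaches $0$ in some direction $X_0$ with orthogonal partner $Y_0$, one must show $Q_{\mathcal{A}}\ge 0$. Here complex dimension two is decisive. The orthogonal complement of a line in the $(1,0)$ tangent space is itself a line, so the orthogonal partner $Y_X$ is unique up to phase and $\mathcal{A}$ reduces to a function of the single direction $X\in \mathbb{CP}^1$; moreover the K\"ahler curvature tensor of a surface has only a handful of independent components. Using the first-order optimality conditions in the direction variable (the $X$-gradient of $\mathcal{A}$ vanishes at $X_0$, which together with the orthogonality constraint yields a Lagrange-multiplier relation and forces algebraic identities among the curvature components) together with the K\"ahler symmetries, I would reduce $Q_{\mathcal{A}}$ at $X_0$ to a manifestly non-negative quadratic form.

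I expect this last algebraic step to be the main obstacle: the reaction term $Q$ mixes the cross-paired components that define the anti-bisectional curvature with the ordinary bisectional components, and only after imposing orthogonality and the surface constraint do the indefinite terms cancel. This is also the conceptual reason the statement is restricted to complex surfaces: in higher dimension the orthogonal partner is no longer unique and additional surviving cross-terms can violate the cone condition, whereas the preceding theorem on \emph{non-positive} anti-bisectional curvature holds in all dimensions precisely because its cone condition does not require the surface simplification.
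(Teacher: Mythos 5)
First, a point of reference: the paper does not actually prove this statement. It is imported verbatim from external work of Khan and Zheng (the ``K-Zheng `20'' attribution), and the surrounding text only records the result and its dual reformulation (Theorem \ref{Conjugate flow preserved curvature}). So there is no in-paper proof to compare against; I can only assess your proposal on its own terms and against the standard architecture of such preservation results.

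Your overall strategy --- evolution equation for the curvature after the Uhlenbeck trick, a maximum principle adapted to a complete non-compact manifold with bounded curvature, and a null-eigenvector (cone) condition checked at a first interior zero, with the complex-surface hypothesis entering in the algebra at that zero --- is the right template and is almost certainly the skeleton of the actual argument. The observation that the flow descends to the Hesse--Koszul flow \eqref{HesseKoszulflow} on the base, so that all curvature quantities are expressions in derivatives of $\Phi_t$, is also correct and is in fact how one would most efficiently organize the computation for these lifted metrics.

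There are, however, two genuine problems. The first is that the one formula you commit to for the key quantity is wrong: you write the orthogonal anti-bisectional curvature as $\mathcal{A}(X,Y)=R(X,\bar Y,Y,\bar X)$ with $g_{i\bar j}X^i\overline{Y^j}=0$. By the K\"ahler symmetries $R_{i\bar j k\bar l}=R_{k\bar j i\bar l}=R_{i\bar l k\bar j}$, that expression equals $R(X,\bar X,Y,\bar Y)$, i.e.\ the ordinary orthogonal \emph{bisectional} curvature --- which the paper explicitly says does \emph{not} have a sign on $\mathbb{M}_{\tt Normal}$. The anti-bisectional curvature is the genuinely different pairing (schematically $R(X,\bar Y,X,\bar Y)$ evaluated on the real polarized directions of the tube domain; compare the expression for $\mathfrak{W}^{ijk\ell}$ in Theorem \ref{Conjugate flow preserved curvature}, whose leading term is $\Phi^{i\alpha}\Phi^{j\beta}\Phi^{\gamma k}\Phi^{\delta\ell}\Phi_{\alpha\beta\gamma\delta}$ rather than a bisectional contraction). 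This pairing is not reducible to bisectional curvature, is only naturally real-valued because of the semi-flat structure, and its evolution equation is not the one you would obtain from your stated $\mathcal{A}$. Everything downstream of this definition would have to be redone.

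The second problem is that the decisive step --- showing that the reaction term is non-negative at a null direction, using the first-order conditions and the surface constraint --- is exactly where the entire content of the theorem lives, and your proposal only asserts that you ``expect'' the indefinite cross-terms to cancel there. For the non-positivity statement (Theorem \ref{Negative antibisectional curvature is preserved by KR flow}) the analogous algebra closes in all dimensions; for the non-negative orthogonal case it closes only in complex dimension two, and nothing in your sketch identifies the mechanism that makes it close. Without that computation the argument is a plausible plan rather than a proof.
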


Using Legendre duality, we can translate these results into the dual setting to find conditions which are preserved under conjugate flow. 

\begin{theorem} \label{Conjugate flow preserved curvature}
Let $\Phi(x,t)$ be a convex potential on a (time-dependent) tube domain that evolves via conjugate flow. Suppose that for all covectors $u = \sum_i u_i dx^i$ and $v= \sum_i v_i dx^i$, the potential $\Phi(x,0)$ satisfies the inequality
\begin{equation} \label{mirror negative antibisectional}
    \sum_{i,j,k,\ell} \mathfrak{W}^{ijk \ell} u_i u_j v_k v_\ell \leq 0,
\end{equation}
where $\mathfrak{W}^{ijk \ell}$ is defined to be the quantity\footnote{In the formula, we have written $\mathfrak{W}$ in this way so that the first two terms have corresponding terms in the anti-bisectional curvature and the final term is a ``correction" term.}
\begin{eqnarray*} \label{Mirror anti-bisectional}
   \mathfrak{W}^{ijk \ell}  =  \mathlarger{\mathlarger{\mathlarger{\sum}}}_{\alpha,\beta,\gamma, \delta,\epsilon,\eta,\zeta} \left ( \begin{aligned} &-\Phi^{i \alpha} \Phi^{j \beta} \Phi^{\gamma k} \Phi^{\delta \ell} \Phi_{\alpha \beta \gamma \delta}\\
  & + \Phi^{i \alpha} \Phi^{j \beta} \Phi_{\alpha \beta \gamma} \Phi^{k \epsilon} \Phi^{\ell \zeta} \Phi^{\gamma \eta } \Phi_{\epsilon \zeta \eta} \\
& - \frac{\partial}{\partial x^\delta} \left( \Phi^{i \alpha} \Phi^{j \beta} \Phi^{\gamma k} \right) \Phi^{\delta \ell} \Phi_{\alpha \beta \gamma}.
   \end{aligned} \right).
   \end{eqnarray*}
   
   Then this inequality persists for all time along the flow.
\end{theorem}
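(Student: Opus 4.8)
\emph{Proof proposal.} The plan is to deduce this from Theorem \ref{Negative antibisectional curvature is preserved by KR flow} by transporting both the flow and the curvature inequality across the Legendre duality that underlies the mirror correspondence. There are three ingredients. First, if $\Phi(x,t)$ evolves by conjugate flow \eqref{Conjugate flow}, then its Legendre dual $\Phi^\ast(u,t)$ evolves by the Hesse-Koszul equation \eqref{HesseKoszulflow}, i.e.\ the associated metric $\omega = \sqrt{-1}\,\partial\bar\partial(\Phi^\ast)^h$ solves K\"ahler-Ricci flow. Second, the tensor $\mathfrak{W}^{ijk\ell}$ built from $\Phi$ is the anti-bisectional curvature of $\Phi^\ast$, re-expressed in the $x$-coordinates via Legendre duality, so that hypothesis \eqref{mirror negative antibisectional} says exactly that $\Phi^\ast$ has non-positive anti-bisectional curvature at $t=0$. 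Third, one applies the K--Zheng theorem on the dual side and reads the conclusion back.

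For the first ingredient I would set up the Legendre dictionary: with $u_i = \partial\Phi/\partial x^i$ one has $\partial/\partial u_i = \sum_j \Phi^{ij}\,\partial/\partial x^j$, where $\Phi^{ij}$ is the inverse Hessian, and $\partial^2\Phi^\ast/\partial u_i\partial u_j = \Phi^{ij}$. Differentiating \eqref{Conjugate flow} and using the variation-of-determinant formula, one verifies that the correction term in \eqref{Conjugate flow} is precisely the one needed so that $\partial_t\Phi^\ast = \log\det[D^2\Phi^\ast]$; this is essentially the content of the construction of conjugate flow in Subsection \ref{SMS construction}, and I would just make the identity explicit.

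The main obstacle is the second ingredient. The anti-bisectional curvature of a Hessian potential $\Psi$ (in the sense of \cite{khan2020kahler}) is a fourth-order quantity: schematically a contraction of $\Psi_{abcd}$ against four inverse Hessians, corrected by a term quadratic in the third derivatives $\Psi_{abc}$. To compute this for $\Psi = \Phi^\ast$ in the $u$-chart and rewrite it in terms of $\Phi$, I would repeatedly apply $\partial/\partial u_i = \sum_j \Phi^{ij}\,\partial/\partial x^j$ together with the inverse-Hessian identity $\partial\Phi^{ij}/\partial x^\ell = -\sum_{a,b}\Phi^{ia}\Phi^{jb}\Phi_{ab\ell}$. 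The first two lines in the definition of $\mathfrak{W}$ are the images of the two standard terms of the anti-bisectional curvature under this substitution, while the third line --- the $\partial_{x^\delta}(\Phi^{i\alpha}\Phi^{j\beta}\Phi^{\gamma k})$ term --- is exactly the extra third-derivative contribution produced when a $u$-derivative is passed through the inverse-Hessian factors. Verifying that every raised and lowered index contracts into the displayed expression is delicate bookkeeping, but the form of each term is forced by the Legendre relations; this is the crux of the argument.

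With these identifications, the conclusion follows immediately. Hypothesis \eqref{mirror negative antibisectional} asserts that $\omega = \sqrt{-1}\,\partial\bar\partial(\Phi^\ast)^h$ has non-positive anti-bisectional curvature at $t=0$; since conjugate flow of $\Phi$ is K\"ahler-Ricci flow of $\Phi^\ast$, Theorem \ref{Negative antibisectional curvature is preserved by KR flow} preserves this for all positive time, which is the same as \eqref{mirror negative antibisectional} persisting along the conjugate flow. I would note that the completeness and bounded-curvature hypotheses of Theorem \ref{Negative antibisectional curvature is preserved by KR flow} must hold for $\Phi^\ast$; these can be taken as standing assumptions or verified on the dual side using the completeness criterion quoted above from Molitor.
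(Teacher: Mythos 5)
Your proposal is correct and follows the same route as the paper: the paper likewise observes that $\mathfrak{W}$ is the anti-bisectional curvature of the Legendre-dual manifold (verified by the same Legendre-duality calculation you outline), so that the theorem is a restatement of Theorem \ref{Negative antibisectional curvature is preserved by KR flow} under the correspondence between conjugate flow and K\"ahler-Ricci flow. Your explicit remark about carrying the completeness and bounded-curvature hypotheses over to the dual side is a point the paper leaves implicit, but it does not change the argument.
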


A calculation shows that $\mathfrak{W}$ is the anti-bisectional curvature of the dual manifold $\mathbb{M}$ and thus Equation \ref{mirror negative antibisectional} states that the anti-bisectional curvature of $\mathbb{M}$ is negative, so this theorem is a restatement of Theorem \ref{Negative antibisectional curvature is preserved by KR flow} in terms of conjugate flow. Ideally, one would want to use conjugate flow to prove new results about K\"ahler-Ricci flow, instead of the other way around. However, we will leave this question for future work and so leave Theorem \ref{Conjugate flow preserved curvature} as a proof of concept for this approach.

\subsection{Conjectural remarks} 
\label{conjectural remarks}
While both $\mathbb{M}_{\tt Normal}$ and $\mathbb{W}_{\tt Normal}$ are well known spaces, their duality has not drawn as much attention. In this section, we make some conjectural remarks to motivate further exploration on this topic.

One of the important mathematical successes of mirror symmetry has been to relate difficult questions in enumerative algebraic geometry to problems on their mirror side that can calculated more readily \cite{morrison1997making}. We hope that it might be possible to use the duality between $\mathbb{M}_{\tt Normal}$ and $\mathbb{W}_{\tt Normal}$ in a similar way.

 \subsubsection{Analytic number theory and the Saito-Kurokawa lift}

Both the Siegel-Jacobi space and the Siegel domain have been studied in the context of number theory, and it is seems likely that their duality might give additional insight.

An automorphic form is a function from a topological group $G$ to the complex numbers $\mathbb{C}$ which is invariant with respect to some discrete subgroup $\Gamma \subset G$. In this case, the relevant group on $\mathbb{M}_{\tt Normal}$ is the Jacobi group (see \cite{yang2015geometry} for details) and the associated group for $\mathbb{W}_{\tt Normal}$ is $Sp(2,\mathbb{R})$.

Since $\mathbb{M}_{\tt Normal}$ and $\mathbb{W}_{\tt Normal}$ are not biholomorphic (and these groups are not the same), the relationship between automorphic forms on these spaces will be somewhat complicated. 
However, the existence of the Saito-Kurokawa lift strongly suggests there is some connection. 

The Saito–Kurokawa lift takes modular forms on the hyperbolic half plane $\mathbb{H}$ to a distinguished class (a \emph{Spezialschar}) of Siegel modular forms on the Siegel half space. Its existence was conjectured independently by Saito and Kurokawa and proven by Andrianov \cite{andrianov1979modular}, Maass \cite{maass1979spezialschar} and Zagier \cite{zagier1979conjecture}. 
This lift can be understood as a composition of three mappings, with the last map being a map from certain Jacobi forms (i.e., automorphic forms on $\mathbb{M}_{\tt Normal}$) to certain automorphic forms (i.e., a Spezialschar) on the Siegel upper half-space. This space naturally contains $\mathbb{W}_{\tt Normal}$ (as the space of symmetric complex matrices whose imaginary parts are positive definite and whose diagonal elements are equal) and may also have a similar interpretation as a ``complexified exponential family." Keeping in mind that $\mathbb{W}_{\tt Normal}$ and $\mathbb{M}_{\tt Normal}$ are both models for the tangent bundle of the hyperbolic half plane, this picture suggests that there might be a natural way to understand the Saito-Kurokawa lift in terms of information geometry.

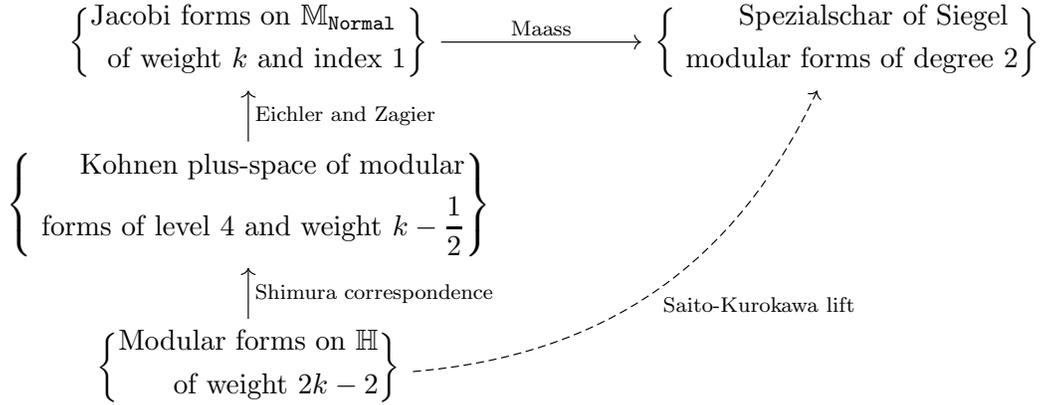
\begin{figure}
    \centering
    \begin{tikzcd}
\left \{\begin{aligned}
\textrm{Jacobi forms on $\mathbb{M}_{\tt Normal}$ } \\ \textrm{of weight $k$ and index 1}\end{aligned}  \right \} \arrow[rr, "\textrm{Maass}"]                  &  & \left \{ \begin{aligned} \textrm{ Spezialschar of Siegel  } \\ \textrm{ modular forms of degree 2}  \end{aligned} \right \} \\
\left \{ \begin{aligned} \textrm{ Kohnen plus-space of modular} \\ \textrm{\! \! forms of level 4 and weight $k-\frac{1}{2}$} \end{aligned} \right \} \arrow[u, "\textrm{Eichler and Zagier}"']                               &  &                                                                               \\
 \left \{ \begin{aligned}\textrm{Modular forms on } \mathbb{H} \\ \textrm{ of weight } 2k-2 \end{aligned} \right  \} \arrow[u, "\textrm{Shimura correspondence}"'] \arrow[rruu, "\textrm{Saito-Kurokawa lift}"', dashed, bend right] &  &                                                                              
\end{tikzcd}

    \caption{The Saito-Kurokawa lift}
    \label{Saito-Kurokawa lift}
\end{figure}

 \begin{question}
Is there a way to understand the Saito-Kurokawa lift using the duality between these two spaces? Are there any number-theoretic applications for this?
\end{question}

If this is indeed the case, it raises the following question.

\begin{question}
Can this type of duality be used more generally in the study of automorphic forms?
\end{question}

For the latter question, the natural spaces to consider are homogeneous tube domains with an invariant metric (such as a K\"ahler-Einstein metric) and to compute the Legendre transform of the underlying potential on the base to find a dual space. After considering the Siegel half-space and the Siegel-Jacobi space, the natural next class of examples are tube domains over homogeneous convex cones \cite{rothaus1966construction}, in which case the dual space will be a tube domain over the dual cone.


\subsubsection{The Moduli of Abelian Varieties}

It is possible to interpret the Siegel half-space as the moduli space of principally polarized Abelian varieties (see Chapter 8 of \cite{birkenhake2013complex}). As such, one can consider a point in $\mathbb{W}_{\tt Normal}$ as corresponding to the Abelian variety
 \[T_M = \mathbb{C}^2 / \left(M \mathbb{Z}^2 + \mathbb{Z}^2 \right) \]
where $M$ is a complex matrix of the form
\[ M = \left[
\begin{array}{cc}
z_1  & z_2 \\
z_2 & z_1
\end{array}
\right]  \]
whose imaginary part is positive definite.

It is of interest whether there is some moduli space interpretation for the Siegel-Jacobi space $\mathbb{M}_{\tt Normal}$ as well. Naively, one may hope that $\mathbb{M}$ is the moduli of a class of \emph{dual} Abelian varieties, and that this duality provides the right notion of correspondence between these spaces.

\begin{question}
Does $\mathbb{M}_{\tt Normal}$ have an interpretation as a moduli space of dual Abelian varieties?
\end{question}

Furthermore, $\mathbb{M}_{\tt Normal}$ and $\mathbb{W}_{\tt Normal}$ are not merely complex varieties, but also K\"ahler manifolds, which means that they also have a natural notion of distance and curvature. It is also possible to write down Darboux coordinates for these spaces (see Section \ref{Kahler mirror pair construction}), so it is straightforward to compute volumes in these spaces.\footnote{Neither of these spaces have finite volume, so this does not give a natural way to discuss \emph{random} Abelian varieties.} As a result, it seems natural to ask whether these metrics are intrinsically meaningful for the moduli of Abelian varieties. 

\begin{question}
 For genus $g$ curves, the Teichm\"uller metric provides a natural metric on the moduli space, which provides insight into the geometry of such surfaces.
    Does the K\"ahler metric on $\mathbb{W}_{\tt Normal}$ provide an analogy of a ``Teichm\"uller metric" for a class of principally polarized Abelian varieties? In other words, can we interpret it as a canonical K\"ahler metric in order to induce the ``distance" between Abelian varieties? If so, is there a corresponding interpretation for the K\"ahler metric on $\mathbb{M}_{\tt Normal}$ in terms of dual Abelian varieties?
\end{question}

\subsection{Generalization to the multivariate normal distribution}

It is also possible to study the moduli space of multivariate normal distributions ({\tt MulNor}). However, if one considers multivariate normal distributions with arbitrary covariance matrices, the resulting statistical manifold is an exponential family, but the Fisher metric is not hyperbolic \cite{skovgaard1984riemannian}. Instead, a more natural generalization is to consider the moduli space of isotropic multivariate normal distributions, which are multivariate normal distributions whose covariance matrix is $\sigma \, Id_n$ (where $\sigma$ is a real parameter), which we denote by {\tt IsoMulNor,n}. The associated statistical manifold is isometric to hyperbolic $n$-space $\mathbb{H}^n$. Furthermore, it is possible to parametrize isotropic multivariate normal distributions as an exponential family to obtain a mirror pair of K\"ahler manifolds. Doing so, one establishes a duality between $\mathbb{W}_{\tt IsoMulNor, n}$, which is the Siegel domain of degree $n$ and $\mathbb{M}_{\tt IsoMulNor, n}$, which is the Siegel-Jacobi space $\left (\mathbb{H}_{1,n}, ds^2_{1, n,1,1} \right )$ (using the notation of Yang \cite{yang2007invariant}). As with the univariate case, the dual space $\mathbb{W}_{\tt IsoMulNor, n}$ is a complex space form. The geometry $\mathbb{M}_{\tt IsoMulNor,n}$ is more complicated, but this perspective allows us to immediately understand some of its properties. 

To do so, we first compute the associated log-partition function and its Legendre dual. Doing so, we find that (ignoring some additive constants)
\[ \Phi_{ \tt IsoMulNor,n}(x^1, \ldots, x^n) = \frac{1}{2}\left( \frac{1}{x^1} \sum_{i=2}^{n} x^i \cdot x^i -\log \left ( -x^1 \right) \right) \] and its dual potential is
\[ \Phi_{ \tt IsoMulNor,n}^\ast(u_1, \ldots, u_n) = -\frac{1}{2} - \frac{1}{2}\log \left ( 2 u_1 - \sum_{i=2}^{n} u_i \cdot u_i \right). \] From this calculation, we are able to compute the Ricci and scalar curvatures of these metrics.

\begin{proposition}
The Ricci potential  $\rho_{\tt IsoMulNor,n} = -\log \left( \det \left [ D^2 \Phi_{\tt IsoMulNor,n} \right ] \right) $ of $\mathbb{M}_{\tt IsoMulNor,n}$ satisfies 
\[ \rho = - (n+1) \log (-2 \cdot x^1). \]
As a result, the scalar curvature of $\mathbb{M}_{\tt IsoMulNor,n}$ is $-(n+1).$
\end{proposition}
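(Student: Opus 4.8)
The plan is to compute everything directly from the stated potential $\Phi_{\tt IsoMulNor,n}$, exploiting the fact that $x^1$ plays a role completely different from $x^2,\dots,x^n$. First I would write down the Hessian $H = D^2\Phi$; because $\Phi$ contains the squared variables $x^2,\dots,x^n$ only through the single factor $1/x^1$, the matrix $H$ is a bordered matrix consisting of one distinguished $x^1$-row and column together with a diagonal $(n-1)\times(n-1)$ block in the $x^2,\dots,x^n$ directions. I would then evaluate $\det H$ by the Schur-complement (block-determinant) formula, read off the Ricci potential $\rho = -\log\det H$, and finally recover the scalar curvature by tracing the Ricci form against the inverse metric.

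Concretely, writing $r^2 = \sum_{i=2}^n (x^i)^2$, differentiation produces a lower block proportional to $(x^1)^{-1}\,\mathrm{Id}_{n-1}$, a border vector $b$ whose entries scale like $x^j/(x^1)^2$, and a corner entry carrying both an $r^2/(x^1)^3$ piece and a $1/(x^1)^2$ piece. The key computation is the Schur complement: the quadratic form $b^\top D^{-1} b$ reproduces precisely the $r^2/(x^1)^3$ term of the corner, so these cancel and only the $1/(x^1)^2$ term survives. Multiplying by the determinant of the diagonal block then gives $\det H = c\,(x^1)^{-(n+1)}$ for an explicit constant $c$, whence $\rho = -\log\det H$ is, up to an additive constant, a multiple of $\log(-2 x^1)$, matching the claimed expression. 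The decisive structural point is that after this cancellation $\rho$ depends on $x^1$ alone.

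For the scalar curvature I would use that $\mathbb{M}_{\tt IsoMulNor,n}$ carries a semi-flat metric: the K\"ahler potential $\Phi^h$ is constant in the fiber directions, so the Ricci form has base components proportional to $\partial_{x^i}\partial_{x^j}\rho$ and the inverse K\"ahler metric is proportional to the inverse Hessian $\Phi^{ij}$. Tracing therefore reduces the scalar curvature to the purely base-level quantity $\Phi^{ij}\,\partial_{x^i}\partial_{x^j}\rho$. Since $\rho=\rho(x^1)$, only the $\Phi^{11}\,\partial_{x^1}^2\rho$ term contributes; here $\Phi^{11}$ is the reciprocal of the Schur complement found above and scales like $(x^1)^2$, while $\partial_{x^1}^2\rho$ scales like $(x^1)^{-2}$, so their product is a constant, which I expect to equal $-(n+1)$.

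The main obstacle, and really the only nontrivial step, is the determinant evaluation, specifically verifying the Schur-complement cancellation that removes all dependence on $x^2,\dots,x^n$. Everything downstream is bookkeeping: once $\det H$ is a pure power of $x^1$, the constancy of the scalar curvature is forced by $\rho$ being a function of $x^1$ alone, so no new estimate is needed. I would take care with the normalization constant $c$ and with the factor (and sign) conventions relating the K\"ahler scalar curvature to $\Phi^{ij}\partial_{x^i}\partial_{x^j}\rho$, since these determine whether the final constant comes out as exactly $-(n+1)$.
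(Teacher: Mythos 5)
Your proof is correct and follows essentially the same route as the paper, which simply performs this direct computation (deferred to the cited Mathematica notebook) after having established that for semi-flat metrics the Ricci form reduces to $\frac{\partial^2}{\partial x^i \partial x^j}\log\det\left[D^2\Phi\right]$ traced against the inverse Hessian $\Phi^{ij}$. The Schur-complement cancellation you identify is exactly the mechanism that makes $\det\left[D^2\Phi\right]$ a pure power of $x^1$ (the border vector's quadratic form against the diagonal block exactly reproduces the $r^2/(x^1)^3$ piece of the corner entry), and your closing caution about signs and normalizations is warranted: carrying the computation out literally gives $\det\left[D^2\Phi\right]$ proportional to $(-x^1)^{-(n+1)}$ and a trace equal to a constant multiple of $n+1$, so the overall sign of $\rho$ and the factor relating the Chern and Riemannian scalar curvatures depend on conventions the paper does not make fully explicit.
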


From this, we can see that the Ricci curvature is non-positive and the metrics have constant scalar curvature. Using Yang's notation, this answers Question 3 of \cite{yang2019problems} for the spaces\footnote{Note that we have used $n$ in the second index to match the dimension of the multivariate normal distribution. However, Yang normally denotes this index by $m$.} $\left (\mathbb{H}_{1,n}, ds^2_{1, n,A,B} \right )$. It is also possible to compute the orthogonal anti-bisectional curvature of these spaces. Using computer algebra to simplify the expression, we find the following.

\begin{proposition}
For all $n$, the space $\mathbb{M}_{\tt IsoMulNor,n}$ has non-negative orthogonal anti-bisectional curvature.
\end{proposition}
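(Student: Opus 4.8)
The plan is to combine a homogeneity reduction with a single explicit curvature computation. First I would recall from \cite{khan2020kahler} that for a K\"ahler metric on a tube domain with convex potential $\Phi$, the orthogonal anti-bisectional curvature is, up to a positive normalizing factor, the quadratic form
\[ \mathfrak{A}(\xi,\eta) = \sum_{i,j,k,\ell}\Big( \sum_{p,q}\Phi^{pq}\Phi_{ijp}\Phi_{k\ell q} - \Phi_{ijk\ell}\Big)\xi^i\xi^j\eta^k\eta^\ell, \]
evaluated on real covectors $\xi,\eta$ that are $g$-orthogonal, i.e. $\sum_{i,j}\Phi_{ij}\xi^i\eta^j = 0$. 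Here $\Phi_{ij},\Phi_{ijk},\Phi_{ijk\ell}$ are partial derivatives of $\Phi = \Phi_{\tt IsoMulNor,n}$ and $\Phi^{pq}$ is the inverse Hessian. The proposition is then the assertion that $\mathfrak{A}\ge 0$ on the orthogonal locus.

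The first key step is to observe that $\mathfrak{A}$ is an affine-invariant tensor and that the underlying Hessian manifold is homogeneous under its group of affine isometries, so that it suffices to verify $\mathfrak{A}\ge 0$ at a single point. Indeed, the Fisher-Chentsov isometries coming from the affine maps $\zeta\mapsto a\zeta+b$ (with $a>0$, $b\in\mathbb{R}^{n-1}$) of the sample space act on the natural parameters by the linear maps $x^1\mapsto a^{-2}x^1$ and $x^i\mapsto a^{-1}x^i - 2a^{-2}b_i x^1$; these preserve the affine structure $D$ and the Fisher metric, and a short check shows they act transitively on $\Omega = \{x^1<0\}$. Since both the metric and the tensor $\mathfrak{A}$ are preserved, non-negativity at one point propagates to all of $\M$.

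The second step is to evaluate everything at the convenient base point $x_\ast = (c,0,\dots,0)$ with $c<0$. Because $\Phi$ is a function of $x^1$ together with the single quadratic combination $\sum_{i\ge 2}(x^i)^2$, its derivative tensors are extremely sparse: at $x_\ast$ the Hessian $\Phi_{ij}$ is diagonal, all purely-``mean'' third derivatives $\Phi_{ijk}$ with $i,j,k\ge 2$ vanish, and among the fourth derivatives only $\Phi_{1111}$ and $\Phi_{11ij}$ survive. Feeding these into $\mathfrak{A}$ and computing the inverse-Hessian term and the fourth-derivative term separately, the ``mixed'' contributions cancel, leaving
\[ \mathfrak{A}(\xi,\eta) = -\frac{(\xi^1)^2(\eta^1)^2}{c^4} + \frac{2}{c^2}\,|\hat\xi|^2|\hat\eta|^2 + \frac{4}{c^3}\,\xi^1\eta^1(\hat\xi\cdot\hat\eta), \]
where $\hat\xi=(\xi^2,\dots,\xi^n)$ and similarly for $\eta$. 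The final step is to impose the orthogonality constraint, which at $x_\ast$ reads $\tfrac{1}{2c^2}\xi^1\eta^1 = \tfrac{1}{c}(\hat\xi\cdot\hat\eta)$, i.e. $\hat\xi\cdot\hat\eta = \tfrac{1}{2c}\xi^1\eta^1$. Substituting this into the cross term turns $\frac{4}{c^3}\xi^1\eta^1(\hat\xi\cdot\hat\eta)$ into $\frac{2}{c^4}(\xi^1)^2(\eta^1)^2$, which more than compensates the first term and gives
\[ \mathfrak{A}(\xi,\eta) = \frac{(\xi^1)^2(\eta^1)^2}{c^4} + \frac{2}{c^2}\,|\hat\xi|^2|\hat\eta|^2 \ge 0. \]

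I expect the main obstacle to be twofold: the bookkeeping in the fourth-derivative contraction — correctly counting the symmetric index patterns $\Phi_{11ij}$ distributed across the two $\xi$-slots and two $\eta$-slots — together with the conceptual point of justifying that $\mathfrak{A}$ is genuinely preserved by the affine isometry group, so that the one-point reduction is legitimate. As a safeguard, the same conclusion can be reached without homogeneity by performing the analogous computation at a general point with computer algebra, as in \cite{MTWnotebook}; the $n=2$ specialization of the last display recovers the known non-negativity for $\mathbb{M}_{\tt Normal}$, which provides a useful consistency check.
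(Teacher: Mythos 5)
Your proof is correct, and it takes a genuinely different route from the paper: the paper establishes this proposition purely by symbolic computation (``using computer algebra to simplify the expression,'' deferring to the accompanying Mathematica notebook \cite{MTWnotebook}), with no human-readable derivation. Your two structural inputs --- (i) that the sample-space affine maps $\zeta \mapsto a\zeta + b$ act \emph{linearly} on the natural parameters, preserve both $g$ and the flat structure $D$, and act transitively on $\Omega = \{x^1 < 0\} \times \mathbb{R}^{n-1}$, so that the affine-invariant tensor $\mathfrak{A}$ need only be checked at one point; and (ii) the sparsity of the derivative tensors of $\Phi_{\tt IsoMulNor,n}$ at $x_\ast = (c,0,\dots,0)$ --- are exactly what make a hand computation feasible for all $n$ simultaneously, whereas the paper's approach in principle requires a separate (machine) verification for each $n$ or a symbolic computation in general dimension. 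I checked your contraction: with the (convex) potential $-\tfrac{1}{2x^1}\sum_{i\ge 2}(x^i)^2 - \tfrac{1}{2}\log(-x^1)$ one gets $\Phi_{11} = \tfrac{1}{2c^2}$, $\Phi_{ij} = -\tfrac{\delta_{ij}}{c}$, $\Phi_{111} = -\tfrac{1}{c^3}$, $\Phi_{1ij} = \tfrac{\delta_{ij}}{c^2}$, $\Phi_{1111} = \tfrac{3}{c^4}$, $\Phi_{11ij} = -\tfrac{2\delta_{ij}}{c^3}$ at $x_\ast$, and both your intermediate expression $-\tfrac{(\xi^1)^2(\eta^1)^2}{c^4} + \tfrac{2}{c^2}|\hat\xi|^2|\hat\eta|^2 + \tfrac{4}{c^3}\xi^1\eta^1(\hat\xi\cdot\hat\eta)$ (the mixed terms $(\xi^1)^2|\hat\eta|^2$ and $(\eta^1)^2|\hat\xi|^2$ do cancel between the two contractions, with the factor-of-four count of the $\Phi_{11ij}$ index patterns coming out right) and the substitution of the orthogonality relation $\hat\xi\cdot\hat\eta = \tfrac{1}{2c}\xi^1\eta^1$ are accurate, yielding the manifestly non-negative $\tfrac{(\xi^1)^2(\eta^1)^2}{c^4} + \tfrac{2}{c^2}|\hat\xi|^2|\hat\eta|^2$. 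One cosmetic remark: the potential as printed in the paper, $\tfrac{1}{2}\bigl(\tfrac{1}{x^1}\sum_{i\ge 2} x^i\cdot x^i - \log(-x^1)\bigr)$, is not convex on $\{x^1<0\}$ as written (the quadratic term needs the opposite sign); your computation implicitly uses the correct convex normalization, which is the right call, but it would be worth stating that explicitly so a reader does not think your Hessian entries disagree with the paper's formula. Your approach buys an explicit closed form for the curvature and a transparent reason for positivity; the paper's buys only the bare statement.
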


As a final note, we pose as an open question the problem of finding exponential families whose tube domains correspond to the other Siegel-Jacobi spaces. Doing so would give a way to compute the K\"ahler potentials of these spaces explicitly and determine other properties of their geometry in a straightforward way.

\section{What is Statistical Mirror Symmetry?}
\label{Construction}


While the correspondence between the Siegel-Jacobi space and the Siegel half-plane is the archetypal example of statistical mirror symmetry, this duality can be defined for more general Hessian manifolds. In this section, we provide a precise definition for the concept and contrast it with mirror symmetry of Calabi-Yau manifolds. We will focus on the case when the mirrors are K\"ahler. It is possible to extend this idea to the non-K\"ahler case, and we will make some comments about this more construction at the end of this section.

\subsection{K\"ahler statistical mirror symmetry}
From a high level perspective, K\"ahler statistical mirror symmetry can be described quite simply. For any Hessian manifold $(\M,g, D)$, there is a dual Hessian manifold $(\M,g, D^\ast)$, where the affine structure is induced by dual coordinates and the metric is given by the Legendre dual of the original potential (in each affine chart). 
 Furthermore, for any Hessian manifold $(\M,g, D)$, it is possible to define a K\"ahler metric on its tangent bundle, by lifting the potential as we did in the previous section. This map from a Hessian manifold to its tangent bundle is known in the mathematical physics literature as the `r-map' \cite{alekseevsky2009geometric}.

\begin{definition}[Statistical mirror symmetry-K\"ahler case] \label{SMS-abstract}
Two K\"ahler manifolds $\mathbb{M}$ and $\mathbb{W}$ are a statistical mirror pair if they are constructed from the $r$-maps of dual Hessian structures $(\M,g, D)$ and $(\M,g, D^\ast)$.
\end{definition}

This definition of statistical mirror symmetry is concise, but requires a fair amount of background to be meaningful. It also does not provide any motivation for studying this notion, or give any insight into the geometry of the mirror pairs. As such, in the rest of this section we will construct such the two spaces more concretely. Before doing so, however, let us note on the connection between Definition \ref{SMS-abstract} and mirror symmetry for Calabi-Yau manifolds.

Recall that Calabi-Yau manifolds are K\"ahler manifolds whose first Chern class vanishes. By a celebrated result of Yau, such manifolds admit a metric of vanishing Ricci curvature. In the context of Hessian manifolds and their tangent bundles, the spaces $\mathbb{M}$ and $\mathbb{W}$ will be Ricci-flat whenever the Hessian potential satisfies the Monge-Amp\`ere equation
\begin{equation}
\label{MongeAmpere2}
     \det  D^2 \Phi = C.
\end{equation}
 In this case, if we quotient the fibers $\mathbb{M}$ by a co-compact lattice and quotient the fibers of $\mathbb{W}$ by its dual lattice,  we obtain Leung's construction of mirror Calabi-Yau manifolds in the semi-flat case \cite{leung2000mirror}. As such, statistical mirror symmetry is related to, but distinct from, mirror symmetry for semi-flat Calabi-Yau manifolds. 
 
 The differences between the two constructions are two-fold. First, we do not assume that our metrics are Ricci flat. In fact, the only compact Calabi-Yau manifolds (with their Ricci-flat metrics) which arise via pulling back solutions to Equation \ref{MongeAmpere2} over a compact affine base are complex tori \cite{ChengYau}. As such, the non-trivial examples of statistical mirror symmetry will not be Calabi-Yau. 
 
 Second, we do not compactify the fibers by quotienting by a co-compact lattice and its dual. There are several reasons why it preferable to consider the full tangent bundle in this context. First, for Hessian manifolds whose affine structures are non-trivial, it is generally not possible to quotient by a co-compact lattice in a consistent way. Second, even in the cases where the affine structure is trivial, quotienting the fibers obscures aspects of the correspondence between the primal and dual space. For instance, for the Siegel half-space and the Siegel-Jacobi space, quotienting the fibers seems to destroy any potential correspondence of the modular forms.

\subsubsection{Constructing K\"ahler statistical mirror pairs}
\label{Kahler mirror pair construction}

To construct statistical mirror pairs more concretely, it is helpful to provide a second definition for Hessian manifolds which is equivalent to Definition \ref{Hessian manifold-coordinate definition} and which makes the duality of Hessian manifolds much more explicit (at the expense of suppressing the role the ``Hessian" plays).

\begin{definition*}[Hessian  manifold -- Information geometric] \label{Hessian manifold connection definition}
 A Riemannian manifold $(\M,g)$ is said to be \textit{Hessian} if it admits dually flat connections. That is to say, it admits two flat (torsion- and curvature-free) connections $D$ and $D^*$ satisfying
\begin{equation} \label{conjugateconnection}
X(g(Y,Z)) = g(D_X Y, Z) + g(Y, D^*_X Z) 
\end{equation}
 for all vector fields $X,~ Y$, and  $Z$. Because of these dual flat connections, a Hessian manifold is often said to be \textit{dually flat}.
\end{definition*}

The duality between the connections $D$ and $D^\ast$ is the crux of statistical mirror symmetry. However, in order to view this as a correspondence between complex and symplectic manifolds, we consider this duality in terms of the tangent bundle.
 For any Riemannian manifold $(\M,g)$ together with an affine connection $D$ (not necessarily curvature-free nor torsion-free), it is possible to construct an almost-Hermitian structure on the tangent bundle $T \M$ known as the Sasaki metric (for details on this construction, we refer the reader to \cite{dombrowski1962geometry,satoh2007almost}).

For an arbitrary connection and metric, these structures are somewhat complicated to write out explicitly (see Section \ref{nonK_mirror} for the exact formulas). However, for a flat connection $D$, this construction simplifies greatly if we work in terms of the associated affine coordinates (in which the Christoffel symbols vanish). Namely, if we let $x=\{ x^1, \cdots, x^n \}$, be such a coordinate chart on $\M$, we consider the associated coordinates $\phi: T \M \to \mathbb{R}^{2n} $ given by
\[ \phi \left(\left. y^{i} \frac{\partial}{\partial x^{i}}\right|_{p}\right)=\left(x^{1}(p), \ldots, x^{n}(p), y^{1}, \ldots, y^{n}\right). \] 
We then consider the complex valued coordinates $\{ z^i = x^i + \sqrt{-1} y^i \}_{i=1}^n$, which form holomorphic coordinates on $T \M$. As a result, the tangent bundle is in fact a complex manifold, not simply almost complex.\footnote{The fact that the Sasaki metric is complex is a consequence of the connection $D$ being flat (see Proposition \ref{prop_dombrowski}).}  We define the Hermitian metric $h$ as
\[ h(\partial_{x^i},\partial_{x^j})= g(\partial_{x^i},\partial_{x^i}), \,\,\,\, h(\partial_{y^i},\partial_{y^j})= g(\partial_{x^i},\partial_{x^i}),\]
and 
\[h(\partial_{x^i},\partial_{y^j})=0.\]

When $(\M,g,D)$ is a Hessian manifold, it is possible to simplify this expression by lifting the Hessian potential $\Phi$ to $T\M$. As we did in the context of normal distributions, we define the horizontal lift
\[\Phi^h(z) = \Phi(Re(z))\]
and define the K\"ahler form
\[\omega_\Phi = \frac{\partial^2}{\partial z^i \partial \bar z^j} \Phi^h. \]

With this form, $T \M$ is a K\"ahler manifold and it is straightforward to show that the associated Hermitian metric is $h$. Furthermore, we can also write out Darboux coordinates for this space explicitly. To do so, we note that since $\M$ is a Hessian manifold, the dual connection $D^\ast$ is flat. As such, we can use it to induce coordinates $u$ on $\M$ which are bi-orthogonal to the $x$-coordinates (i.e. satisfy $g(x^i,u_j)=\delta^i_j$). In fact, we can write out these coordinates explicitly using the potential function as
\[u_i = D_{x^i} \Phi. \]

\begin{obs*}
The paired coordinates $\{ (u^i ,y^i)\}_{i=1}^n$ are Darboux coordinates on $T \M$.
\end{obs*}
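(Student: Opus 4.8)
The plan is to verify directly that, in the paired coordinates $(u_i, y^i)$, the K\"ahler form $\omega_\Phi$ coincides with the standard symplectic form $\sum_i du_i \wedge dy^i$. Since $(\M, g, D)$ is Hessian, the local potential $\Phi$ is strongly convex, so its gradient map $x \mapsto u = (\partial \Phi/\partial x^1, \dots, \partial \Phi/\partial x^n)$ is a diffeomorphism onto the dual domain $\Omega^\ast$; combined with the fiber coordinates $y$, this shows that $(x,y) \mapsto (u,y)$ is a bona fide change of coordinates on $T\M$. It therefore suffices to compute $\omega_\Phi$ in these coordinates.

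First I would rewrite $\omega_\Phi$ in the underlying real coordinates $(x^i, y^i)$. Writing $z^i = x^i + \sqrt{-1}\,y^i$ and recalling that $\Phi^h$ depends only on $x = \mathrm{Re}(z)$, the coefficient $\frac{\partial^2 \Phi^h}{\partial z^i \partial \bar z^j}$ reduces (up to the fixed overall normalization of $\omega_\Phi$) to the real Hessian $\Phi_{ij} := \partial^2 \Phi / \partial x^i \partial x^j$. Expanding $dz^i \wedge d\bar z^j$ into its real and imaginary parts and contracting against the symmetric tensor $\Phi_{ij}$, the terms $dx^i \wedge dx^j$ and $dy^i \wedge dy^j$ drop out by symmetry, leaving
\[
\omega_\Phi = \sum_{i,j} \Phi_{ij}\, dx^i \wedge dy^j.
\]

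Next I would invoke the defining relation of the dual coordinates. Since $u_i = D_{x^i}\Phi = \partial \Phi / \partial x^i$, differentiating gives $du_i = \sum_j \Phi_{ij}\, dx^j$; this is exactly the statement that the Hessian is the Jacobian of the Legendre transform, and it is where the bi-orthogonality $g(\partial_{x^i}, \partial_{u_j}) = \delta^i_j$ enters. Substituting into the previous display and relabeling indices (again using $\Phi_{ij} = \Phi_{ji}$) collapses the expression to the canonical form
\[
\omega_\Phi = \sum_i du_i \wedge dy^i,
\]
which is precisely the assertion that $(u_i, y^i)$ are Darboux coordinates.

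There is no serious obstacle here; the content is a short computation, and the main steps are forced. The only points requiring care are the bookkeeping of the constant factors produced by the complex-to-real conversion (these are absorbed into the normalization of $\omega_\Phi$ fixed earlier) and confirming at the outset that $(u,y)$ genuinely forms a chart. The latter is guaranteed by the strong convexity of $\Phi$, equivalently by the positive-definiteness of $\Phi_{ij}$, which makes the gradient map $x \mapsto u$ a diffeomorphism and hence $du_i = \sum_j \Phi_{ij}\,dx^j$ an invertible change of the base coordinates along $T\M$.
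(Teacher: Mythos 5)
Your proof is correct, and it is the standard verification: the paper states this observation without supplying a proof, and your computation (reduce $\omega_\Phi$ to $\sum_{i,j}\Phi_{ij}\,dx^i\wedge dy^j$ using the symmetry of the Hessian, then substitute $du_i=\sum_j\Phi_{ij}\,dx^j$ from $u_i=\partial\Phi/\partial x^i$) is exactly the argument the authors are implicitly relying on. Your preliminary check that $(u,y)$ is a genuine chart via strong convexity of $\Phi$ is a worthwhile addition rather than a deviation.
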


To construct the dual K\"ahler manifold $\mathbb{W}$, we repeat this construction, but use the \textit{dual connection} $D^\ast$ instead of the primal one. Associated with the preferred coordinate of the dual connection is a convex function $\Phi^\ast$, which is the convex dual of $\Phi$. Furthermore, the primal $x$-coordinates can be obtained by differentiating $\Phi^\ast$:
$$
x^i = D^\ast_{u_i} \Phi^\ast .
$$

\subsubsection{Tube domains}

The previous discussion provides an explicit description of the K\"ahler geometry of the tangent bundle of a Hessian manifold. In this paper we will primarily focus on a special case, which is when the associated Hessian manifold is a domain in Euclidean space. If we consider a domain $\Omega$ in Euclidean space, we can produce a flat connection simply by differentiating in coordinates. In this case, the tangent bundle is simply the tube domain $T \Omega$, 
\[ T \Omega = \{ z = x+ \sqrt{-1}y \in \mathbb{C}^n ~|~ x \in \Omega, y \in \mathbb{R}^n \}. \]

 From a topological perspective, these spaces are simple (they are trivial when $\Omega$ is convex). Nonetheless, statistical mirror symmetry produces surprisingly rich structure from the perspective of complex geometry. 


When $\M$ is a Hessian manifold which is defined on a convex domain in Euclidean space,  we can describe the geometry of $\mathbb{M}$ explicitly.
Since $\M$ is a convex domain, the Riemannian metric is given by
\[ g = D^2 \Phi\] for some globally defined strongly-convex function $\Phi :\M \to \mathbb{R}$.
The K\"ahler potential on $\mathbb{M}=T\M$ is then given by the horizontal lift of the convex potential $\Phi$.

\begin{definition}[Horizontal Lift]
A function $\Phi^h: \mathbb{M} \to \mathbb{R}$ is the horizontal lift of a function $\Phi: \M \to \mathbb{R}$ if
\begin{equation}
    \Phi^h(x,y) = \Phi(x).
\end{equation}
\end{definition}

 In other words, the horizontal lift ignores the ``vertical" part of the holomorphic coordinates. Since $\Phi$ is strongly convex on $\M$, $\Phi^h$ is strongly pluri-subharmonic and so defines a K\"ahler metric.

We can also give an explicit characterization of mirror space $\mathbb{W}.$ We first compute the Legendre transform of $\Phi$:
\[ \Phi^\ast(u) = \sup_{x \in \Omega} \, \langle u,x \rangle - \Phi(x) , \]
which is defined on $\Omega^\ast = \{ u \in \R^n: \Phi^\ast (u) < \infty\}$. Then we construct the tube domain $T \Omega^\ast$, which will become $\mathbb{W}$. The K\"ahler potential on $\mathbb{W}$ is just the horizontal lift $(\Phi^\ast)^h$.

\subsection{Non-K\"ahler statistical mirror symmetry}
\label{nonK_mirror}

It is possible to construct a non-K\"ahler version of statistical mirror symmetry as well. This case was studied in depth in our previous work \cite{zhang2020statistical}, and we will discuss it briefly here. Readers who are primarily interested in the K\"ahler theory should feel welcome to skip this subsection.

At a fundamental level, statistical mirror symmetry can be understood in terms of duality for affine connections. We have already seen dual connections in the context of Hessian manifolds, but this duality can be defined for arbitrary affine connections.\footnote{The notion of conjugate connections is a central notion in information geometry and has been studied in depth (see,  e.g., \cite{fei2017interaction,calin2009generalizations,matsuzoe2010statistical}).}

\begin{definition} \label{dual connection}
Given a Riemannian manifold with an affine connection $D$, the dual connection $D^\ast$ is the unique connection which satisfies
 \[ 
Z(g(X,Y)) = g(D_Z X, Y) + g(X, D^\ast_Z Y) ,
\] 
for vector fields $X,Y,$ and $Z$.
\end{definition}

Let us recall some basic properties of conjugate connections.
\begin{enumerate}
    \item The dual connection $D^\ast$ is curvature-free if and only if the primal connection $D$ is curvature-free. 
    \item  $D$ and $D^\ast$ have the same torsion tensor if and only if the quantity
    \[ C^D :=D g \]  is a totally symmetric three-tensor.
    That is to say, if
 \[ C^D (X,Y,Z) \equiv (D_Z g)(X, Y) \]
for any three vector fields on $\M$. In this case, we say that the pair
    $(g, D)$ is \emph{Codazzi-coupled}.
    \item A straightforward computations shows that
$ C^{D^\ast} = - C^D, $ so $(g, D)$ is Codazzi-coupled if and only if $(g, D^\ast)$ is. 
\end{enumerate}

\subsubsection{General construction}

To define non-K\"ahler version statistical mirror symmetry, we consider an affine Riemannian manifold $(\M,g,D)$ with a flat connection $D$, but do not assume that $g$ is a Hessian metric (that is, $g$ cannot be written as $D^2 \Phi$ for some potential function $\Phi$). In this case, the connection $D^\ast$ will be curvature-free, but will have torsion.\footnote{Note that if $g$ is not Hessian,
$D$ and $g$ cannot be Codazzi coupled, because this would imply that the dual connection $D^{\ast}$ is flat.} It is worth noting that such manifolds are not statistical manifolds, since statistical manifolds are defined to be Riemannian manifold $(\M,g)$ with an affine connection $D$ such that both $D$ and $D^\ast$ are torsion-free \cite{lauritzen1987statistical}. Instead, these are examples of \emph{statistical manifold admitting torsion} \cite{kurose2007statistical}.

A manifold $(\M, g, D, D^\ast)$ where (i) $D$ is flat (both curvature- and torsion-free) and (ii) $D^\ast$ is curvature-free but has non-zero torsion is said to be \emph{partially flat} \cite{henmi2019statistical}.\footnote{In this paper, the authors used the convention that $D^\ast$ is the flat connection, not $D$. However, to be more consistent with the broader literature on mirror symmetry, we chose to flip this definition in our previous work.}

For any partially flat manifold, there are a pair of almost-Hermitian structures on the tangent space $T \M$, with the first being a Hermitian manifold $\mathbb{M}$ constructed from $D$ and the second an almost K\"ahler manifold $\mathbb{W}$ constructed from $D^\ast$. These spaces are said to be in mirror correspondence, and provide a non-K\"ahler generalization of statistical mirror symmetry. 
To construct these spaces, we again consider coordinates $x= \{ x^1, \cdots, x^n\}$ and the associated bundle coordinates
\[ \phi \left(\left. y^{i} \frac{\partial}{\partial x^{i}}\right|_{p}\right)=\left(x^{1}(p), \ldots, x^{n}(p), y^{1}, \ldots, y^{n}\right). \]

Given an affine connection $D$ on $\M$ and a point $(x,y) \in T\M$, we construct two lifts $X^H$ and $X^V$ of any vector $X \in T_{x} \M$, which are vectors in $T_{(x,y)} (T\M)$. These are called the the {\it horizontal} and {\it vertical} lifts, from $T_{x}\M$ to $T_{(x,y)}(T\M)$, and are defined via the formulas
\begin{equation}
 X^V  =  X^i\frac{\partial}{\partial y^i}  \textrm{ \qquad and  \qquad }  X^H  =  X^i\frac{\partial}{\partial x^i} 
    - X^i y^j\Gamma^k_{ij}\frac{\partial}{\partial y^k}.
  \label{vertical and horizontal} 
\end{equation}

In this formula, the $\Gamma^k_{ij}$'s are the Christoffel symbols of $D$
with respect to the $x$-coordinates. In other words, if we consider the vectors $e_i = \frac{\partial}{\partial x^i}$, the $\Gamma^k_{ij}$ terms satisfy the identity
\[ D_{e_i} e_j = \Gamma^k_{ij} e_k. \]

 These lifts define a splitting of the tangent space $T_{(x,y)}(T\M)$ as
\[  T_{(x,y)}(T\M)=\mathcal{H}_{(x,y)}(T\M) \oplus\mathcal{V}_{(x,y)} (T\M),
\]
where the spaces $\mathcal{H}$ and $\mathcal{V}$ are the images of the horizontal and vertical lift, respectively.
Note that the vertical lift (and thus the vertical space\footnote{More conceptually, the vertical space is the kernel of the pushforward of the projection map $ \pi: T \M \to \M$.}) is independent of the choice of connection but the horizontal lift depends on the connection. We can then use this splitting to define an almost complex structure $J_D$ and almost-Hermitian metric $G_D$. 
\begin{enumerate}
    \item The almost complex structure $J_D$ is defined by
\[
J_D(X^H) = X^V, \,\,\,\,\,\,\,\,\, J_D(X^V) = -X^H.  \]
\item The metric on $T \M$ is lifted from the metric $g$ on $\M$ via the formula
\begin{eqnarray*}
  G_D \left(X^V,
Y^V
  \right) 
  = G_D \left(X^H, 
  Y^H
  \right)
  & = & g(X,Y), \\
  G_D \left(X^V, 
  Y^H
  \right)
  & = & 0,
\end{eqnarray*}
\end{enumerate}

It follows immediately that the Riemannian metric $G_D$ and the almost complex structure $J_D$ are orthogonal, which implies that the space $(T\M, J_D, G_D)$ is an almost Hermitian manifold.

\begin{definition}(Non-K\"ahler statistical mirror symmetry)
\label{NonKahler SMS definition}
Given a partially flat manifold $(\M, g, D, D^\ast)$, the almost-Hermitian manifolds 
\begin{enumerate}[label=(\roman*)]
    \item $\mathbb{M}= (T\M, J_D, G_D)$, and
    \item $\mathbb{W}= (T\M, J_{D^\ast}, G_{D^\ast})$
\end{enumerate}
are said to be a statistical mirror pair.
\end{definition}

To explain why this can be understood as a complex-to-symplectic duality, we first observe that 
 $\mathbb{M}$ is a Hermitian manifold (i.e., $J_D$ is integrable).
\begin{proposition} [Dombrowski \cite{dombrowski1962geometry}]
\label{prop_dombrowski}
The almost complex structure $J_D$ on $T\M$ is integrable if and only $D$ is flat.
\end{proposition}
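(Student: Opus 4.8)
The plan is to reduce the statement to the Newlander--Nirenberg theorem together with a direct computation of the Nijenhuis tensor of $J_D$. Recall that $J_D$ is integrable (i.e. comes from a genuine complex structure on $T\M$) if and only if its Nijenhuis tensor
\[ N_{J_D}(U,V) = [U,V] + J_D[J_D U, V] + J_D[U, J_D V] - [J_D U, J_D V] \]
vanishes identically. Since $N_{J_D}$ is tensorial and the lifts $\{e_i^H, e_i^V\}$ of a coordinate frame $e_i = \partial_{x^i}$ on $\M$ span $T_{(x,y)}(T\M)$ at every point, it suffices to evaluate $N_{J_D}$ on these frame fields, using $J_D(e_i^H) = e_i^V$ and $J_D(e_i^V) = -e_i^H$.

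The first step is to assemble the Lie brackets of the lifted frame fields. Starting from the coordinate expressions $e_i^V = \partial_{y^i}$ and $e_i^H = \partial_{x^i} - y^j \Gamma^k_{ij}\partial_{y^k}$ of Equation \eqref{vertical and horizontal}, short calculations give the three structure equations
\[ [e_i^V, e_j^V] = 0, \qquad [e_i^H, e_j^V] = (D_{e_i}e_j)^V, \qquad [e_i^H, e_j^H] = -\bigl(R(e_i,e_j)\,y\bigr)^V, \]
where $R$ denotes the curvature of $D$ and $y = y^k e_k$ is the tautological fiber vector. The first is immediate; the second follows by differentiating the Christoffel coefficient in the fiber variable; the third is where the curvature appears, assembled from the $x$-derivatives of the Christoffel symbols and their quadratic terms.

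Substituting these brackets into the Nijenhuis formula on the pairs $(e_i^V,e_j^V)$, $(e_i^H,e_j^V)$ and $(e_i^H,e_j^H)$ and simplifying, the bookkeeping collapses to
\[ N_{J_D}(e_i^V,e_j^V) = T(e_i,e_j)^H + \bigl(R(e_i,e_j)y\bigr)^V, \qquad N_{J_D}(e_i^H,e_j^V) = T(e_i,e_j)^V - \bigl(R(e_i,e_j)y\bigr)^H, \]
together with $N_{J_D}(e_i^H,e_j^H) = -N_{J_D}(e_i^V,e_j^V)$, where $T$ is the torsion of $D$. Because the horizontal and vertical components are independent, and the curvature term is linear in $y$ (hence nonzero for some $y$ whenever $R \neq 0$), $N_{J_D}$ vanishes identically precisely when $T \equiv 0$ and $R \equiv 0$, i.e. exactly when $D$ is flat in the sense used in this paper. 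Combined with Newlander--Nirenberg, this establishes the equivalence.

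The main obstacle is the third structure equation $[e_i^H, e_j^H] = -(R(e_i,e_j)y)^V$: one must carefully expand the bracket of two horizontal fields, track the $x$-derivatives and the products of Christoffel symbols, and recognize the resulting fiber-linear expression as minus the curvature operator applied to the tautological vector $y$. The torsion contribution, by contrast, is already detected along the zero section $y=0$, where the curvature terms drop out; this gives a clean conceptual check that both curvature and torsion must vanish, and it explains why in the partially-flat setting of Definition \ref{NonKahler SMS definition} the structure $J_{D^\ast}$ fails to be integrable exactly because $D^\ast$ carries torsion.
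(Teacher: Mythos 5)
Your proof is correct. Note, though, that the paper offers no proof of this proposition at all: it is stated as a citation to Dombrowski's 1962 paper, and your argument is essentially Dombrowski's original one — compute the brackets of horizontal and vertical lifts of a coordinate frame, obtaining $[e_i^V,e_j^V]=0$, $[e_i^H,e_j^V]=(D_{e_i}e_j)^V$, $[e_i^H,e_j^H]=-(R(e_i,e_j)y)^V$, and read off from the Nijenhuis tensor that integrability forces both the torsion (visible already on the zero section) and the curvature (visible from the fiber-linear term) to vanish. I checked the signs in your structure equations and in the three evaluations of $N_{J_D}$; they are right, and your reading of ``flat'' as torsion- and curvature-free matches the paper's convention, which is what makes the statement consistent with the later claim that $J_{D^\ast}$ fails to be integrable when $D^\ast$ is curvature-free but carries torsion.
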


In fact, we can construct holomorphic coordinates in the same way we did previously; by considering affine coordinates $x$ associated to $D$ and considering the coordinates  \[z= \left \{ z^1,\ldots,z^n ~|~ z^i= x^i+\sqrt{-1} y^i \right \}.\] 

On the mirror side, the following proposition shows that $\mathbb{W}$ is symplectic.
 
\begin{proposition}[Satoh \cite{satoh2007almost}]
\label{prop_sato}
The fundamental form $\omega^\ast = G_{D^\ast}(J_{D^\ast} \cdot,  \cdot)$ is closed (i.e., satisfies $d\omega^\ast = 0$) if and only if $D$ is torsion-free. 
\end{proposition}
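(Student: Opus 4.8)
The plan is to compute $d\omega^\ast$ directly in the horizontal/vertical frame adapted to $D^\ast$ and to extract the closedness condition component by component. Fix affine coordinates $x=\{x^i\}$, write $e_i = \partial/\partial x^i$ and $g_{ij}=g(e_i,e_j)$ (a function of $x$ alone), and denote the Christoffel symbols of $D$ and $D^\ast$ by $\Gamma^k_{ij}$ and $\Gamma^{\ast k}_{ij}$. From the definitions of $J_{D^\ast}$ and $G_{D^\ast}$, the fundamental form is pinned down on the lifted frame by
\[ \omega^\ast(e_i^H, e_j^V) = g_{ij}, \quad \omega^\ast(e_i^V, e_j^H) = -g_{ij}, \quad \omega^\ast(e_i^H, e_j^H)=\omega^\ast(e_i^V, e_j^V)=0. \]
To apply the invariant formula for $d\omega^\ast$ on triples of lifts, I would first record the bracket relations (cf. Dombrowski): $[e_i^V,e_j^V]=0$, $[e_i^H,e_j^V]=(D^\ast_{e_i}e_j)^V=\Gamma^{\ast k}_{ij}e_k^V$, and $[e_i^H,e_j^H]=-\bigl(R^{D^\ast}(e_i,e_j)y\bigr)^V$, the last being purely vertical and controlled by the curvature of $D^\ast$.

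Next I would evaluate $d\omega^\ast$ on the four types of triples. The $(V,V,V)$ and $(H,V,V)$ triples vanish identically, since every surviving term either pairs two vertical vectors under $\omega^\ast$ or differentiates the $y$-independent functions $g_{ij}$ in a vertical direction. The decisive case is $(H,H,V)$: expanding the invariant formula, the curvature bracket drops out (it meets $e_k^V$ in an $\omega^\ast(V,V)$ pairing), and one is left with
\[ d\omega^\ast(e_i^H,e_j^H,e_k^V) = \partial_i g_{jk} - \partial_j g_{ik} - \Gamma^{\ast l}_{ik}g_{lj} + \Gamma^{\ast l}_{jk}g_{li}. \]
Inserting the coordinate form of the conjugacy relation \eqref{conjugateconnection}, namely $\partial_i g_{jk} = \Gamma^l_{ij}g_{lk} + \Gamma^{\ast l}_{ik}g_{lj}$, all $\Gamma^\ast$ terms cancel by symmetry of $g$, and the expression collapses to $(\Gamma^l_{ij}-\Gamma^l_{ji})g_{lk}=T^l_{ij}g_{lk}$, where $T$ is the torsion of $D$. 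As $g$ is nondegenerate, this component vanishes for all $i,j,k$ if and only if $T\equiv 0$. This already delivers the ``only if'' direction of the equivalence.

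The remaining $(H,H,H)$ triple is what I expect to be the main obstacle. Here only curvature brackets survive, and the invariant formula produces the contraction of $y$ against a cyclic combination of the curvature of $D^\ast$. The key manoeuvre is to transport this back to $D$ by means of the standard conjugacy identity for curvatures, $g(R^{D^\ast}(X,Y)Z,W) = -g(Z,R^{D}(X,Y)W)$ (the same identity underlying the fact, recalled above, that $D$ and $D^\ast$ are simultaneously curvature-free). After this substitution and one use of the antisymmetry $R^D(X,Y)=-R^D(Y,X)$, the $(H,H,H)$ component becomes $g\bigl(e_a,\,\mathfrak{S}_{ijk}R^D(e_i,e_j)e_k\bigr)$ contracted with $y^a$, where $\mathfrak{S}$ denotes the cyclic sum. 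The first Bianchi identity for a connection with torsion expresses this cyclic sum through torsion terms alone, so it vanishes precisely when $D$ is torsion-free. Consequently, once $D$ is torsion-free the $(H,H,H)$ component vanishes automatically, completing the ``if'' direction, and the equivalence holds with no flatness of $D$ required. The only delicate bookkeeping is in the curvature-duality substitution and the sign conventions of the Bianchi identity, which is why I single out $(H,H,H)$ as the crux.
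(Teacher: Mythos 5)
Your computation is correct, and it is essentially the standard proof of Satoh's result; the paper itself states this proposition as a citation and supplies no proof, so there is nothing internal to compare against. Your frame, bracket relations, and the evaluation of the $(H,H,V)$ component all check out: substituting the coordinate form of the conjugacy relation does collapse that component to $g\bigl(T^D(e_i,e_j),e_k\bigr)$, and nondegeneracy of $g$ gives the ``only if'' direction. One remark on the $(H,H,H)$ case, which you single out as the crux: in the setting where the paper invokes this proposition, $D$ is assumed flat (the manifold is partially flat), so by the curvature-conjugacy identity $R^{D^\ast}=0$ and the $(H,H,H)$ component vanishes identically --- no Bianchi identity is needed. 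Your more careful route through $g(R^{D^\ast}(X,Y)Z,W)=-g(Z,R^D(X,Y)W)$ and the first Bianchi identity with torsion is also correct (the bracket formula $[e_i^H,e_j^H]=-(R^{D^\ast}(e_i,e_j)y)^V$ remains valid for torsionful connections when coordinate fields are used), and it buys you the sharper statement that the equivalence holds for an arbitrary affine connection, not just in the partially flat case; this is in fact the generality of Satoh's original theorem. The only caution is that the ``if'' direction genuinely needs both the $(H,H,V)$ and $(H,H,H)$ components, so the order of your argument --- first extract $T^D=0$ is necessary, then show it is also sufficient by killing the curvature cyclic sum --- is the right one.
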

Furthermore, the symplectic form on $\mathbb{W}$ is the pull-back of the canonical symplectic form on the cotangent bundle $T^\ast \M$. 
However, since $D^\ast$ has non-vanishing torsion, these results show that $\mathbb{M}$ is non-K\"ahler (since $d \omega \neq 0 $) and that the complex structure on $\mathbb{W}$ is not integrable.


\subsubsection{Non-K\"ahler statistical mirrors for parametrized families}


For any exponential family, it is possible to use the natural and expectation parameters to construct a K\"ahler statistical mirror pair. However, exponential families are a special class of statistical manifolds; for more general parametrized families, it is generally not possible to find a K\"ahler mirror pair (see Section \ref{Existence of Hessian metrics} for details). As such, non-K\"ahler statistical mirror symmetry is useful to understand statistical manifolds which are not exponential families.

In most cases of interest, the parameter space of a family of probability distributions is an open domain in Euclidean space, so most statistical manifold can be induced with a partially-flat structure. For each global set of coordinates on the domain, there is an associated flat connection which can be used to construct a mirror pair (which will generally be non-K\"ahler). As such, in order to find a canonical geometry, it seems necessary to impose additional conditions on the geometry of $\mathbb{M}$ and $\mathbb{W}$. In \cite{zhang2020statistical}, we proposed one possibility, which is to find a flat connection on $\M$ so that $\mathbb{M}$ is balanced (i.e., satisfies $d \omega^{n-1}=0$). The appeal of this geometric condition is that associated system of PDEs is determined (i.e., has the same number of unknowns as equations) and that (conformal) balancing arises in the Strominger system, which has been studied in theoretical physics \cite{garcia2016lectures}. 
 However, it is quite possible that statistical considerations will motivate a different geometric condition entirely.

\begin{question}
Given a parametrized family of distributions which is not an exponential family (and thus has no natural parameters), is there a natural condition to impose on the parametrization so that the associated complex manifold $\mathbb{M}$ has some desirable statistical properties? 
\end{question}

\section{More examples of hyperbolic statistical mirrors}
\label{Hyperbolic}

In Section \ref{Normaldistributions}, we saw that the moduli space of univariate normal distributions induces a mirror correspondence between two K\"ahler manifolds, one of which was a complex space form and the other of which was a K\"ahler-Ricci soliton. It might initially be tempting to think of this as being the ``canonical" hyperbolic statistical manifold.
 However, it turns out that this example is not unique, in that there are many other examples of statistical mirrors whose underlying statistical manifold has constant negative sectional curvature. It is even possible to find other examples where one of the K\"ahler metrics is a space of constant negative holomorphic sectional curvature. In this section, we will provide another such example, which is induced from the moduli space of negative trinomial distributions.
 We will also provide one further example of a statistical manifold $\M$ where the underlying Hessian metric is hyperbolic, which is derived from the family of inverse Gaussian distributions.

\subsection{The geometry of the negative trinomial distribution (NegTri)}
\label{negtrisection}

One of the common distributions one encounters in statistics is the binomial distribution, which is the distribution of heads when one performs $k$ independent flips of a coin which is heads with probability $p$ and tails with probability $1-p$. Conversely, one might flip such a coin repeatedly until encountering the first tail. The number of heads that appear before the first tail will be distributed according to the \textit{negative binomial distribution}. The negative multinomial distribution generalizes this distribution by having multiple success events rather than a single one. For example, if one roles a die repeatedly until a one appears, the distribution of the numbers two through five will be given by a negative multinomial (in this case, a negative sextinomial). 
More precisely, we consider repeated independent draws of a fixed multinomial distribution where one event is considered a failure event. The negative multinomial is the distribution of the number of draws of each of the other events (i.e. the successes) before the first failure occurs.\footnote{This can be generalized to allow for a specified number of failures $r$, but for simplicity, we will only consider the case where $r=1$.} This distribution is supported on the discrete space $ \left( \mathbb{Z}_{ \geq 0} \right)^{n-1} $ and the probability mass function is
\[ f(k_1, \ldots, k_{n-1} ~|~ p_1, \ldots,  p_{n-1}) = \Gamma\!\left(\sum_{i=0}^{n-1}{k_i}\right) \prod_{i=1}^{n-1}{\frac{p_i^{k_i}}{k_i!}}. \]
Here, $\Gamma$ is the gamma function and the $p_i$ are the probability of each of the success events. The zero-th event is considered the failure event and has probability $p_0.$

For the sake of concreteness, we will restrict our attention to the negative trinomial (i.e., where there are two success events and one failure event). However, all of the calculations in this section can be extended to the negative multinomial. As we did with normal distributions, we consider the moduli space of negative trinomial distributions as a statistical manifold, and compute its Fisher-Rao metric.

$$ 
g = \frac{1}{\left(1- p_1 - p_2 \right)^2} \left[
\begin{array}{cc}
p_1 (1-p_2)   & p_1 p_2   \\
p_1 p_2  & p_2 (1-p_1) \\
\end{array}
\right] .
$$
A straightforward calculation shows that this metric has constant curvature $-1/2$, and so we see that $\M_{\tt NegTri}$ is hyperbolic. However, unlike the Gaussian family, this metric is not complete. As such, the space of negative trinomials is a proper subset of hyperbolic space.
 To determine the global geometry, it is helpful to reparametrize the family in terms of the parameters $s_1 = \sqrt{p_1}$ and $s_2= \sqrt{p_2}$. In these new coordinates, the Fisher metric for the negative trinomial is
\[ g_{i j}=\frac{\delta_{i j}}{2(1-s_{1}^{2}-s_{2}^{2})}+\frac{s_{i} s_{j}}{2\left(1-s_{1}^{2}-s_{2}^{2}\right)^{2}},
\]
which is precisely the Klein disk model for hyperbolic space restricted to the first quadrant. 
This gives a picture of the global geometry of $\M_{\tt NegTri}$, at least from the the perspective of Riemannian geometry. From this, we see that the metric is unbounded, and that the boundaries of the manifold occur where the probability of the failure event approaches one.

The symmetry group of $\M_{\tt NegTri}$ is $\mathbb{Z}_2$, and is generated by exchanging the probabilities of the first and second success event.\footnote{For the negative multinomial, the symmetry group is $S_{n-1}$ and is induced by permuting success events.} This is a Fisher-Chentsov isometry, as it is induced by a map of the underlying random variable. To show that there are no other isometries of $\M_{\tt NegTri}$, note that these are the only isometries of the Klein disk (with its hyperbolic metric) which preserve the first quadrant. 

\subsubsection{Negative trinomials as an exponential family}

The family of negative trinomial distributions is an exponential family. The natural parameters of this family are
\begin{equation}\label{negtrinaturalparameters}
    \theta^1= \log(p_1) , \,\,\,\,\,\,\,\, \theta^2 =  \log(p_2) ,
\end{equation}
which is defined on the domain
\begin{equation} \label{negtriprimaldomain}
    \Omega_{\tt NegTri} = \left\{ (\theta^1,\theta^2) \in \R^2 ~|~ \exp(\theta^1)+\exp(\theta^2) < 1 \right\}
\end{equation}
The potential (log-partition) function is
\begin{equation} \label{Negtrilogpartition}
    \Phi_{\tt NegTri}(\theta)= -\log \left( 1-e^{\theta^1}-e^{\theta^2} \right).
\end{equation}
The corresponding sufficient statistics are
\begin{equation}\label{negtrisufficientstatistics} \eta_1 = \frac{p_1}{1-p_1 - p_2}, \,\,\,\,\,\,\,\, \eta_1 = \frac{p_2}{1-p_1-p_2}, 
\end{equation}
which is defined on the set 
\begin{equation} \label{negtridualdomain}
    \Omega^\ast_{\tt NegTri} = \{ (\eta_1,\eta_2) ~|~ \eta_1,\eta_1 > 0 \} .
\end{equation}

In order to transition from the $\theta$ to $\eta$ coordinates, we use the formula $\eta = D_\theta \Phi$:
\begin{equation} \label{negtritransitionmap}
\eta_1 =\frac{e^{\theta^1 }}{1 -e^{\theta^1 }-e^{\theta^2 }} ,   \,\,\,\,\,\,\,\, \eta_2 =\frac{e^{\theta^2 }}{1-e^{\theta^1 }-e^{\theta^2}}.  \end{equation}

\subsubsection{The K\"ahler geometry of $\mathbb{M}_{\tt NegTri}$}

As with the normal family, we can use the natural parameters and sufficient statistics of the negative trinomial distribution to construct a pair of statistical mirrors. Let us first consider the primal manifold $\mathbb{M}_{\tt NegTri}$.

This space is constructed on the tube domain $T \Omega_{\tt NegTri}$, which was the domain of the natural parameters \eqref{negtrinaturalparameters}. To obtain a K\"ahler metric on $T \Omega_{\tt NegTri}$, we compute the horizontal lift of the log-partition function $\Phi_{\tt NegTri}(\theta)$ given by \eqref{Negtrilogpartition}. We enumerate some of the important properties of $\mathbb{M}_{\tt NegTri}$ below.

\begin{enumerate}
\item It is incomplete (since $\M_{\tt NegTri}$ is incomplete).
\item It has constant negative holomorphic sectional curvature. This implies that it is K\"ahler-Einstein, has constant negative scalar curvature, negative orthogonal bisectional curvature and vanishing orthogonal anti-bisectional curvature.
\item As a space of constant holomorphic sectional curvature, it is locally (but not globally) holomorphically isometric to $\mathbb{W}_{\tt Normal}$.
\item It is biholomorphic to a tube domain whose base is  $\{ (\theta^1, \theta^2) ~|~ \exp(\theta^1)+ \exp(\theta^2) < 1 \}$. 
\end{enumerate}

\begin{question}
Is there a global isometric embedding from $\mathbb{M}_{\tt NegTri}$ to a Siegel domain?
\end{question}

In other words, is it possible to consider $\mathbb{M}_{\tt NegTri}$ as a subset of the Siegel domain? We suspect that the answer to this question is negative, but do not have a proof.\footnote{The moduli space of multinomial distributions induces an incomplete space of constant \textit{positive} holomorphic sectional curvature (see Subsection \ref{positively curved statistical mirrors}). This space has a natural immersion into $\mathbb{CP}^n$, but no embedding. It is likely that the same phenomena occurs with negative multinomials as well.}

\subsubsection{Properties of $\mathbb{W}_{\tt NegTri}$}

The dual manifold $\mathbb{W}_{\tt NegTri}$ is induced by the sufficient statistics \eqref{negtrisufficientstatistics} and is defined on the tube domain $T \Omega^\ast_{\tt NegTri}$. Its K\"ahler potential is given by the horizontal lift of the Legendre dual of the log-partition function, which is
\begin{equation} \label{NegtrilogpartitionLegendredual}\Phi_{\tt NegTri}^*(\eta) =   \eta_1 \log (\eta_1) + \eta_2 \log(\eta_2) - (1 + \eta_1 + \eta_2) \log( 1 + \eta_1 + \eta_2).
\end{equation}
We note some important properties of $\mathbb{W}_{\tt NegTri}$.

\begin{enumerate}
\item It is incomplete.
\item The Ricci curvature is negative. Showing this requires a bit of computation. To show this, we first consider the matrix
\[\Ric_{ij} = -\frac{\partial^2 \log[\det [D^2 \Phi_{\tt NegTri}^*(\eta)]}{\partial \eta_i \partial \eta_j}. \]
We then lift one of the indices and consider the matrix 
\[ \Ric_i^j = \Ric_{ik} \, g^{jk}. \]
The determinant of this matrix is
\[ \det[\Ric_i^j] = \frac{1+2 \left(\eta_1+  \eta_1\cdot \eta_1+ \eta_2+ \eta_2 \cdot \eta_2+ \eta_1 \cdot \eta_2\right)}{\eta_1 \eta_2(1+\eta_1+\eta_2)},\]
which is positive. On the other hand, the trace of $\Ric_i^j$ (i.e., half the scalar curvature), is negative.
\[ \Tr[ \Ric_i^j] = -3-\frac{1}{\eta_1}-\frac{1}{\eta_2}+\frac{1}{1+\eta_1+\eta_2}. \]
From this it follows that the eigenvalues of the Ricci curvature are negative, and that the scalar curvature is negative (but not constant).
\item Neither the orthogonal bisectional curvature nor the holomorphic sectional curvature have a sign.
\item It is biholomorphic to a tube domain whose base is the first quadrant, which is biholomorphic to the bidisk in $\mathbb{C}^2$. Although the domain admits a transitive group of automorphisms, most of the automorphisms are not isometries and the metric is not homogeneous . 
\item The spaces $\mathbb{M}_{\tt NegTri}$ and  $\mathbb{W}_{\tt NegTri}$ are not biholomorphic. To see this, note that \[ \Phi(\theta)^h = \left( e^{\theta^1}+e^{\theta^2}-1 \right)^h \] is a pluri-subharmonic exhaustion function on $T \Omega_{\tt NegTri}$ which is strictly pseudo-convex on the boundary $\partial T \Omega$. However, the bidisc is not biholomorphic to any strictly pseudo-convex domain.
\end{enumerate}

\subsubsection{Statistical mirror symmetry and biholomorphic isometries}

This example of statistical mirror symmetry, in conjunction with the one in Section \ref{Normaldistributions}, highlights a property of T-duality that is initially surprising. Upon first encountering the concept, one might imagine that if two semi-flat K\"ahler manifolds are locally biholomorphically isometric, that their dual spaces are also locally isometric. This is the case for compact semi-flat Calabi-Yau manifolds with their Ricci flat metrics, since such spaces are simply complex tori. Such an isometry need not extend globally, and the failure for a local isometry to extend can be seen at the length scale of the co-compact lattice.

However, if we drop the assumption that the metric is Ricci-flat (as done in \cite{fei2019anomaly}), then the situation becomes more complicated. In particular, it is possible to construct semi-flat metrics which have neighborhoods that are locally biholomorphically isometric, but whose dual spaces are not locally isometric. More precisely, if we quotient the fibers of $\mathbb{M}_{\tt NegTri}$ and  $\mathbb{W}_{\tt Normal}$ by a pair of co-compact lattices, the resulting spaces will be locally biholomorphically isometric, since they are spaces with constant holomorphic sectional curvature.\footnote{There are other examples of Hessian metrics whose $r$-map has constant holomorphic sectional curvature. For instance, if we consider the domain $
    \Omega = \{ (x^1, x^2) \in \mathbb{R}^2 ~|~ x^1-\exp(x^2) >0 \}$, the convex potential $ \Phi(x) = -\log(x^1-\exp(x^2)) $ gives another.}$\,$\footnote{As with the case of flat tori, the failure of a global isometry will be apparent at the length-scale of the co-compact lattices, since the dually-flat structures are very different. } On the other hand, $\mathbb{W}_{\tt NegTri}$ and $\mathbb{M}_{\tt Normal}$ are distinct as K\"ahler manifolds and remain so when quotiented by dual co-compact lattices. For instance, $\mathbb{M}_{\tt Normal}$ has constant scalar curvature whereas $\mathbb{W}_{\tt NegTri}$ does not. These spaces are non-compact (and cannot be quotiented to obtain compact examples), but it is possible to locally patch these potentials in a compact Hessian manifold (at the expense of deforming the potential elsewhere) to obtain semi-flat tori $\mathbb{M}_1$ and $\mathbb{M}_1$ which have biholomorphically isometric open subsets but whose T-duals $\mathbb{W}_1$ and $\mathbb{W}_1$ have no isometric neighborhoods.

\subsection{Inverse Gaussian distributions}

We now consider another exponential family whose underlying statistical manifold is hyperbolic, which is the family of inverse Gaussian distributions. This is the family of distributions whose probability density is given by \[ f_{\tt InvGau}(\zeta|~\lambda, \mu) = \sqrt\frac{\lambda}{2 \pi \zeta^3} \exp\left[-\frac{\lambda (\zeta -\mu)^2}{2 \mu^2 \zeta}\right]. \]
where $\lambda$ and $\mu$ are the shape parameter and mean, respectively. 
 As the name suggests, there is a relationship between this distribution and the Gaussian distribution. However, this connection requires some explanation since it involves stochastic processes.

We consider the stochastic process $X(t)$ given by 
\begin{eqnarray*}
X(0) = 0, \qquad \qquad
X(t)  = \nu t + W(t),
\end{eqnarray*}
where $W(t)$ is standard Brownian motion and $\nu$ is a fixed positive constant which denotes the speed of the drift. We then consider the arrival times $T_\alpha$ at a fixed level $\alpha >0$, which are given by 
\[ T_\alpha= \inf \{ t>0 ~|~ X(t) = \alpha \} \] and are distributed as \[ T_\alpha \sim {\tt InvGau} \left( \frac{\alpha}{\nu},\left(\frac{\alpha}{\nu} \right)^2 \right). \]
For any pair of times, $W(t) - W(s)$ will be normally distributed, which gives the connection between inverse Gaussian distributions and Gaussian distributions.

As before, we consider the moduli of inverse Gaussian distributions as a statistical manifold, and calculate its Fisher metric. In the $\mu, \lambda$ coordinates, this is given by 
$$
g =  \left[
\begin{array}{cc}
\frac{1}{2 \lambda^2}  & 0  \\
 0 & \frac{\lambda}{2 \mu^3}  \\
\end{array}
\right] 
$$

When we compute the curvature, we see that $\mathcal{M}_{\tt InvGau}$ has constant negative curvature, so is hyperbolic. Furthermore, the metric is unbounded but incomplete. To understand the global geometry of this space, it is helpful to change of coordinates to $$
x^1= \mu^{-1/2} \,\,\, \mbox{and} \,\,\,  x^2= \lambda^{-2}. $$ In the $(x^1,x^2)$ coordinates, $\mathcal{M}_{\tt InvGau}$ is the standard half-plane model restricted to the first quadrant.

\subsubsection{Symmetries of the moduli of inverse Gaussian distributions}

By studying the geometry of the moduli of inverse Gaussians, it is possible to prove new statistical results. For instance, the previous calculations allow us to understand the symmetries of the inverse Gaussian family.

\begin{theorem}
The isometry group of the inverse Gaussian family is the infinite dihedral group $\mathbb{R} \ltimes \mathbb{Z}_2$ generated by the mappings 
$$(\mu^{-1/2}, \lambda^{-2}) \mapsto (\alpha \mu^{-1/2}, \alpha \lambda^{-2})$$ for $\alpha > 0$ and inversions in the $(x^1,x^2)$-coordinates around a circle centered at the origin.
\end{theorem}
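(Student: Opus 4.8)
The plan is to work in the $(x^1,x^2)$-coordinates, where (as established above) $\M_{\tt InvGau}$ is the first quadrant $Q = \{x^1 > 0,\ x^2 > 0\}$ carrying the restriction of the hyperbolic metric on the upper half-plane $\mathbb{H} = \{x^2 > 0\}$. Writing $z = x^1 + \sqrt{-1}\,x^2$, the topological boundary of $Q$ inside $\mathbb{H}$ is exactly the complete geodesic $\gamma = \{x^1 = 0\}$ (the imaginary axis), whose ideal endpoints are $0$ and $\infty$. I would proceed in three steps: (i) check that the two claimed families really are isometries preserving $Q$; (ii) show that every isometry of $Q$ extends to an isometry of $\mathbb{H}$ carrying $Q$ to itself; and (iii) classify those extensions, which will turn out to be precisely the claimed generators.

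For (i), the dilations $z \mapsto \alpha z$ with $\alpha > 0$ are hyperbolic isometries fixing $\{0,\infty\}$ and preserve $\{x^1 > 0\}$; in the statistical coordinates this is $(\mu^{-1/2},\lambda^{-2}) \mapsto (\alpha\mu^{-1/2},\alpha\lambda^{-2})$. Inversion in the circle $|z| = \sqrt{k}$ is the map $z \mapsto k/\bar z = kz/|z|^2$; since this has the same sign of real and imaginary parts as $z$ it preserves $Q$, and as inversion in a circle orthogonal to $\mathbb{R}$ it is a hyperbolic isometry. A one-line computation gives $(z \mapsto k_1/\bar z)\circ(z \mapsto k_2/\bar z) = (z \mapsto (k_1/k_2)\,z)$ and $(z \mapsto k/\bar z)\circ(z \mapsto \alpha z)\circ(z \mapsto k/\bar z) = (z \mapsto z/\alpha)$, so the dilations form a normal $\mathbb{R}$ on which any single inversion acts by inversion, exhibiting the group generated as $\mathbb{R} \ltimes \mathbb{Z}_2$.

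Step (ii) is where I expect the real content to lie. Because $\mathbb{H}$ is a complete, simply connected surface of constant curvature, an isometry $f : Q \to Q$ extends uniquely to a global $\Phi \in \mathrm{Isom}(\mathbb{H})$: fix $p \in Q$, and use that $\mathrm{Isom}(\mathbb{H})$ acts simply transitively on orthonormal frames to produce the unique $\Phi$ with $\Phi(p) = f(p)$ and $d\Phi_p = df_p$. Then the set $A = \{q \in Q : f(q) = \Phi(q),\ df_q = d\Phi_q\}$ is nonempty and closed, and it is open because at any $q \in A$ both $f$ and $\Phi$ carry each geodesic through $q$ to the geodesic of $\mathbb{H}$ with identical initial data, so they agree on a normal ball by uniqueness of geodesics. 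Connectedness of $Q$ forces $A = Q$, i.e. $f = \Phi|_Q$, and since $f$ is onto $Q$ we obtain $\Phi(Q) = Q$.

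Finally, for (iii): as a homeomorphism of $\mathbb{H}$ with $\Phi(Q) = Q$, the extension $\Phi$ fixes the boundary geodesic, $\Phi(\gamma) = \gamma$, hence permutes the ideal endpoints $\{0,\infty\}$. The stabilizer of $\gamma$ in $\mathrm{Isom}(\mathbb{H})$ has four components, according to whether $\Phi$ fixes or swaps $\{0,\infty\}$ and whether it preserves or reverses orientation; representatives are $z \mapsto \alpha z$, $z \mapsto -\alpha\bar z$, $z \mapsto -k/z$, and $z \mapsto k/\bar z$. Examining the sign of $\mathrm{Re}(z)$ shows that $z \mapsto -\alpha\bar z$ and $z \mapsto -k/z$ both send $\{x^1 > 0\}$ into $\{x^1 < 0\}$ and so cannot preserve $Q$, while the dilations and the origin-centered inversions do. Thus the isometries preserving $Q$ are exactly the generators of step (i), giving $\mathrm{Isom}(\M_{\tt InvGau}) \cong \mathbb{R} \ltimes \mathbb{Z}_2$. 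The main obstacle is the rigidity/extension argument in (ii); once that is granted, (iii) is an elementary case check on the four-component geodesic stabilizer.
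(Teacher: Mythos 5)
Your proof is correct and follows essentially the same route as the paper: reduce to classifying the isometries of the hyperbolic half-plane that stabilize the bounding geodesic $x^1=0$, then check which of the four components of that stabilizer preserve the first quadrant. The only difference is that you supply the extension/rigidity argument (step (ii)) that the paper's one-line proof takes for granted when it says ``it suffices to compute the symmetries of the hyperbolic half-plane which fix the line $x^1=0$''; this is a worthwhile addition but not a different method.
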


\begin{proof} To see this, it suffices to compute the symmetries of the hyperbolic half-plane which fix the line $x^1 = 0$. This is an infinite dihedral group, generated by scaling and inversions around a circle centered at the origin.
\end{proof}
The fact that scaling generates a symmetry has a natural interpretation as it corresponds to scaling the drift $\nu$ and the level $\alpha$ for the stochastic process $X(t)$. This symmetry is not Fisher-Chentsov, but its physical meaning is clear. However, we are not aware of a physical interpretation for the isometry given by inversions.


\begin{question}
Is there a natural geometric interpretation for the symmetry of the moduli of inverse-Gaussians induced by inversions?
\end{question}

\subsubsection{Properties of $\mathbb{M}_{\tt InvGau}$}

Inverse Gaussian distributions are another example of an exponential family. As such, we can use their moduli space to construct a K\"ahler statistical mirror pair. To do so, we consider the natural parameters 
\begin{eqnarray*}
\theta^1 &=& \frac{\lambda}{2 \mu^2} , \\
\theta^2 &=& - \frac{\lambda}{2} 
\end{eqnarray*}
and use the connection associated with these coordinates. In this case, the log-partition function is given by 
$$ \Phi( \theta) = -\sqrt{  \theta^1  \theta^2} - \frac{1}{2} \log ( -  \theta^2) , $$ 
which is supported on $\Omega = \{  \theta^1,  \theta^2 < 0 \}$.
 The K\"ahler manifold $\mathbb{M}_{\tt InvGau}$ has the following properties.

\begin{enumerate}
\item It has negative Ricci curvature (and hence negative scalar curvature).
\item The holomorphic sectional curvature, antibisectional curvature and orthogonal bisectional curvatures do not have a sign.
\item $\mathbb{M}_{\tt InvGau}$ a tube domain whose base is the third quadrant. As such, it is biholomorphic to the bidisk in $\mathbb{C}^2$.
\end{enumerate}

\subsubsection{Properties of $\mathbb{W}_{\tt InvGau}$}

To construct the dual K\"ahler manifold, we consider the dual connection whose associated affine coordinates are  
\begin{eqnarray*} \eta_1 &=& \frac{ \partial \Phi}{\partial \theta^1}  = - \frac{1}{2} \sqrt{ \frac{\theta^2}{\theta^1}} \\ \eta_2 &=& \frac{ \partial \Phi}{\partial \theta^2} = - \frac{1}{2} \sqrt{ \frac{\theta^1}{\theta^1}} - \frac{1}{\theta^2}. 
\end{eqnarray*}

The Hessian potential in these coordinates is is 
$$ \Phi^\ast(\eta) = \frac{1}{2} \left[ -1 + \log \left( \frac{2 \eta_1}{-1+4  \eta_1  \eta_2} \right) \right] $$ which is defined on the hyperbola  $\Omega^\ast = \{ (\eta_1, \eta_2) \in \R^2 ~|~ -1+ 4  \eta_1 \cdot  \eta_2>0, \eta_1 > 0 \}$. The  K\"ahler manifold $\mathbb{W}_{\tt InvGau}$ is defined on the tube domain $T \Omega^\ast$ and has the following properties.

\begin{enumerate}
\item It has negative Ricci curvature (and so negative scalar curvature).
\item It has negative orthogonal anti-bisectional curvature.
\item Neither the orthogonal bisectional curvature nor the holomorphic sectional curvature have a sign.
\end{enumerate}

\section{Frobenius manifolds and their statistical mirrors}
\label{Flat}

Thus far, we have considered examples where the underlying statistical manifold has negative curvature. In this section, we turn our attention to statistical manifolds which are Riemannian-flat and consider their sttatistical mirrors. The primary motivation for studying such spaces is that they induce solutions to the Witten–Dijkgraaf–Verlinde–Verlinde equations (WDVV equations), which play an important role in mathematical physics.

\subsection{WDVV equations}

The WDVV equations are the system of equations
\begin{equation} \label{WDVVequations}
\sum_{p,q} \Phi^{pq} \Phi_{jlp} \Phi_{ikq} = \sum_{p,q}  \Phi_{ilp} \Phi_{jkq} \Phi^{pq}, 
 \end{equation}
 which are indexed in the variables $i,j,k$ and $\ell$. Here, we use the notation $\Psi_J$ to denote  \[ \frac{ \partial^{| J |} \Psi}{\partial u^J} \]
for a multi-index $J$ and use $\Psi^{ij}$ to denote the $i,j$-th component of the matrix inverse of the Hessian $\Psi_{\alpha \beta}$.
 
A solution of these equations give a flat Riemannian manifold $(\M,g)$ the structure of a \textit{Frobenius manifold}. In other words, a solution to these equations induces a commutative, associative product on the tangent bundle. For a myriad of applications in topological field theory and physics, see \cite{dubrovin1996geometry} \cite{magri2015wdvv} \cite{manin1999frobenius}. 
Furthermore, the WDVV equations are of interest from the perspective of information geometry due to the following correspondence.

\begin{obs*}[\cite{kito1999hessian}]
The potential of a Riemannian-flat Hessian metric solves the WDVV equations.
\end{obs*}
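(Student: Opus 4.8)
The plan is to work in the affine coordinates in which the flat connection $D$ has vanishing Christoffel symbols, so that the Hessian metric is simply $g_{ij}=\Phi_{ij}$, and to show that the Riemann curvature tensor of the Levi-Civita connection of $g$, written in these coordinates, is exactly proportional to the difference of the two sides of the WDVV equations \eqref{WDVVequations}. Riemannian-flatness then yields the identity at once. First I would record the Levi-Civita Christoffel symbols: since $\partial_i g_{jl}=\Phi_{ijl}$ is totally symmetric in its three indices, the standard formula collapses to $\Gamma^k_{ij}=\tfrac12 g^{kl}\Phi_{ijl}=\tfrac12\Phi^{kl}\Phi_{ijl}$.

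Next I would compute $R^l_{ijk}=\partial_i\Gamma^l_{jk}-\partial_j\Gamma^l_{ik}+\Gamma^l_{im}\Gamma^m_{jk}-\Gamma^l_{jm}\Gamma^m_{ik}$, using $\partial_i\Phi^{lm}=-\Phi^{la}\Phi^{mb}\Phi_{iab}$ for the derivative of the inverse Hessian. The key simplification is that the fourth-derivative contributions $\tfrac12\Phi^{lm}\Phi_{ijkm}$ coming from $\partial_i\Gamma^l_{jk}$ and $\partial_j\Gamma^l_{ik}$ are symmetric under $i\leftrightarrow j$ and therefore cancel, leaving an expression built purely from the third derivatives $\Phi_{ijk}$ and the inverse Hessian. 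Lowering the upper index with $g_{lp}=\Phi_{lp}$ produces four terms, each of the shape $\Phi^{pq}\Phi_{\bullet\bullet p}\Phi_{\bullet\bullet q}$.

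The final step is combinatorial: using the symmetry of $\Phi^{pq}$ together with the total symmetry of $\Phi_{ijk}$, I would relabel the contracted dummy indices to combine the four terms in pairs, arriving at
\[ R_{lijk}=\tfrac14\sum_{p,q}\Phi^{pq}\bigl(\Phi_{jlp}\Phi_{ikq}-\Phi_{ilp}\Phi_{jkq}\bigr). \]
The right-hand side is precisely one quarter of (LHS $-$ RHS) of \eqref{WDVVequations}; its antisymmetry under $i\leftrightarrow j$ matches both the antisymmetry of the Riemann tensor and that of the WDVV difference, which serves as a useful consistency check. Hence the hypothesis $R_{lijk}=0$ is \emph{equivalent} to $\Phi$ solving the WDVV equations, proving the statement (and incidentally the converse). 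I would note in passing that this also explains the structural reason behind the identity: the WDVV equations are the associativity conditions for the product with structure constants $c^k_{ij}=\Phi^{kl}\Phi_{ijl}$, and that associativity is encoded in the vanishing curvature.

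I expect the main obstacle to be the index bookkeeping in the last paragraph: correctly identifying which pairs of the four curvature terms recombine requires repeatedly invoking both the symmetry of $\Phi^{pq}$ and the full symmetry of $\Phi_{ijk}$, and a single mislabeled dummy index will spoil the clean factor of $\tfrac14$. By contrast, the cancellation of the fourth-derivative terms one step earlier is conceptually the crucial point—it is what forces the curvature, and hence the WDVV expression, to depend only on third derivatives—yet it is mechanically immediate once the total symmetry of $\Phi_{ijl}$ is observed.
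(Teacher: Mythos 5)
Your proposal is correct and follows essentially the same route as the paper: both reduce the observation to the formula $R_{ijkl}=-\tfrac14\sum_{p,q}\Phi^{pq}(\Phi_{jlp}\Phi_{ikq}-\Phi_{ilp}\Phi_{jkq})$ for the curvature of a Hessian metric in affine coordinates, which identifies Riemannian flatness with the WDVV equations. The only difference is that you carry out the Christoffel and curvature computation explicitly (correctly, including the cancellation of the fourth-derivative terms), whereas the paper simply cites this formula from the literature.
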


To see this, we can calculate the curvature of a Hessian metric in the associated affine coordinates (see  \cite{totaro2004curvature} for details). Doing so, we find the following:
\begin{equation}
 R_{i j k l}=-\frac{1}{4} \sum_{p, q} \Phi^{p q}\left( \Phi_{j l p} \Phi_{i k q}-\Phi_{i l p} \Phi_{j k q}\right).
\end{equation}

As a word of caution, this quantity is the curvature of the Levi-Civita connection on the underlying Hessian manifold, and not the curvature of the K\"ahler metrics on its tangent bundle (which involve fourth derivatives of the potential as well).
From this, we see that a convex function $\Phi$ solves the WDVV equations if and only if its associated Hessian metric is Riemannian-flat. This correspondence gives a geometric way to understand the WDVV-equations and to derive some of its properties.

For instance, it provides a particularly simple proof that the Legendre transformation of one solution to the WDVV-equations yields a second solution.

\begin{obs*}
Suppose that a convex potential $\Phi: \Omega \to \mathbb{R}$ for $\Omega \subset \mathbb{R}^n$ satisfies Equations \ref{WDVVequations}.
Then the Legendre dual $\Phi^*: \Omega^\ast \to \mathbb{R}$ also satisfies Equations \ref{WDVVequations}.
\end{obs*}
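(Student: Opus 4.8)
The plan is to exploit the correspondence recorded immediately above: a convex potential solves the WDVV equations if and only if its associated Hessian metric is Riemannian-flat. The crucial point is that $\Phi$ and its Legendre dual $\Phi^\ast$ are potentials for one and the same Riemannian metric $g$ on the underlying manifold $\M$, merely written in two dually related coordinate systems. Since Riemannian flatness is an intrinsic property of $g$, independent of any choice of coordinates, the WDVV equations for $\Phi$ and for $\Phi^\ast$ are each equivalent to the single condition that the Riemann curvature of $g$ vanishes, and hence equivalent to each other.

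First I would make precise that $\Phi^\ast$ is the Hessian potential of the \emph{same} metric in the dual coordinates $u_i = \partial \Phi / \partial x^i$. Differentiating the Legendre relation $\Phi^\ast(u) = \langle x, u \rangle - \Phi(x)$ gives $\partial \Phi^\ast / \partial u_i = x^i$, so that
\[ \frac{\partial^2 \Phi^\ast}{\partial u_i \partial u_j} = \frac{\partial x^i}{\partial u_j} = g^{ij}, \]
where the last equality uses that $\partial x / \partial u$ is the matrix inverse of $\partial u_i / \partial x^j = \Phi_{ij} = g_{ij}$. On the other hand, the tensorial transformation law for the metric from the $x$-chart to the $u$-chart produces components $\sum_{i,j} \frac{\partial x^i}{\partial u_a} \frac{\partial x^j}{\partial u_b} g_{ij} = g^{ab}$, which coincides exactly with $\partial^2 \Phi^\ast / \partial u_a \partial u_b$. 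This confirms that the $u$-chart endows $(\M, g)$ with a Hessian structure whose potential is $\Phi^\ast$ and whose metric is the identical Riemannian metric $g$; it is also precisely the content of the bi-orthogonality of the $x$ and $u$ coordinate systems.

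With this identification the argument closes immediately. By the preceding observation, the hypothesis that $\Phi$ satisfies Equation \eqref{WDVVequations} is equivalent to the vanishing of the Riemann curvature tensor of $g$. But curvature is a coordinate-free invariant, so it vanishes whether computed in the $x$-chart or in the $u$-chart. Applying the same correspondence in the dual direction, now to the potential $\Phi^\ast$ on $\Omega^\ast$, the flatness of $g$ is equivalent to $\Phi^\ast$ satisfying Equation \eqref{WDVVequations}, which is the desired conclusion.

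I expect the only real obstacle to be the bookkeeping of the second paragraph: one must check carefully that the Legendre-dual potential reproduces exactly the same metric tensor after the change of coordinates, rather than some rescaled or otherwise altered version. Once this duality computation is verified, the coordinate-invariance of curvature does all the remaining work, and no direct manipulation of the quartic system \eqref{WDVVequations} is ever required.
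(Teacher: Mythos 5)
Your proof is correct and follows essentially the same route as the paper: the paper likewise observes that $(\Omega,\Phi)$ and $(\Omega^\ast,\Phi^\ast)$ are two coordinate presentations of one and the same Riemannian manifold, so that WDVV for either potential is equivalent to the coordinate-free condition that $g$ is flat. Your explicit verification that $\partial^2\Phi^\ast/\partial u_i\partial u_j = g^{ij}$ matches the transformed metric components is exactly the bookkeeping the paper leaves implicit in the phrase ``isometric as Riemannian manifolds.''
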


To see this, note that the Hessian manifolds $(\Omega, \Phi)$ and $(\Omega^\ast, \Phi^\ast)$ are isometric as Riemannian manifolds (in fact, $\Omega$ and $\Omega^\ast$ can be interpreted as being two sets of coordinates on the same underlying manifold). Furthermore, we can use this perspective to simplify the WDVV equations in dimensions two and three.

\begin{proposition}
\begin{enumerate}
\item 
A convex potential $\Phi: \Omega \to \mathbb{R}$ for $\Omega \subset \mathbb{R}^2$ satisfies Equations \ref{WDVVequations} if and only if it satisfies the (single) equation
  \begin{equation} \label{Hessianscalar}
     \sum_{i,j,k, l,p,q} \Phi^{jl} \Phi^{ik} \Phi^{pq} \Phi_{jlp} \Phi_{ikq} = \sum_{i,j,k, l,p,q}  \Phi_{ilp} \Phi_{jkq} \Phi^{pq} \Phi^{jl} \Phi^{ik}.
  \end{equation} 
\item
  A convex potential $\Phi: \Omega \to \mathbb{R}$ for $\Omega \subset \mathbb{R}^3$ satisfies Equations \ref{WDVVequations} if and only if it satisfies the six equations (indexed symmetrically in $i$ and $k$)
     \begin{equation} \label{HessianRicci}
    \sum_{j, l,p,q}  \Phi^{jl} \Phi^{pq} \Phi_{jlp} \Phi_{ikq} =  \sum_{j, l,p,q}  \Phi_{ilp} \Phi_{jkq} \Phi^{pq} \Phi^{jl}.   \end{equation} 
\end{enumerate}
\end{proposition}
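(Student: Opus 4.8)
The plan is to leverage the identification, stated just above, between the WDVV system and the vanishing of the Riemannian curvature tensor $R_{ijkl} = -\frac14 \sum_{p,q}\Phi^{pq}(\Phi_{jlp}\Phi_{ikq} - \Phi_{ilp}\Phi_{jkq})$ of the Hessian metric $g = D^2\Phi$. Once this is in hand, both parts reduce to the observation that the two displayed equations are simply traces of the WDVV system: equation \eqref{Hessianscalar} is a constant multiple of the scalar curvature $S = \sum_{i,j,k,l}\Phi^{ik}\Phi^{jl}R_{ijkl}$, obtained by contracting the curvature identity with $\Phi^{jl}\Phi^{ik}$, while equation \eqref{HessianRicci} is a constant multiple of the Ricci tensor $R_{ik} = \sum_{j,l}\Phi^{jl}R_{ijkl}$, obtained by contracting with $\Phi^{jl}$ alone (using $g^{ik}=\Phi^{ik}$ since $g=D^2\Phi$). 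In each case the forward implication (WDVV $\Rightarrow$ the reduced equation) is immediate, since the reduced equation is a linear combination of the WDVV equations. The content lies entirely in the converse, which I would establish using the low-dimensional structure of the curvature tensor.

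For part (1), I would first record that $R_{ijkl}$, being the curvature of the Levi-Civita connection of $g$, enjoys the usual symmetries (antisymmetry in $i,j$ and in $k,l$, together with pair symmetry). In dimension two these symmetries force $R_{ijkl}$ to have a single independent component, so that the whole tensor is reconstructed from the scalar curvature via $R_{ijkl} = \tfrac{S}{2}(g_{ik}g_{jl} - g_{il}g_{jk})$. Hence $S = 0$ already forces $R_{ijkl} = 0$, which is the WDVV system. Since equation \eqref{Hessianscalar} is precisely $S=0$ (up to the factor $-4$), this completes part (1).

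For part (2), I would argue analogously but with the Ricci tensor in place of the scalar curvature. First I would check that the contraction $R_{ik} = \sum_{j,l}\Phi^{jl}R_{ijkl}$ is symmetric in $i$ and $k$; this follows from the pair symmetry of $R_{ijkl}$ together with the symmetry of $\Phi^{jl}$. In dimension three it therefore has exactly $\binom{4}{2}=6$ independent components, matching the claimed count of six equations indexed symmetrically in $i$ and $k$. The key input is then the classical fact that in dimension three the Weyl tensor vanishes identically, so that the full curvature tensor is recovered algebraically from the Ricci tensor; consequently $R_{ik} = 0$ forces $R_{ijkl} = 0$, i.e. the WDVV system. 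Since equation \eqref{HessianRicci} is exactly $R_{ik}=0$ up to the factor $-4$, this gives part (2).

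The routine but necessary bookkeeping, namely verifying that the displayed contractions really coincide with $S$ and $R_{ik}$ and confirming the index symmetries that produce the counts of one and six equations, is where the main care is required, although none of it is conceptually difficult. The genuine mathematical leverage comes entirely from the two standard facts that a two-dimensional Riemannian manifold is flat if and only if it is scalar-flat, and a three-dimensional one is flat if and only if it is Ricci-flat; the only point to double-check is that these apply here, which they do precisely because the Hessian curvature $R_{ijkl}$ is a bona fide Riemannian curvature tensor.
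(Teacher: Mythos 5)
Your proposal is correct and follows the same route as the paper: identify \eqref{Hessianscalar} with the vanishing of the scalar curvature of the Hessian metric and \eqref{HessianRicci} with the vanishing of its Ricci curvature, then invoke the facts that in dimension two the scalar curvature determines the full curvature tensor and in dimension three the Ricci curvature does. The paper's proof is terser, but the extra bookkeeping you flag (the curvature symmetries, the count of six components, and the fact that $R_{ijkl}$ from Totaro's formula really is the Riemannian curvature of $g$) is exactly the right set of things to check.
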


\begin{proof}
Equation \ref{Hessianscalar} is satisfied if and only if the scalar curvature of a Hessian metric vanishes. For surfaces, the Riemannian curvature is completely determined by the scalar curvature, so the metric is flat whenever the scalar curvature vanishes.

Similarly, Equations \ref{HessianRicci} are satisfied whenever the Ricci curvature of the associated Hessian metric vanishes. For three-folds, the Ricci curvature completely determines the Riemannian curvature, so the metric is flat whenever its Ricci curvature vanishes.
\end{proof}

\subsection{Examples}

As we will see in the next section, Frobenius structures exist in abundance. Here, we have included a partial list of such potentials. For each of these examples, the Legendre dual will induce a separate Frobenius structure as well.

\begin{enumerate}
    \item The simplest example of a Frobenius manifold is $\mathbb{R}^n$ with the quadratic potential
    \[ \Phi = \sum_{i=1}^n \frac{x^i \cdot x^i}{2}. \]
    This is the unique structure which is self-dual.
    \item Another example of a Frobenius structure is $\mathbb{R}^2$ with the Hessian potential
\[ \Phi(x^1,x^2) = \log( \cosh(x^1) + \cosh(x^2)). \]
\item Closely related to the previous example, we can replace the hyperbolic cosines for normal cosines and consider the potential
\[ \Phi(x^1,x^2) = - \log( \cos(x^1) + \cos(x^2)), \]
which is defined on a diamond in $\mathbb{R}^2$. The curvature of the associated K\"ahler manifold is opposite that of the previous example.
\item One well-known example is the domain
\[        \Omega = \{ x \in \mathbb{R}^2 ~|~ x^2 < |x^1|  \} \] with the potential  \[ \Phi(x^1,x^2) = - \log( x^1 \cdot x^1 - x^2 \cdot x^2 ). \]  
This example is of interest because it is possible to quotient this cone by co-compact automorphisms to construct a compact Hessian manifold. This example can be generalized to arbitrary dimensions by reparametrizing the coordinates as $\theta^1=x^1-x^2$ and $\theta^2=x^1+x^2$ and considering the more general potential
\[\Phi= - \sum_{i=1}^n \log(\theta^i) , \] which is defined in the positive cone
\[        \Omega = \{ \theta = (\theta^1, \cdots, \theta^n) \in \mathbb{R}^n ~|~ \theta^i >0  \}. \]

The $x$-parametrization indicates that there may be a connection between this metric and Siegel domains. More precisely, if we consider the potentials  \[ \Phi_\epsilon =  \log \left( \left( \epsilon x^1 +\epsilon^{-1} \right)^2 - \frac{1}{\epsilon^2} - x^2 \cdot x^2  \right),  \] it might be possible to obtain the complex ball as a suitably renormalized limit.

\begin{question}
Is there a precise sense in which the ball can be understood as the limit of cones? If so, are there any applications for this fact?
\end{question}

\end{enumerate}

\subsection{Positive curved statistical mirrors}
\label{positively curved statistical mirrors}
We have now studied statistical mirror symmetry in both the flat case and the hyperbolic case, so it is natural to consider positively curved Hessian manifolds as well. However, when one tries to do so, one encounters the immediate problem that any complete space of strongly positive sectional curvature is not affine.\footnote{To see this, note that Myer's theorem implies that the fundamental group of a manifold with strongly positive curvature is finite, whereas the fundamental group of a compact affine manifold must be infinite \cite{ay2002dually}.} Therefore, it is only possible to construct Hessian metrics on an open subset of spaces  of strongly positive sectional curvature.
In \cite{khan2020kahler}, the authors studied one such example, which was given by the Hessian potential 
\[\Phi(x) = \log \left( 1+ \sum_{j=1}^{n-1} e^{x^j}\right) , \]
which is defined on all of $\mathbb{R}^{n-1}$. This is the dually flat geometry of the probability simplex
$$
\Delta^n = \left\{ (p_1, \cdots, p_n) \in \R^n ~ |~ p_i \geq 0,\, \sum_{i=1}^{n} p_i = 1 \right\}. 
$$
This example is motivated by statistical considerations, since it induced by the exponential family of multinomial distributions. It also has applications in optimal transport and mathematical finance (see \cite{pal2016geometry, pal2018exponentially} for details).  
Interestingly, the K\"ahler metric associated to this potential has constant positive holomorphic sectional curvature and is defined on $\mathbb{C}^n$. Since it is incomplete, this does not contradict the fact that complete K\"ahler manifolds with positive constant holomorphic sectional curvature are biholomorphically isometric to $\mathbb{CP}^n$.

If one drops the assumption of strongly positive curvature, it may be possible to find examples of complete metrics which have positive curvature in some weaker sense. In \cite{khan2020geometry}, the authors and Zheng studied this question and constructed $O(n)$-symmetric metrics with non-negative orthogonal anti-bisectional curvature. However, we showed that there are no non-trivial $O(n)$-symmetric K\"ahler Sasaki metrics with positive bisectional curvature (or orthogonal bisectional curvature for $n \geq 3$).

\section{The existence theory for Hessian metrics}
\label{Existence of Hessian metrics}

Thus far, we have not discussed the existence theory for Hessian metrics in any detail. It is a natural question of whether, given a Riemannian manifold $(\M,g)$ with $\dim(\M) = n$, there exists a flat connection $D$ (either globally or locally) and a potential $\Phi$ so that \begin{equation}
    g = D^2 \Phi.
\end{equation}

From a global perspective, we can immediately see that in order for such a structure to exist, the manifold $\M$ must be affine. That is, it must admit a flat connection. This is a fairly restrictive condition from a topological perspective. For instance, the only compact orientiable surface which admits an affine structure is the torus.\footnote{In fact, the torus admits several affine structures which are inequivalent (see \cite{yagi1981hessian} for details).} However, merely admitting an affine structure is not sufficient for a manifold to admit a Hessian metric. For instance, the Hopf manifolds $S^n \times S^1$ (for $n >1$) do not admit such a structure \cite{yagi1981hessian}. 

It is also possible to study this question \emph{locally}. For this, one considers a Riemannian manifold $(\M,g)$ and a point $x \in \M$. The local question studies whether it is possible to find a connection $D$ which is flat in a neighborhood of $x$ and which satisfies $ D^2 \Phi =g$? 

When the dimension is greater than two, generic Riemannian metrics do not admit a Hessian metric, even locally. Intuitively, this can be seen by counting equations; a Hessian potential and flat connection can be specified by $n+1$ local functions ($n$ functions for the associated affine coordinates and a single function for the convex potential) whereas a generic Riemannian metric can be understood as a map from each point (in a local chart) to the space of $n \times n$ symmetric positive definite matrices, which has dimension $\frac{(n+1)n}{2}$. In fact, for $n \geq 4$, there are point-wise curvature obstructions to the existence of a Hessian metric (see \cite{amari2014curvature} for details). However, in dimension two, Amari and Armstrong proved that given a real analytic metric, it is always possible to find a Hessian structure locally.\footnote{Another proof of this fact was given by Bryant in a MathOverflow answer. \cite{Bryant2013Hessiantype}}

The complement to the existence question is uniqueness. In other words, given a Hessian manifold $(\M,g,D)$, is it possible to find another connection $D^\prime$ so that  $(\M,g,D^\prime)$ is a Hessian manifold? By considering the connection $D^\prime = D^\ast$, we can immediately see that Hessian structures are not unique. In fact, our examples show that there are potentially many different Hessian structures which induce the same Riemannian geometry.

The moduli space of Hessian metrics (for some particular Riemannian metrics) was studied by Kito \cite{kito1999hessian}. In particular, he considered the Euclidean plane and showed that the space of associated Hessian metrics has the freedom of three local functions of $\mathbb{R}$. He also showed that space of Hessian metrics for flat Euclidean $\mathbb{R}^n$ has at least the freedom of $n$ functions. In practice, this means that flat Hessian metrics (and thus solutions to the WDVV equations) exists in abundance. Furthermore, he considered hyperbolic $n$-space and showed that the space of Hessian metrics has at least the freedom of $n-1$ functions on $\mathbb{R}$. This observation naturally raises the following question.

\begin{question}
Are there other dually flat structures (other than the one from normal distributions) defined on the entire hyperbolic plane which are \emph{highly symmetric}? 
\end{question}

There are infinitely many dually flat structure on the hyperbolic plane, though there are no other complete examples whose tube domains have constant holomorphic sectional curvature. Still, it would be of interest to find other examples with large automorphism group or which give rise to a K\"ahler-Einstein metric.

\section{Acknowledgements} 
This projected is supported by DARPA/ARO Grant W911NF-16-1-0383 (``Information Geometry: Geometrization of Science of Information'') and AFOSR Grant FA9550-19-1-0213 (``Brain-Inspired Networks for Multifunctional Intelligent Systems and Aerial Vehicles''), which also supported the first author when he was at the University of Michigan. The first author is currently partially supported by a Simons Collaboration Grant 849022 (``K\"ahler-Ricci flow and optimal transport"). We thank Fangyang Zheng for some helpful conversations on the curvature of K\"ahler manifolds. We also thank Teng Fei for providing comments related to T-duality.

\bibliography{bibfile}

\newcommand{\etalchar}[1]{$^{#1}$}
\begin{thebibliography}{AJVLS15}

\bibitem[AA14]{amari2014curvature}
Shun-ichi Amari and John Armstrong.
\newblock Curvature of {H}essian manifolds.
\newblock {\em Differential Geometry and its Applications}, 33:1--12, 2014.

\bibitem[AC09]{alekseevsky2009geometric}
Dmitri~V Alekseevsky and Vicente Cort{\'e}s.
\newblock Geometric construction of the r-map: from affine special real to
  special {K}{\"a}hler manifolds.
\newblock {\em Communications in Mathematical Physics}, 291(2):579--590, 2009.

\bibitem[AJVLS15]{ay2015information}
Nihat Ay, J{\"u}rgen Jost, H{\^o}ng V{\^a}n~L{\^e}, and Lorenz
  Schwachh{\"o}fer.
\newblock Information geometry and sufficient statistics.
\newblock {\em Probability Theory and Related Fields}, 162(1-2):327--364, 2015.

\bibitem[Ama80]{amari1980theory}
Shun-ichi Amari.
\newblock Theory of information spaces: A differential geometrical foundation
  of statistics.
\newblock {\em Post RAAG Reports}, 1980.

\bibitem[AN00]{amari2000methods}
S~Amari and H~Nagaoka.
\newblock Methods of information geometry, transl. math, 2000.

\bibitem[And79]{andrianov1979modular}
Anatolii~N Andrianov.
\newblock Modular descent and the {S}aito-{K}urokawa conjecture.
\newblock {\em Inventiones mathematicae}, 53(3):267--280, 1979.

\bibitem[AT02]{ay2002dually}
Nihat Ay and Wilderich Tuschmann.
\newblock Dually flat manifolds and global information geometry.
\newblock {\em Open Systems \& Information Dynamics}, 9(2):195--200, 2002.

\bibitem[Bal06]{ballmann2006lectures}
Werner Ballmann.
\newblock {\em Lectures on {K}{\"a}hler manifolds}, volume~2.
\newblock European mathematical society, 2006.

\bibitem[BL13]{birkenhake2013complex}
Christina Birkenhake and Herbert Lange.
\newblock {\em Complex abelian varieties}, volume 302.
\newblock Springer Science \& Business Media, 2013.

\bibitem[Bry]{Bryant2013Hessiantype}
Robert Bryant.
\newblock Answer to ``when a {R}iemannian manifold is of {H}essian {T}yp".
\newblock MathOverflow.
\newblock URL: https://mathoverflow.net/q/122319 (version: 2013-02-19).

\bibitem[Cao85]{cao1985deformation}
H-D Cao.
\newblock Deformation of {K}\"ahler metrics to {K}\"ahler-{E}instein metrics on
  compact {K}\"ahler manifolds.
\newblock {\em Invent. Math.}, 81:359--372, 1985.

\bibitem[Cao92]{cao1992harnack}
Huai-Dong Cao.
\newblock On {H}arnack's inequalities for the {K}{\"a}hler-{R}icci flow.
\newblock {\em Inventiones mathematicae}, 109(1):247--263, 1992.

\bibitem[Cen00]{cencov2000statistical}
Nikolai~Nikolaevich Cencov.
\newblock {\em Statistical decision rules and optimal inference}.
\newblock Number~53. American Mathematical Soc., 2000.

\bibitem[CK99]{cox1999mirror}
David~A Cox and Sheldon Katz.
\newblock {\em Mirror symmetry and algebraic geometry}.
\newblock Number~68. American Mathematical Soc., 1999.

\bibitem[CMZ09]{calin2009generalizations}
Ovidiu Calin, Hiroshi Matsuzoe, and Jun Zhang.
\newblock Generalizations of conjugate connections.
\newblock {\em Trends in Differential Geometry, Complex Analysis and
  Mathematical Physics}, pages 26--34, 2009.

\bibitem[CSS15]{costa2015fisher}
Sueli~IR Costa, Sandra~A Santos, and Joao~E Strapasson.
\newblock Fisher information distance: A geometrical reading.
\newblock {\em Discrete Applied Mathematics}, 197:59--69, 2015.

\bibitem[CY77]{ChengYau}
Shiu-Yuen Cheng and Shing-Tung Yau.
\newblock On the regularity of the {M}onge-{A}mp{\`e}re equation $\det
  \frac{\partial^2 u}{\partial x^i \partial x^j}= f (x, u)$.
\newblock {\em Communications on Pure and Applied Mathematics}, 30(1):41--68,
  1977.

\bibitem[Dom62]{dombrowski1962geometry}
Peter Dombrowski.
\newblock On the {G}eometry of the {T}angent {B}undle.
\newblock {\em Journal f{\"u}r die reine und angewandte Mathematik},
  1962(210):73--88, 1962.

\bibitem[Dub96]{dubrovin1996geometry}
Boris Dubrovin.
\newblock Geometry of 2d topological field theories.
\newblock In {\em Integrable systems and quantum groups}, pages 120--348.
  Springer, 1996.

\bibitem[EMS18]{ezhov2018classification}
Vladimir Ezhov, Alexandr Medvedev, and Gerd Schmalz.
\newblock Classification of homogeneous affine tube domains with large
  automorphism groups in arbitrary dimensions.
\newblock {\em arXiv preprint arXiv:1804.02326}, 2018.

\bibitem[FP19a]{fei2019unification}
Teng Fei and Duong~H Phong.
\newblock Unification of the {K\"a}hler-{R}icci and {A}nomaly flows.
\newblock {\em arXiv preprint arXiv:1905.02274}, 2019.

\bibitem[FP19b]{fei2019anomaly}
Teng Fei and Sebastien Picard.
\newblock Anomaly flow and {T}-duality.
\newblock {\em arXiv preprint arXiv:1903.08768}, 2019.

\bibitem[FW97]{fu1997strictly}
Siqi Fu and Bun Wong.
\newblock On strictly pseudoconvex domains with k{\"a}hler-einstein bergman
  metrics.
\newblock {\em Mathematical Research Letters}, 4(5):697--703, 1997.

\bibitem[FZ17]{fei2017interaction}
Teng Fei and Jun Zhang.
\newblock Interaction of {C}odazzi couplings with (para-) k{\"a}hler geometry.
\newblock {\em Results in Mathematics}, 72(4):2037--2056, 2017.

\bibitem[GF16]{garcia2016lectures}
Mario Garcia-Fernandez.
\newblock Lectures on the {S}trominger system.
\newblock {\em arXiv preprint arXiv:1609.02615}, 2016.

\bibitem[Ham82]{hamilton1982three}
Richard~S Hamilton.
\newblock Three-manifolds with positive {R}icci curvature.
\newblock {\em Journal of Differential Geometry}, 17(2):255--306, 1982.

\bibitem[Ham86]{hamilton1986four}
Richard~S Hamilton.
\newblock Four-manifolds with positive curvature operator.
\newblock {\em Journal of Differential Geometry}, 24(2):153--179, 1986.

\bibitem[HM19]{henmi2019statistical}
Masayuki Henmi and Hiroshi Matsuzoe.
\newblock Statistical manifolds admitting torsion and partially flat spaces.
\newblock In {\em Geometric Structures of Information}, pages 37--50. Springer,
  2019.

\bibitem[HX16]{huang2016remark}
Xiaojun Huang and Ming Xiao.
\newblock A remark on {B}ergman-{E}instein metrics.
\newblock {\em arXiv preprint arXiv:1604.07065}, 2016.

\bibitem[Isa04]{isaev2004characterization}
AV~Isaev.
\newblock Characterization of the unit ball in $\mathbb{C}^n$ among complex
  manifolds of dimension n.
\newblock {\em The Journal of Geometric Analysis}, 14(4):697--700, 2004.

\bibitem[Kha20]{MTWnotebook}
Gabriel Khan.
\newblock The {C}urvature of {S}tatistical mirrors.
\newblock Mathematica notebook, 2020.
\newblock
  https://www.wolframcloud.com/obj/gabekhan/Published/Curvature\%20of\%20Statistical\%20mirrors.nb.

\bibitem[Kit99]{kito1999hessian}
Hideyuki Kito.
\newblock On {H}essian structures on the {E}uclidean space and the hyperbolic
  space.
\newblock {\em Osaka Journal of Mathematics}, 36(1):51--62, 1999.

\bibitem[Kon95]{kontsevich1995homological}
Maxim Kontsevich.
\newblock Homological algebra of mirror symmetry.
\newblock In {\em Proceedings of the international congress of mathematicians},
  pages 120--139. Springer, 1995.

\bibitem[Kur07]{kurose2007statistical}
Takashi Kurose.
\newblock Statistical manifolds admitting torsion.
\newblock {\em Geometry and Something, Fukuoka University}, 2007.

\bibitem[KZ20a]{khan2020kahler}
Gabriel Khan and Jun Zhang.
\newblock The {K}{\"a}hler geometry of certain optimal transport problems.
\newblock {\em Pure and Applied Analysis}, 2(2):397--426, 2020.

\bibitem[KZ20b]{khan2020krflow}
Gabriel Khan and Fangyang Zheng.
\newblock {K}\"ahler-{R}icci flow preserves negative anti-bisectional
  curvature.
\newblock {\em arXiv preprint arXiv:2011.07181}, 2020.

\bibitem[KZZ20]{khan2020geometry}
Gabriel Khan, Jun Zhang, and Fangyang Zheng.
\newblock The geometry of positively curved {K}{\"a}hler metrics on tube
  domains.
\newblock {\em arXiv preprint arXiv:2001.06155}, 2020.

\bibitem[Lau87]{lauritzen1987statistical}
Stefan~L Lauritzen.
\newblock Statistical manifolds.
\newblock {\em Differential geometry in statistical inference}, 10:163--216,
  1987.

\bibitem[Leu00]{leung2000mirror}
Naichung~Conan Leung.
\newblock Mirror symmetry without corrections.
\newblock {\em arXiv preprint math/0009235}, 2000.

\bibitem[Maa79]{maass1979spezialschar}
Hans Maass.
\newblock {\"U}ber eine {S}pezialschar von {M}odulformen zweiten {G}rades.
\newblock {\em Inventiones mathematicae}, 52(1):95--104, 1979.

\bibitem[Mae21]{maeta2021self}
Shun Maeta.
\newblock Self-similar solutions to the {H}esse flow.
\newblock {\em arXiv preprint arXiv:2101.10251}, 2021.

\bibitem[Mag15]{magri2015wdvv}
Franco Magri.
\newblock {WDVV} equations.
\newblock {\em arXiv preprint arXiv:1510.07950}, 2015.

\bibitem[Man99]{manin1999frobenius}
Yuri~I Manin.
\newblock {\em Frobenius manifolds, quantum cohomology, and moduli spaces},
  volume~47.
\newblock American Mathematical Soc., 1999.

\bibitem[Mat10]{matsuzoe2010statistical}
Hiroshi Matsuzoe.
\newblock Statistical manifolds and affine differential geometry.
\newblock In {\em Probabilistic Approach to Geometry}, pages 303--321.
  Mathematical Society of Japan, 2010.

\bibitem[MM17]{mirghafouri2017long}
Maryam Mirghafouri and Fereshteh Malek.
\newblock Long-time existence of a geometric flow on closed hessian manifolds.
\newblock {\em Journal of Geometry and Physics}, 119:54--65, 2017.

\bibitem[Mok88]{mok1988uniformization}
Ngaiming Mok.
\newblock The uniformization theorem for compact {K}{\"a}hler manifolds of
  nonnegative holomorphic bisectional curvature.
\newblock {\em Journal of Differential Geometry}, 27(2):179--214, 1988.

\bibitem[Mol14]{molitor2014gaussian}
Mathieu Molitor.
\newblock Gaussian distributions, {J}acobi group, and {S}iegel-{J}acobi space.
\newblock {\em Journal of Mathematical Physics}, 55(12):122102, 2014.

\bibitem[Mor97]{morrison1997making}
David~M Morrison.
\newblock Making enumerative predictions by means of mirror symmetry.
\newblock In {\em Mirror Symmetry II}, pages 457--482. International Press,
  Cambridge, 1997.

\bibitem[Per02]{perelman2002entropy}
Grisha Perelman.
\newblock The entropy formula for the {R}icci flow and its geometric
  applications.
\newblock {\em arXiv preprint math/0211159}, 2002.

\bibitem[Per03]{perelman2003ricci}
Grisha Perelman.
\newblock Ricci flow with surgery on three-manifolds.
\newblock {\em arXiv preprint math/0303109}, 2003.

\bibitem[PW16]{pal2016geometry}
Soumik Pal and Ting-Kam~Leonard Wong.
\newblock The geometry of relative arbitrage.
\newblock {\em Mathematics and Financial Economics}, 10(3):263--293, 2016.

\bibitem[PW18]{pal2018exponentially}
Soumik Pal and Ting-Kam~Leonard Wong.
\newblock Exponentially concave functions and a new information geometry.
\newblock {\em The Annals of Probability}, 46(2):1070--1113, 2018.

\bibitem[Rot66]{rothaus1966construction}
OS~Rothaus.
\newblock The construction of homogeneous convex cones.
\newblock {\em Annals of Mathematics}, pages 358--376, 1966.

\bibitem[San96]{sandler1996distance}
Hanna Sandler.
\newblock Distance formulas in complex hyperbolic space.
\newblock 1996.

\bibitem[Sat07]{satoh2007almost}
Hiroyasu Satoh.
\newblock Almost {H}ermitian structures on tangent bundles.
\newblock In {\em Workshop on Diff. Geom}, volume~11, pages 105--118, 2007.

\bibitem[Sha92]{shabat1992introduction}
Boris~Vladimirovich Shabat.
\newblock {\em Introduction to complex analysis: functions of several
  variables}, volume 110.
\newblock American Mathematical Soc., 1992.

\bibitem[Shi95]{shima1995hessian}
Hirohiko Shima.
\newblock Hessian manifolds of constant hessian sectional curvature.
\newblock {\em Journal of the Mathematical Society of Japan}, 47(4):735--753,
  1995.

\bibitem[Shi97]{shi1997ricci}
Wan-Xiong Shi.
\newblock Ricci flow and the uniformization on complete noncompact {K}{\"a}hler
  manifolds.
\newblock {\em Journal of Differential Geometry}, 45(1):94--220, 1997.

\bibitem[Shi00]{shimizu2000classification}
Satoru Shimizu.
\newblock A classification of two-dimensional tube domains.
\newblock {\em American Journal of Mathematics}, 122(6):1289--1308, 2000.

\bibitem[Sko84]{skovgaard1984riemannian}
Lene~Theil Skovgaard.
\newblock A {R}iemannian geometry of the multivariate normal model.
\newblock {\em Scandinavian journal of statistics}, pages 211--223, 1984.

\bibitem[SSTZ96]{strominger1996mirror}
A~Strominger, Y~Shing-Tung, and E~Zaslow.
\newblock Mirror symmetry is {T}-duality.
\newblock {\em Nuclear Physics B}, 479(1-2):243--259, 1996.

\bibitem[Tot04]{totaro2004curvature}
Burt Totaro.
\newblock The curvature of a {H}essian metric.
\newblock {\em International Journal of Mathematics}, 15(04):369--391, 2004.

\bibitem[Yag81]{yagi1981hessian}
Katsumi Yagi.
\newblock On {H}essian structures on an affine manifold.
\newblock In {\em Manifolds and Lie groups}, pages 449--459. Springer, 1981.

\bibitem[Yan82]{yang1982automorphism}
Paul~C Yang.
\newblock Automorphism of tube domains.
\newblock {\em American Journal of Mathematics}, 104(5):1005--1024, 1982.

\bibitem[Yan07]{yang2007invariant}
Jae-Hyun Yang.
\newblock Invariant metrics and {L}aplacians on {S}iegel--{J}acobi space.
\newblock {\em Journal of Number Theory}, 127(1):83--102, 2007.

\bibitem[Yan15]{yang2015geometry}
Jae-Hyun Yang.
\newblock Geometry and {A}rithmetic on the {S}iegel--{J}acobi space.
\newblock In {\em Geometry and Analysis on Manifolds}, pages 275--325.
  Springer, 2015.

\bibitem[Yan19]{yang2019problems}
Jae-Hyun Yang.
\newblock {P}roblems in the {G}eometry of the {S}iegel-{J}acobi space.
\newblock {\em arXiv preprint arXiv:1910.06249}, 2019.

\bibitem[YY16]{yang2016differential}
Jiong Yang and LinSheng Yin.
\newblock Differential operators for {S}iegel-{J}acobi forms.
\newblock {\em Science China Mathematics}, 59(6):1029--1050, 2016.

\bibitem[YYH{\etalchar{+}}13]{yang2013sectional}
Jae-Hyun Yang, Young-Hoon Yong, Su-Na Huh, Jung-Hee Shin, and Gil-Hong Min.
\newblock Sectional survatures of the {S}iegel-{J}acobi space.
\newblock {\em Bulletin of the Korean Mathematical Society}, 50(3):787--799,
  2013.

\bibitem[Zag79]{zagier1979conjecture}
Don Zagier.
\newblock Sur la conjecture de {S}aito-{K}urokawa (d’apres {H.} {M}aass).
\newblock In {\em Seminar on Number Theory, Paris}, volume~80, pages 371--394,
  1979.

\bibitem[ZK20]{zhang2020statistical}
Jun Zhang and Gabriel Khan.
\newblock Statistical mirror symmetry.
\newblock {\em Differential Geometry and its Applications}, 73:101678, 2020.

\end{thebibliography}
\bibliographystyle{alpha}

\end{document}